\documentclass[reqno,a4paper,12pt]{amsart} 
    
    \usepackage{amsmath,amscd,amsfonts,amssymb}
    \usepackage{mathrsfs,dsfont}
    \usepackage{color}
    \usepackage{mathtools}
    \usepackage{hyperref}
    \usepackage{tikz-cd}
    \usepackage[normalem]{ulem}

    \usepackage{tikz}
    \usetikzlibrary{decorations.pathreplacing}
    
    \numberwithin{equation}{section}
    \numberwithin{figure}{section}
    
    \addtolength{\topmargin}{-1cm}
    \addtolength{\textheight}{2cm}
    \addtolength{\hoffset}{-1cm}
    \addtolength{\textwidth}{1.5cm}
    
    \parskip .06in

    \def\R{\mathbb{R}}
    \def\C{\mathbb{C}}
    \def\Q{\mathbb{Q}}
    \def\Z{\mathbb{Z}}
    \def\N{\mathbb{N}}

    \def\one{\mathds{1}}

    \renewcommand\leq{\leqslant}
    \renewcommand\geq{\geqslant}
    
    \renewcommand\hat{\widehat}

    \newcommand{\supp}{\operatorname{supp}}

    \newcommand{\Hom}{\operatorname{Hom}}

    \newcommand{\p}{{\operatorname{p}}}
    \renewcommand{\c}{{\operatorname{c}}}
    \newcommand{\cc}{{(\operatorname{c})}}
    \newcommand{\e}{{(\operatorname{e})}}

    \theoremstyle{plain}
    \newtheorem{thm}{Theorem}[section]
    \newtheorem{theorem}[thm]{Theorem}
    
    \newtheorem{lemma}[thm]{Lemma}
    \newtheorem{corollary}[thm]{Corollary}
    
    \newtheorem{proposition}[thm]{Proposition}
    
    \newtheorem{question}[thm]{Question}
    
    \newtheorem{conjecture}[thm]{Conjecture}
    
    \newtheorem*{claim*}{Claim}

    \theoremstyle{definition}
    \newtheorem{definition}[thm]{Definition}
    \newtheorem*{definition*}{Definition}
    \newtheorem*{remarks*}{Remarks}
    \newtheorem*{remark*}{Remark}
    \newtheorem{remark}[thm]{Remark}
    \newtheorem{example}[thm]{Example}
    \newtheorem{examples}[thm]{Examples}

\begin{document}

	\title{Some variants of the periodic tiling conjecture}

	\author{Rachel Greenfeld}
	\address{Department of Mathematics, Northwestern University, Evanston, IL 60208.}
	\email{rgreenfeld@northwestern.edu}
 
	\author{Terence Tao}
	\address{UCLA Department of Mathematics, Los Angeles, CA 90095-1555.}
	\email{tao@math.ucla.edu}

	\subjclass{52C23, 03B25, 37B52}
	\date{}
	
	\keywords{}
	
	\begin{abstract}  The periodic tiling conjecture (PTC) asserts, for a finitely generated Abelian group $G$ and a finite subset $F$ of $G$, that if there is a set $A$ that solves the \emph{tiling equation} $\one_F * \one_A = 1$, there is also a periodic solution $\one_{A_{\p}}$.  This conjecture is known to hold for some groups $G$ and fail for others.  In this paper we establish three variants of the PTC.  The first (due to Tim Austin) replaces the constant function $1$ on the right hand side of the tiling equation by $0$, and the indicator functions $\one_F$ and $\one_A$ by bounded integer-valued functions.  The second, which applies in $G=\Z^2$, replaces the right hand side of the tiling equation by an integer-valued periodic function, and the functions $\one_F$ and $\one_A$ on the left hand side by bounded integer-valued functions.  The third (which is the most difficult to establish) is similar to the second, but retains the property of both $\one_A$ and $\one_{A_{\p}}$ being indicator functions; in particular, we establish the PTC  for multi-tilings in $G=\Z^2$.  As a result, we obtain the decidability of constant-level integer tilings in any finitely generated Abelian group $G$ and multi-tilings in $G=\Z^2$.
 \end{abstract}
	
	\maketitle
	
\section{Introduction}

Let $G = (G,+)$ be a finitely generated Abelian group, which we endow with the discrete topology; thus $G$ is isomorphic to $\Z^d \times H$ for some rank (or dimension) $d \geq 0$ and a finite Abelian group $H$.  We let $\ell^\infty(G, \C)$ denote the usual Banach space of bounded functions $f \colon G \to \C$; in particular, the indicator function $\one_A$ of any subset $A$ of $G$ lies in $\ell^\infty(G,\C)$.  Inside this space, we isolate two subspaces: the subspace $\ell^\infty(G, \C)_{\c}$ of functions of finite (or equivalently, compact) support; and the subspace $\ell^\infty(G, \C)_{\p}$ of functions $g \in \ell^\infty(G,\C)$ that are periodic in the sense that they are $\Gamma$-periodic (i.e., $g(x+h)=g(x)$ for all $x \in G$ and $h \in \Gamma$) for some lattice\footnote{In this discrete context, a lattice is simply a finite index subgroup of $G$.} $\Gamma$ of $G$.  Given $f \in \ell^\infty(G, \C)_{\c}$ and $g \in \ell^\infty(G, \C)$, we define the convolution $f*g \in \ell^\infty(G, \C)$ by the usual formula
$$ f*g(x) \coloneqq \sum_{y \in G} f(y) g(x-y),$$
noting that only finitely many summands are non-zero.  It is clear that this makes $\ell^\infty(G,\C)_{\c}$ a commutative complex algebra (indeed it is isomorphic to the group algebra $\C[G]$), that $\ell^\infty(G,\C)$ is a 
$\ell^\infty(G,\C)_{\c}$-module, and $\ell^\infty(G,\C)_{\p}$ is a $\ell^\infty(G,\C)_{\c}$-submodule of $\ell^\infty(G,\C)$.  We also define the subgroup $\ell^\infty(G,\Z)$ of $\ell^\infty(G,\C)$ consisting of bounded integer-valued functions on $G$, and define  $\ell^\infty(G, \Z)_{\c}$ and 
$\ell^\infty(G, \Z)_{\p}$ similarly.  Thus $\ell^\infty(G, \Z)_{\c}$ is isomorphic as a commutative ring to $\Z[G]$, $\ell^\infty(G,\Z)$ is a $\ell^\infty(G, \Z)_{\c}$-module, and $\ell^\infty(G, \Z)_{\p}$ is a $\ell^\infty(G, \Z)_{\c}$-submodule of $\ell^\infty(G,\Z)$.  Similarly for the integers $\Z$ replaced by the reals $\R$ or the rationals $\Q$; see Figure \ref{fig:inclusions}.

\begin{figure}
    \centering
\[\begin{tikzcd}
	{\ell^\infty(G,{\mathbb Z})_{\c}} && {\ell^\infty(G,{\mathbb Q})_{\c}} && {\ell^\infty(G,{\mathbb R})_{\c}} && {\ell^\infty(G,{\mathbb C})_{\c}} \\
	\\
	{\ell^\infty(G,{\mathbb Z})} && {\ell^\infty(G,{\mathbb Q})} && {\ell^\infty(G,{\mathbb R})} && {\ell^\infty(G,{\mathbb C})} \\
	\\
	{\ell^\infty(G,{\mathbb Z})_{\p}} && {\ell^\infty(G,{\mathbb Q})_{\p}} && {\ell^\infty(G,{\mathbb R})_{\p}} && {\ell^\infty(G,{\mathbb C})_{\p}}
	\arrow[hook, from=1-1, to=1-3]
	\arrow[hook, from=1-1, to=3-1]
	\arrow[hook, from=1-3, to=1-5]
	\arrow[hook, from=1-3, to=3-3]
	\arrow[hook, from=1-5, to=1-7]
	\arrow[hook, from=1-5, to=3-5]
	\arrow[hook, from=1-7, to=3-7]
	\arrow[hook, from=3-1, to=3-3]
	\arrow[hook, from=3-3, to=3-5]
	\arrow[hook, from=3-5, to=3-7]
	\arrow[hook, from=5-1, to=3-1]
	\arrow[hook, from=5-1, to=5-3]
	\arrow[hook, from=5-3, to=3-3]
	\arrow[hook, from=5-3, to=5-5]
	\arrow[hook, from=5-5, to=3-5]
	\arrow[hook, from=5-5, to=5-7]
	\arrow[hook, from=5-7, to=3-7]
\end{tikzcd}\]
    \caption{Inclusions between the various Abelian groups of bounded functions on $G$ studied in this paper.}
    \label{fig:inclusions}
\end{figure}

We will be interested in solving the equation
$$ f * a = g$$
for a given $f \in \ell^\infty(G,\Z)_{\c}$ and $g \in \ell^\infty(G,\Z)_{\p}$, and some $a$ that will lie in $\ell^\infty(G,\Z)$ and will often have additional constraints imposed, such as being periodic or being an indicator function $\one_A$.  A motivating problem in this area is the \emph{periodic tiling conjecture} (PTC), which we phrase using the above notation as follows:

\begin{conjecture}[Periodic tiling conjecture]\label{ptc}  Let $G$ be a finitely generated Abelian group, let $\one_F \in \ell^\infty(G,\Z)_{\c}$ be an indicator function in $\ell^\infty(G,\Z)_{\c}$, and suppose that the equation $\one_F * \one_A = 1$ has a solution $\one_A$ that is an indicator function in $\ell^\infty(G,\Z)$.  Then there also exists a solution $\one_{A_{\p}}$ to the equation $\one_F * \one_{A_{\p}} = 1$ that is an indicator function in $\ell^\infty(G,\Z)_{\p}$ (i.e., $A_{\p}$ is a periodic subset of $G$).
\end{conjecture}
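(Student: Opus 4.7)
The plan is to view the tiling equation $\one_F * \one_A = 1$ as encoding a $G$-dynamical system and then extract a periodic point from it. Let $\Omega_F \subseteq \{0,1\}^G$ denote the set of all indicator functions $\one_A$ satisfying the equation, equipped with the product topology; this is a compact metrizable space on which $G$ acts by translation, and the hypothesis is precisely that $\Omega_F$ is non-empty. A solution $\one_{A_{\p}}$ is periodic if and only if its $G$-orbit in $\Omega_F$ is finite, so the conjecture reduces to exhibiting a finite orbit inside $\Omega_F$.

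First I would reduce to the case $G = \Z^d$ by writing $G \cong \Z^d \times H$ with $H$ finite and lifting the tiling equation to $\Z^d$ in the natural way (replacing $F$ by its image union over the $H$-cosets and enlarging the target constant accordingly). Next, pick a minimal closed $\Z^d$-invariant subsystem $X \subseteq \Omega_F$ via Zorn's lemma, together with an ergodic invariant Borel probability measure $\mu$ on $X$ supplied by Krylov--Bogolyubov. The goal is then to show that $(X,\mu,\Z^d)$ must have a discrete, rational spectrum, which by Halmos--von Neumann would exhibit it as a rotation on a finite group, and thereby produce a periodic point in $X$. The algebraic input is that the Fourier support of any $\mu$-generic $\one_A$ must be contained in the zero set of $\widehat{\one_F}$ outside the point $0$, forcing the spectral measure of $(X,\mu)$ to sit on a prescribed real algebraic subvariety of the dual torus. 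In dimension one this immediately forces periodicity by a pigeonhole argument on $F$; in dimension two one may, following Bhattacharya, combine this spectral constraint with the Host--Kra structure theorem to rule out any Kronecker-irrational component, upgrading ergodicity to periodicity.

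The main obstacle is that this entire strategy must break in high enough dimension. In $\Z^d$ for $d$ sufficiently large, the tiling dynamical system $\Omega_F$ can admit minimal subsystems of positive topological entropy with no periodic points at all, so the spectral/structural arguments that succeed in dimensions one and two have no analogue. Concretely, the spectral variety cut out by $\widehat{\one_F} = 0$ need not decompose into rational subtori once $d \geq 3$, and the algebraic obstructions encoded in $F$ are too weak to propagate through the Host--Kra tower to force periodicity of the Kronecker factor. Any attempted proof that ignores this genuine rigidity failure is bound to collapse at the step where one passes from "$(X,\mu)$ has rational discrete spectrum" to "$X$ contains a finite orbit". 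For this reason I would not attempt to prove Conjecture \ref{ptc} in the stated generality; instead I would either restrict the underlying group $G$ (e.g.\ to $\Z^2$, where the above sketch can be made to work) or relax one of the structural constraints on $F$, $A$, or the right-hand side, which is precisely the route pursued in the remainder of this paper.
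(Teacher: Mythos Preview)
Your assessment is essentially correct in its bottom line: Conjecture~\ref{ptc} is stated in the paper as a conjecture, not a theorem, and the paper does not attempt to prove it. On the contrary, the paper explicitly records (citing \cite{GT22}) that the conjecture is \emph{false} for $G$ of sufficiently high rank. So there is no ``paper's own proof'' to compare against, and your decision not to attempt a proof in the stated generality is the right one.

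That said, a couple of points in your sketch are inaccurate and would not survive if you tried to make them precise. First, the reduction from $G \cong \Z^d \times H$ to $\Z^d$ does not work as you describe: projecting $F$ and $A$ to $\Z^d$ and ``enlarging the target constant'' does not produce an equivalent level-one tiling problem on $\Z^d$, and in fact the PTC for $\Z^d \times H$ is genuinely a different statement from the PTC for $\Z^d$ (the paper handles the rank-one case $\Z \times H$ separately, and the counterexamples in \cite{GT22} live in groups of the form $\Z^2 \times H$ before being lifted). Second, Bhattacharya's proof for $\Z^2$ does not invoke the Host--Kra structure theorem; it proceeds via a dilation lemma and a spectral/ergodic argument specific to two commuting actions, closer in spirit to the structure theorem recorded in this paper as Theorem~\ref{structure-thm-2}. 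Your heuristic that the obstruction in high dimensions is the failure of the zero set of $\widehat{\one_F}$ to decompose into rational subtori is in the right direction, but the actual counterexamples are built by combinatorial encoding of aperiodic tiling systems rather than by exhibiting exotic spectral measures.
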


Informally, the PTC suggests that if a finite set $F$ tiles $G$ by translations, then it also tiles $G$ periodically by translations.  

 The following partial results supporting Conjecture \ref{ptc} (or its Euclidean space version) are known:

\begin{itemize}
\item Conjecture \ref{ptc} is trivial when $G$ is a finite Abelian group, since in this case all subsets of $G$ are periodic.
\item  Conjecture  \ref{ptc}  was established for  $G=\Z$ and $G=\R$ \cite{N,LM,LW}.  
\item Conjecture \ref{ptc} holds for $G = \Z \times G_0$, where $G_0$ is any  finite Abelian group  \cite[Section 2]{GT23}. 
\item For $G=\Z^2$, Conjecture \ref{ptc}  was established by Bhattacharya \cite{BH} using ergodic theory methods.  In  \cite{GT21} we gave an alternative proof of this result, and  furthermore showed that every  tiling in $\Z^2$ by a single tile is \textit{weakly periodic}\footnote{A finite disjoint union of singly-periodic sets.}. 
\item When $G=\R^2$, Conjecture \ref{ptc} is known to hold for any tile that is a topological disk  \cite{bn,gbn,ken,err}.
\item In \cite{dggm}, Conjecture \ref{ptc} was also established for all rational polygonal sets in $\R^2$. 
\item Conjecture \ref{ptc} is known to be true for convex tiles in all dimensions  \cite{V,M}.
\item Conjecture \ref{ptc} is known to hold in $\Z^d$ when the size $|F|$ of $F$ is prime or equal to $4$ \cite{szegedy}. 
\item In \cite{bgu}, it was shown that the  periodic tiling conjecture in $\Z^d$  implies the  periodic tiling conjecture in every quotient group $\Z^d/\Lambda$.
\item In \cite{GT22}, we showed that the  periodic tiling conjecture in $\R^d$  implies the  periodic tiling conjecture in $\Z^d$.
\end{itemize}

Despite these positive evidences towards Conjecture \ref{ptc}, we recently established that the conjecture is false in sufficiently high dimensions \cite{GT22}. Moreover, in \cite{gk}, Kolountzakis and the first author showed that for sufficiently large $d$, there is a \emph{connected} (and open) counterexample to the periodic tiling conjecture in $\R^d$.

In this paper we establish some variants of the PTC.  We begin with an analysis of the equation\footnote{We remark that the equation $f*a=0$ plays a crucial role in the study of Nivat's conjecture \cite{Nivat} on the structure of low complexity configurations; see, for instance \cite{KS, KS20}.} $f*a=0$, where $a$ is not required to be an indicator function.  Since we always have the degenerate solution $f*0=0$ to this equation, we will restrict attention to $a$ that are not identically zero. To motivate the result, let us first recall a simple application of the Fourier transform:

\begin{theorem}[Solving $f*a=0$ in the complex numbers]\label{main-complex}  Let $G$ be a finitely generated Abelian group, and let $f \in \ell^\infty(G,\C)_{\c}$.  Then the following are equivalent:
\begin{itemize}
    \item[(i)]  One has $f*a=0$ for some $a \in \ell^\infty(G,\C) \backslash \{0\}$.
    \item[(ii)]  There exists a character $\chi \in \hat G$ (that is, a homomorphism $\chi \colon G \to S^1$ to the complex unit circle) such that the Fourier coefficient $\hat f(\chi) \coloneqq \sum_{x \in G} f(x) \overline{\chi(x)}$ vanishes.
\end{itemize}
\end{theorem}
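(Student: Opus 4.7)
The plan is to prove the two implications separately. The direction (ii) $\Rightarrow$ (i) is immediate: given a character $\chi \in \hat G$ with $\hat f(\chi) = 0$, take $a = \chi$, viewed as a bounded function $G \to S^1 \subset \C$. Using $\chi(x-y) = \chi(x)\overline{\chi(y)}$ one computes
$$ (f*\chi)(x) \;=\; \sum_{y \in G} f(y)\chi(x-y) \;=\; \chi(x)\, \hat f(\chi) \;=\; 0, $$
and $\chi$ is nonzero since $|\chi| \equiv 1$.

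For (i) $\Rightarrow$ (ii) I would argue by contrapositive. Suppose $\hat f(\chi) \neq 0$ for every $\chi \in \hat G$. Since $G \cong \Z^d \times H$ is finitely generated and Abelian, its Pontryagin dual $\hat G \cong \T^d \times \hat H$ is compact, and $\hat f$ is continuous on $\hat G$ (in fact a trigonometric polynomial along $\T^d$, since $f$ has finite support). By compactness, $|\hat f|$ is bounded below by some $\delta > 0$, so $1/\hat f$ is also continuous and is smooth in the $\T^d$-directions. The key step is to produce $g \in \ell^1(G,\C)$ with $\hat g = 1/\hat f$; this is Wiener's inversion theorem on the discrete LCA group $G$. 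In our setting it can be proved by elementary Fourier analysis: smoothness of $1/\hat f$ on $\T^d$ makes its Fourier coefficients decay faster than any polynomial (each partial derivative stays bounded, hence in $L^2(\hat G)$), giving absolute summability and hence $g \in \ell^1(G,\C)$. The identity $\hat g\hat f = 1$ on $\hat G$ translates (by the Fourier inversion formula on the compact group $\hat G$) into $f*g = \delta_0$, where $\delta_0 = \one_{\{0\}}$ is the convolution identity. Now if $a \in \ell^\infty(G,\C)$ satisfies $f*a = 0$, associativity of the convolution pairings $\ell^1 \times \ell^1 \to \ell^1$ and $\ell^1 \times \ell^\infty \to \ell^\infty$ yields
$$ a \;=\; \delta_0 * a \;=\; (g*f)*a \;=\; g*(f*a) \;=\; 0, $$
contradicting the hypothesis in (i).

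The only real obstacle is the invocation of Wiener's inversion theorem; as noted, it can be verified by hand here because $\hat f$ is a trigonometric polynomial and $\hat G$ has the simple form $\T^d \times \hat H$ with $\hat H$ finite. Alternatively, one may bypass Wiener entirely with a Gelfand-theoretic argument: $I \coloneqq \{g \in \ell^1(G,\C) : g*a = 0\}$ is a closed ideal in the commutative unital Banach algebra $\ell^1(G,\C)$, proper whenever $a \neq 0$ (since $\delta_0 \notin I$), hence contained in some maximal ideal. The maximal ideals of $\ell^1$ of a discrete Abelian group correspond bijectively to characters in $\hat G$ via the Fourier transform, so $I$ lies in the kernel of some evaluation $g \mapsto \hat g(\chi)$; applying this to $f \in I$ gives $\hat f(\chi) = 0$.
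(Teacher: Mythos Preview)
Your proof is correct and follows essentially the same route as the paper's: both directions agree, with (ii)$\Rightarrow$(i) via $a=\chi$ and (i)$\Rightarrow$(ii) by inverting the nowhere-vanishing smooth function $\hat f$ on the compact dual $\hat G \cong \T^d \times \hat H$ to obtain an $\ell^1$ convolution inverse of $f$, then deducing $a=0$. The Gelfand-theoretic alternative you sketch is a nice bonus not in the paper, but the primary argument is the same.
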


\begin{proof} If (ii) holds, then $f * \chi = 0$, so (i) holds as well.  Conversely, if (ii) fails, then $\hat f$ is a nowhere vanishing trigonometric polynomial on $\hat G$ (which is the product of a torus and a finite Abelian group), hence it has a smooth inverse $1/{\hat f}$, which is then the Fourier transform of some absolutely integrable\footnote{This also follows from Wiener's $1/f$ theorem, although here the situation is much simpler as the Fourier transform is known to be smooth.} function $F$ by the Fourier inversion formula.  If $f*a=0$ for some $a \in \ell^\infty(G,\C)$, then by convolving both sides with $F$ and using the distributional Fourier transform we conclude that $a=0$, so (i) fails as well.
\end{proof}

In Section \ref{integer-sec} we provide an argument, due to Tim Austin \cite{T23}, establishing an analogous statement for the integers:

\begin{theorem}[Solving $f*a=0$ in the integers]\label{main-1}  Let $G$ be a finitely generated Abelian group, and let $f \in \ell^\infty(G,\Z)_{\c}$.  Then the following are equivalent:
\begin{itemize}
    \item[(i)]  One has $f*a=0$ for some $a \in \ell^\infty(G,\Z) \backslash \{0\}$.
    \item[(ii)]  One has $f*a_{\p}=0$ for some $a_{\p} \in \ell^\infty(G,\Z)_{\p} \backslash \{0\}$.
    \item[(iii)]  There exists a character $\chi \in \hat G$ that is of finite order (thus $\chi^m=1$ for some positive integer $m$) and such that the Fourier coefficient $\hat f(\chi)$ vanishes.
\end{itemize}
\end{theorem}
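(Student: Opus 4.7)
The implication (ii) $\Rightarrow$ (i) is immediate. For (iii) $\Rightarrow$ (ii), given a character $\chi \in \hat G$ of finite order $m$ with $\hat f(\chi) = 0$, I would take the Galois trace
\[ a_{\p} := \sum_{\sigma \in \mathrm{Gal}(\Q(\zeta_m)/\Q)} \sigma\chi. \]
Each summand $\sigma\chi$ is a character of $G$ of order dividing $m$, so $a_{\p}$ is $\bigcap_\sigma \ker(\sigma\chi)$-periodic, and Galois invariance makes it integer-valued with $a_{\p}(0) = [\Q(\zeta_m):\Q] > 0$, so $a_{\p}$ is nontrivial. Because $f$ has integer coefficients, $\hat f(\sigma\chi) = \sigma(\hat f(\chi)) = 0$ for each $\sigma$, and therefore $f * (\sigma\chi) = \hat f(\sigma\chi)(\sigma\chi) = 0$; summing yields $f * a_{\p} = 0$.

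The difficult direction is (i) $\Rightarrow$ (iii). Given $a \in \ell^\infty(G,\Z) \setminus \{0\}$ with $f * a = 0$, my plan is to attach to $a$ the annihilator ideal $\operatorname{Ann}(a) := \{r \in \ell^\infty(G,\Z)_{\c} : r * a = 0\}$ inside the group ring $\ell^\infty(G,\Z)_{\c} \cong \Z[G]$; this ideal is proper because $a \neq 0$, and it contains $f$ by assumption. The quotient $R := \Z[G]/\operatorname{Ann}(a)$ is then a nonzero finitely generated commutative $\Z$-algebra, and the map $r \mapsto r * a$ realizes $R$ as a $\Z[G]$-submodule of $\ell^\infty(G,\Z)$ on which the shifts $T_g$ act as sup-norm isometries --- a structural feature I expect to be essential below.

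To produce a torsion zero of $\hat f$, I would argue by reduction modulo primes. For every prime $p$ not dividing the $\gcd$ of the nonzero values of $a$ (cofinitely many $p$), $\bar a := a \bmod p$ is a nonzero element of $\mathbb{F}_p^G$ annihilated by $\bar f$, so $R_p := \mathbb{F}_p[G]/\operatorname{Ann}_{\mathbb{F}_p[G]}(\bar a)$ is a nonzero finitely generated $\mathbb{F}_p$-algebra. By Hilbert's Nullstellensatz, $R_p$ surjects onto some $\mathbb{F}_{p^n}$, producing a character $\bar\chi_p \colon G \to \mathbb{F}_{p^n}^{\times}$ automatically of finite order dividing $p^n - 1$, with $\hat{\bar f}(\bar\chi_p^{-1}) = 0$ in $\mathbb{F}_{p^n}$. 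Lifting $\bar\chi_p$ to a torsion character $\chi_p$ over $\overline{\Q}$ of the same order via a compatible identification $\mu_{m_p}(\mathbb{F}_{p^n}) \cong \mu_{m_p}(\C)$ yields $\hat f(\chi_p^{-1}) \in \Z[\zeta_{m_p}]$ vanishing modulo a prime above $p$.

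The main obstacle is upgrading these reductions-mod-$p$ vanishings to an honest equality $\hat f(\chi^{-1}) = 0$ in $\C$ at a single torsion $\chi$, since a priori the orders $m_p$ could grow unboundedly in $p$ and each $\chi_p$ only witnesses vanishing modulo one prime. The resolution I envisage exploits the $\ell^\infty$-isometric action of $G$ on $R$: the isometry of the shifts forces every continuous $\C$-algebra homomorphism $R \otimes \C \to \C$ to be \emph{unitary} (i.e.\ $|\chi([g])| = 1$ for each $g \in G$), because for any $v \in R$ with $\chi(v) \neq 0$ one has $|\chi([g])|^n |\chi(v)| = |\chi(T_g^n v)| \leq C\|T_g^n v\|_\infty = C\|v\|_\infty$ for all $n$. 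Combining this with the algebraicity of $\chi([g])$ inherited from the finite generation of $R$ as a $\Z$-algebra, and applying a Kronecker-type argument on algebraic numbers on the unit circle (handling the non-monic nature of $\hat f$ by an appropriate scaling or passing to a normalization where the Kronecker-Lehmer theorem applies), should identify at least one $\C$-point of $\operatorname{Spec}(R \otimes \C)$ with every coordinate a root of unity, giving the desired torsion zero of $\hat f$.
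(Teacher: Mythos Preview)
Your Galois-trace argument for (iii) $\Rightarrow$ (ii) is correct and is arguably cleaner than the paper's version, which instead applies an arbitrary $\Q$-linear retraction $\psi \colon \C \to \Q$ to the identity $f * \chi = 0$ and then clears denominators; your construction avoids any appeal to choice.

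The sketch for (i) $\Rightarrow$ (iii), however, has genuine gaps beyond the mod-$p$ detour that you yourself flag as inconclusive. First, the claim that ``$\chi([g])$ is algebraic, inherited from the finite generation of $R$ as a $\Z$-algebra'' is false as stated: $\Z[x]$ is finitely generated over $\Z$ yet has transcendental $\C$-points. In the present setting, with $G \cong \Z^d \times H$, a single nonzero relation $f \in \mathrm{Ann}(a)$ cuts out a hypersurface in $(\C^\times)^d$, and for $d \geq 2$ there is no a~priori reason for $\mathrm{Ann}(a)$ to force the variety down to dimension zero. Second, your unitarity argument presupposes that $\chi$ is bounded for the sup-norm pulled back from $\ell^\infty(G,\Z)$ --- the inequality $|\chi(T_g^n v)| \leq C\|T_g^n v\|_\infty$ is precisely this assumption --- but an algebraic $\C$-point of $R\otimes\C$ need not be continuous for that norm, and you do not produce one that is. Third, Kronecker's theorem requires the conjugates to be algebraic \emph{integers} on the unit circle; the non-integral example $(3+4i)/5$ shows this is not automatic from unitarity plus algebraicity, and ``an appropriate scaling'' does not dispose of it. (In rank one, one can in fact rule out such non-integral points by showing that the lattice spanned by the windows $(a(n),\dots,a(n+d-1)) \in \Z^d$ is preserved by the shift, forcing the minimal polynomial to lie in $\Z[x]$; but this extra argument is missing, and it does not obviously generalize to higher rank.)

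The paper's route to (i) $\Rightarrow$ (ii) is entirely different: it inducts on the rank $d$ of $G = \Z^d \times H$, powered by a \emph{dilation lemma} (if $f*a=0$ then $(\tau_r f)*a=0$ for all $r$ in a congruence class, where $\tau_r$ dilates the support of $f$). Averaging over $r$ gives a \emph{structure theorem} expressing each slice $f_{v_0}*a$ as a finite sum of functions, each periodic in a single direction $w \in \Z^d$. One then hits $a$ with enough difference operators $\partial_{w_1}\cdots\partial_{w_m}$ to annihilate all these one-directionally periodic pieces. Either some intermediate $\partial_{w_1}\cdots\partial_{w_i} a$ becomes nonzero and periodic in a direction $w_{i+1}$, allowing descent to the rank-$(d{-}1)$ quotient $G/\langle w_{i+1}\rangle$; or else $\tilde a := \partial_{w_1}\cdots\partial_{w_m} a$ is nonzero and annihilated by every slice $f_{v_0}$ separately, in which case periodizing $\tilde a$ off a fundamental domain for $\Z^d$ yields the desired periodic solution directly.
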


Informally, the equivalence of (i) and (ii) asserts that the analogue of PTC for $f*a=0$ holds, if we drop the requirement that $a$ must be an indicator function.

In practice, Theorem \ref{main-1} gives an easy criterion to test whether the equation $f*a=0$ has non-trivial solutions.   We illustrate this with a simple example:

\begin{example}  Let $G = \Z$ and $f \coloneqq 3 \times \one_{\{-1,+1\}} - 2 \times \one_{\{0\}}$.  Then the characters $\chi$ take the form $x \mapsto e^{2\pi i \theta x}$ for some $\theta \in \R/\Z$, and are of finite order precisely when $\theta$ is rational.  Observe that $\hat f(\chi) = 6 \cos(2\pi \theta) - 2$.  Since $\cos^{-1}(\frac{1}{3})$ is irrational ($\frac{1}{3} + \frac{\sqrt{8}}{3}i$ is not an algebraic integer and thus not a root of unity), we conclude that assertion (iii) in Theorem \ref{main-1} does not hold.  In particular, by Theorem \ref{main-1}, there is no solution $a \in \ell^\infty(G,\Z)$ to the equation $f*a=0$ other than the zero function, despite the fact that the Fourier transform $f$ still vanishes at some (irrational) frequencies (which, by Theorem \ref{main-complex}, ensures a non-trivial solution to $f*a=0$ in $\ell^\infty(G,\C)$).
\end{example}

There is a well-known connection between the PTC and the decidability of tilings. Indeed, H. Wang \cite{wang} showed that if the PTC were true, then there is an algorithm that computes for any given finite $F\subset G$ whether $F$ is a tile of $G$ or not. Thus, e.g., from \cite{N}, \cite{GT23} and \cite{BH}, we obtain the decidability of tilings in $\Z$, $\Z\times G_0$ for any finite Abelian group $G_0$, and $\Z^2$. However, in \cite{GT25} we established the undecidability of translational monotilings (when the finitely generated Abelian group $G$ is not fixed, but is part of the input of the algorithm). 

In Section \ref{sec:decidability}, by combining Theorem \ref{main-1} with known results on the structure of vanishing sums of roots of unity, together with Szmielew's theorem \cite{Szmielew} on the decidability of the first-order language of divisible Abelian groups, we prove that the solvability of equations $f*a=0$ in the integers, for $f \in \ell^\infty(G,\Z)_{\c}$ with $G$ being a finitely generated Abelian group, is decidable:

 \begin{corollary}[Solving $f*a=0$ in the integers is decidable]\label{cor:decidability-1}
     There exists an algorithm that computes in finite time, when given
     \begin{itemize}
         \item a finitely generated\footnote{For the purposes of computability, this group should be presented in the standard form $G = \Z^d \times \prod_{n=d+1}^r \Z/N_n\Z$ of a product of a finite number of copies of $\Z$ and cyclic groups.} Abelian group $G$; and
         \item a function $f \in \ell^\infty(G,\Z)_{\c}$,
     \end{itemize}
     whether there exists a non-zero $a\in \ell^\infty(G,\Z)$ such that $f*a=0$.
 \end{corollary}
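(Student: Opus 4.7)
The plan is to combine Theorem \ref{main-1} with the structure theory of vanishing sums of roots of unity and Szmielew's decidability theorem for divisible Abelian groups. By Theorem \ref{main-1}, the existence of a nonzero $a\in\ell^\infty(G,\Z)$ with $f*a=0$ is equivalent to the existence of a finite-order character $\chi\in\hat G$ with $\hat f(\chi)=0$. Since $f$ has finite support, this Fourier vanishing is the condition $\sum_{x\in\supp f}f(x)\overline{\chi(x)}=0$, i.e.\ a vanishing $\Z$-linear combination of at most $N\coloneqq|\supp f|$ roots of unity with prescribed integer coefficients $f(x)$. Identifying finite-order characters $\chi\colon G\to S^1$ with homomorphisms $\phi\colon G\to\Q/\Z$ via $\chi(\cdot)=e^{2\pi i\phi(\cdot)}$, the problem reduces to deciding whether there exists $\phi\in\Hom(G,\Q/\Z)$ such that $\sum_{x\in\supp f}f(x)\,e^{-2\pi i\phi(x)}=0$.

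Next, I would invoke the classical structure theorem for vanishing sums of roots of unity (R\'edei, de Bruijn, Mann, Conway--Jones, Lam--Leung): every such sum with integer coefficients decomposes as a $\Z$-linear combination of elementary ``R\'edei relations'' of the form $\eta(1+\omega_p+\omega_p^2+\cdots+\omega_p^{p-1})=0$, where $p\le N$ is a prime, $\omega_p$ is a primitive $p$-th root of unity, and $\eta$ is an auxiliary root of unity. Since the number of terms is bounded by $N$ and the coefficients $f(x)$ are given, only finitely many combinatorial ``types'' of such decomposition are possible. Each type specifies a partition of the multiset $\supp f$ (counted with the multiplicities encoded by $f(x)$) into blocks of prime size $p\le N$, together with the requirement that the values $\phi(x)$ for $x$ in each block form a coset of $\frac{1}{p}\Z/\Z$ inside $\Q/\Z$.

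For each of the finitely many types, the requirement on $\phi$ becomes a conjunction of linear equations and torsion-order specifications for the images $\phi(x)\in\Q/\Z$. The existence of $\phi\in\Hom(G,\Q/\Z)$ satisfying this system is a first-order sentence in the language of the divisible Abelian group $\Q/\Z$ (using the presentation of $G$ in the form $\Z^d\times\prod_{n}\Z/N_n\Z$ to encode $\phi$ as a tuple of elements of $\Q/\Z$ satisfying the corresponding torsion relations). By Szmielew's theorem on the decidability of the first-order theory of divisible Abelian groups, each such sentence can be decided effectively; taking the disjunction over the finitely many types produces the desired algorithm. The main obstacle is the second step: extracting from the structure theory of vanishing sums of roots of unity a finite, \emph{effective} list of combinatorial types of relations (bookkeeping both positive and negative coefficients $f(x)$ and their multiplicities into the block decomposition) and translating each into a concrete first-order sentence in $\Q/\Z$. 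Once this reduction is carried out, Szmielew's theorem supplies the decidability step automatically.
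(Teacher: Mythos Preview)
Your overall strategy is essentially the same as the paper's: reduce via Theorem~\ref{main-1} to the existence of a finite-order character annihilating $\hat f$, use the structure theory of vanishing sums of roots of unity to bound the order of the roots involved, encode the resulting condition as a first-order sentence in the divisible Abelian group $\Q/\Z$, and invoke Szmielew's theorem. That is exactly the architecture of the paper's proof.

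However, your implementation of the middle step contains an error. You claim that the vanishing sum forces a partition of the multiset $\supp f$ (with multiplicities) into blocks of \emph{prime} size $p\le N$, each block being a coset of $\tfrac{1}{p}\Z/\Z$. This is not true: minimal vanishing sums of roots of unity need not have prime length, and there exist minimal sums of (say) six roots that do not decompose into two triples (see Conway--Jones). The correct statement, which the paper uses, is Mann's theorem: any \emph{minimal} vanishing sum of $n$ roots of unity can, after a common rotation, be taken to have all roots of order dividing $M_n=\prod_{p\le n}p$. One first writes $f=\sum_{j=1}^n e(\epsilon_j)\one_{\{g_j\}}$ with $\epsilon_j\in\{0,\tfrac12\}$ (so that each coefficient is $\pm 1$ and $n=\sum_x|f(x)|$), reducing to a vanishing sum $\sum_j e(\xi_j)=0$ of $n$ plain roots of unity. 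Decomposing into minimal subsums and applying Mann's bound, the predicate ``$(\xi_1,\dots,\xi_n)$ is a vanishing sum'' becomes: there is a partition into pieces, and for each piece a rotation placing it inside a fixed finite computable set of tuples of $M_n$-th roots. This is manifestly first-order in $\Q/\Z$. The remaining predicate---that the $\xi_j$ arise from a single homomorphism $G\to\Q/\Z$---is handled exactly as you indicate, via the presentation of $G$.

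So your plan is right, and the fix is to replace the incorrect prime-block decomposition by Mann's order bound for minimal subsums; once you do this the argument goes through and coincides with the paper's.
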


 \begin{remark} If $f \in \ell^\infty(G,\Z)_{\c}$ has a non-zero sum, i.e., $f*1 \neq 0$, then the solvability of $f*a=0$ for some non-zero $a \in \ell^\infty(G,\Z)$ is clearly equivalent to the solvability of $f*a=k$ for some integer $k$ and some non-constant $a \in \ell^\infty(G,\Z)$, since $f*a=k$ if and only if $f*((f*1)a - k) = 0$.  Similarly, the solvability of $f*a_{\p}=0$ for some non-zero $a_{\p} \in \ell^\infty(G,\Z)_{\p}$ is equivalent to the solvability of $f*a_{\p}=k$ for some integer $k$ and some non-constant $a_{\p} \in \ell^\infty(G,\Z)_{\p}$.  Thus, when $f$ has a non-zero sum, one can add some further equivalences to the three listed in Theorem \ref{main-1}.  Conversely, if $f$ has a zero sum, then by the amenability of $G$ there are no solutions to $f*a=k$ for any non-zero integer $k$ and any $a \in \ell^\infty(G,\C)$.
\end{remark}

In the case of rank one groups $G = \Z \times H$ with $H$ finite, periodicity of solutions to $f*a=g$ is easy to ensure:

\begin{proposition}[Periodicity of integer tilings in rank one groups]\label{ptc-1d}  Let $G = \Z \times H$ for some finite $H$, $f \in \ell^\infty(G,\Z)_{\c} \backslash \{0\}$, and $g \in \ell^\infty(G,\Z)_{\p}$.  If the equation $f*a=g$ has a solution $a$ in $\ell^\infty(G,\Z)$, then it also has a solution $a_{\p}$ in $\ell^\infty(G,\Z)_{\p}$.
\end{proposition}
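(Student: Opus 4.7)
The plan is a finite-state pigeonhole argument that exploits the rank-one structure to view $f*a=g$ as a linear recurrence with bounded state; no appeal to Theorem \ref{main-1} is needed.

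First, write $G = \Z \times H$, fix an integer $L \ge 1$ with $\supp f \subset [-L, L] \times H$, let $T \ge 1$ be a period of $g$, and let $C := \|a\|_\infty < \infty$. Define the \emph{state} of $a$ at $n \in \Z$ to be the tuple $S(n) := (a(n+j, h))_{j \in [-L, L-1],\, h \in H}$, which takes values in the finite set $(\Z \cap [-C,C])^{2L|H|}$. By pigeonhole on the infinite sequence $(S(kT))_{k \ge 0}$, some value is attained infinitely often, and we may choose integers $k_1 < k_2$ with $S(k_1 T) = S(k_2 T)$ and $M := (k_2 - k_1)T > 2L$.

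Second, define $a_{\p} \in \ell^\infty(G, \Z)$ by $a_{\p}(n, h) := a(\tilde{n}, h)$, where $\tilde{n}$ is the unique representative of $n$ modulo $M$ in $[k_1 T, k_2 T - 1]$. Then $a_{\p}$ is periodic with respect to the finite-index subgroup $\Gamma := M\Z \times \{0\}$ of $G$, so $a_{\p} \in \ell^\infty(G, \Z)_{\p}$. It remains to verify $f*a_{\p} = g$. Both sides are $\Gamma$-periodic (noting $T \mid M$), so it suffices to check equality at $(n_0, h_0)$ with $n_0 \in [k_1 T, k_2 T - 1]$. For $n_0$ in the interior range $[k_1 T + L, k_2 T - 1 - L]$, every index $n_0 - m$ with $m \in [-L, L]$ already lies in $[k_1 T, k_2 T - 1]$, so $a_{\p}$ agrees with $a$ at every point of the convolution sum, yielding $(f*a_{\p})(n_0, h_0) = (f*a)(n_0, h_0) = g(n_0, h_0)$. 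When $n_0$ is within $L$ of an endpoint, the ``wrap-around'' values of $a_{\p}$ differ from $a$ by a translation by $\pm M$ in the first coordinate; however, the pigeonhole identity $S(k_1 T) = S(k_2 T)$ gives exactly $a(k_1 T + j, h) = a(k_2 T + j, h)$ for all $j \in [-L, L-1]$ and $h \in H$, which is precisely what is needed to identify $a_{\p}(n_0 - m, h_0 - k)$ with $a(n_0 - m, h_0 - k)$ for every $(m, k) \in \supp f$. In all cases $(f*a_{\p})(n_0, h_0) = (f*a)(n_0, h_0) = g(n_0, h_0)$.

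The main delicacy is the calibration of the window $[-L, L-1]$ in the definition of $S$, chosen so that after requiring $M > 2L$, the single coincidence $S(k_1 T) = S(k_2 T)$ simultaneously handles both the left and right boundary regions; beyond this bookkeeping, the argument is a direct finite-state pigeonhole, and integrality of $f$ and $a$ is used only to ensure that the state space is finite.
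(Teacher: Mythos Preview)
Your proof is correct. The calibration of the state window $[-L,L-1]$ is exactly right: the coincidence $S(k_1T)=S(k_2T)$ covers the range $j\in[-L,L-1]$, and the wrap-around indices at the right boundary land in $j\in[0,L-1]$ while those at the left boundary land in $j\in[-L,-1]$, so a single pigeonhole match handles both ends. The condition $M>2L$ (hence $M\ge 2L+1$) guarantees no point of the convolution window wraps in both directions at once. This is the classical finite-state recurrence argument and needs nothing beyond boundedness and integrality of $a$.

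The paper takes a genuinely different route. Rather than pigeonhole, it invokes the structure theorem (Theorem~\ref{structure-thm}, which rests on the dilation lemma): decomposing $f=\sum_{v\in V} f_v$ into vertical slices, that theorem shows each $f_{v_0}*a$ is periodic in the $\Z$-direction (in rank one there is only one direction available, so the partially periodic correction terms $\varphi_{v_0,w}$ are already fully periodic). One then bundles the slice convolutions into a map $j\mapsto(\tilde f_{v_0}*\tilde a(j))_{v_0\in V}$ on $\Z$ valued in a fixed finite-dimensional space, observes this map is periodic, and applies Lemma~\ref{period-extend} (the ``PTC for composition'') to replace $\tilde a$ by a periodic $\tilde a_{\p}$ with the same slice convolutions. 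Summing over $v_0$ recovers $f*a_{\p}=g$.

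Your argument is more elementary and self-contained, requiring no dilation machinery; it is essentially the approach in \cite[Theorem~2.1]{GT23}, which the paper itself cites as an alternative. The paper's approach, while heavier, illustrates the structure theorem in the simplest nontrivial setting and foreshadows how that machinery is deployed in the higher-rank results (Theorems~\ref{main-2} and~\ref{main-3}).
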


We prove this proposition (a routine generalization of \cite[Theorem 2.1]{GT23}, though with a different proof) in Section \ref{structure-sec}.  Of course, the corresponding statement for rank zero (i.e., finite) groups is trivial, since $\ell^\infty(G,\Z) = \ell^\infty(G,\Z)_{\p}$ in this case.

Our remaining results are specific to the group $\Z^2$.  In Section \ref{integer-ptc}, we establish a version of the PTC when the requirement of being an indicator function is replaced with the requirement of being in $\ell^\infty(\Z^2,\Z)$:

\begin{theorem}[Periodicity of integer tilings in $\Z^2$]\label{main-2}  Let $f \in \ell^\infty(\Z^2,\Z)_{\c}$, and let $g \in \ell^\infty(\Z^2,\Z)_{\p}$.  If the equation $f*a=g$ has a solution $a$ in $\ell^\infty(\Z^2,\Z)$, then it also has a solution $a_{\p}$ in $\ell^\infty(\Z^2,\Z)_{\p}$.
\end{theorem}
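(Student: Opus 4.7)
The plan is to combine a Fourier-analytic construction of a rational periodic solution with a two-dimensional dynamical argument, in the spirit of the $\Z^2$ periodicity arguments of Bhattacharya \cite{BH} and Greenfeld--Tao \cite{GT21}.

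\textbf{Step 1: rational periodic solution.} Let $\Lambda \subseteq \Z^2$ be a period lattice of $g$ and $H := \Z^2/\Lambda$. For each finite-order character $\chi \in \hat H$, I would test $f*a=g$ by taking Cesaro averages $\frac{1}{|B_N|}\sum_{x \in B_N} a(x)\overline{\chi(x)}$ over large boxes, passing to a subsequence along which this converges to some $\alpha_\chi$. Since $f$ is finitely supported, convolution commutes with such averaging up to a vanishing boundary term, yielding $\hat f(\chi)\alpha_\chi = \hat g(\chi)$. Consequently $\hat f(\chi) = 0 \Rightarrow \hat g(\chi) = 0$ on $\hat H$, and Fourier inversion on the finite group $H$ then produces a $\Lambda$-periodic complex solution $a_\Q$ to $f*a_\Q = g$, which must be rational-valued by Galois invariance (since $f$ and $g$ are integer-valued).

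\textbf{Step 2: reduction to residue matching.} Let $D$ be a common denominator of $a_\Q$, and set $Db := D(a - a_\Q) \in \ell^\infty(\Z^2,\Z)$. Then $f*(Db) = 0$ and $Db \equiv r \pmod D$ pointwise, where $r := -Da_\Q \bmod D$ is a $\Lambda$-periodic $\Z/D\Z$-valued function. Finding the desired $a_\p$ reduces to exhibiting a periodic integer $c \in \ell^\infty(\Z^2,\Z)_\p$ with $f*c=0$ and $c \equiv r \pmod D$; one then takes $a_\p := a_\Q + c/D$. To produce such $c$, I would consider the compact $\Lambda$-invariant shift space $X := \overline{\{T_v(Db) : v \in \Lambda\}}$ inside $\ell^\infty(\Z^2,\Z)$, taken with the product topology on bounded functions. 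Every $x \in X$ automatically satisfies $f*x=0$ and $x \equiv r \pmod D$, since both conditions are closed and $\Lambda$-translation invariant (using the $\Lambda$-periodicity of $r$).

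\textbf{Step 3 and main obstacle.} It remains to show that $X$ contains a $\Lambda'$-periodic point for some finite-index $\Lambda' \subseteq \Lambda$, which would supply the required $c$. This is the principal difficulty. I would adapt the two-dimensional periodicity arguments of \cite{BH, GT21}---originally developed for shift spaces of $\{0,1\}$-valued tilings---to the present setting of bounded $\Z$-valued shift spaces governed by the linear equation $f*c=0$. Those indicator-function arguments rely on features such as weak periodicity, non-expansive-direction analysis, and dilation symmetry of tiles; the integer-valued adaptation must replace them by module-theoretic or dimension-counting analogs, while preserving the prescribed residue $r \bmod D$ throughout the dynamical extraction. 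Theorem~\ref{main-1} already supplies some periodic integer solutions of $f*c=0$ in an abstract sense, but locating one inside the specific orbit closure $X$ with the correct residue is precisely what demands the two-dimensional rigidity particular to $\Z^2$.
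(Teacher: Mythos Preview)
Your Steps 1 and 2 constitute a clean and correct reduction, and one genuinely different from the paper's: you first construct a rational periodic solution $a_{\Q}$ by Fourier analysis on $\Z^2/\Lambda$ and Galois descent, then reduce the problem to finding a periodic $c \in \ell^\infty(\Z^2,\Z)_{\p}$ with $f*c=0$ and $c \equiv r \pmod D$. This is a nice decoupling of the ``solve the equation'' and ``achieve integrality'' issues.

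However, Step 3 is a genuine gap, and you essentially acknowledge as much. You propose to find a periodic point in the orbit closure $X$ by adapting the arguments of \cite{BH, GT21}, but those arguments are built around combinatorial features of indicator-function tilings (weak periodicity via connected-component analysis, the specific structure of $\one_F * \one_A = 1$) that have no evident analog for bounded $\Z$-valued solutions of $f*c=0$ subject to a residue constraint. Saying that the adaptation ``must replace them by module-theoretic or dimension-counting analogs'' names the difficulty without resolving it. Theorem~\ref{main-1} does produce periodic solutions of $f*c=0$, but as you note, it gives no control over the residue class modulo $D$, and there is no mechanism in your sketch for locating such a $c$ inside $X$.

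The paper's proof avoids this dynamical extraction entirely. It applies the structure theorem (Theorem~\ref{structure-thm-2}(i)) to write $a = \tilde g - \sum_{w \in W} \varphi_w$ with $\tilde g$ rational periodic and each $\varphi_w$ periodic in the single direction $qw$. Applying a retraction homomorphism $\psi\colon \R \to \Q$ and differencing in transverse directions yields $\partial_{h_1}\cdots\partial_{h_l}(\psi\circ\varphi_{w_0}) = 0 \bmod 1$, forcing $\psi\circ\varphi_{w_0}$ to be polynomial mod $1$ on cosets; because $\psi$ lands in $\Q$, the polynomial has rational coefficients and is therefore periodic mod $1$. The slicing lemma (Theorem~\ref{structure-thm-2}(ii)) then shows $(\one_{x+\langle w_0\rangle} f)*(\psi\circ\varphi_{w_0})$ is fully periodic, which permits a direct periodic-extension construction of $\varphi_{w_0,\p}$ agreeing with $\psi\circ\varphi_{w_0}$ both modulo $1$ and after convolution with slices of $f$. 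Setting $a_\p \coloneqq \tilde g - \sum_w \varphi_{w,\p}$ then gives a periodic integer solution outright. The key tools you are missing are the dilation-lemma-based structure theorem and the slicing lemma; these supply the two-dimensional rigidity directly, without passing through a shift-space periodic-point argument.
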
 

Finally, we establish a PTC for indicator functions in $\Z^2$, in which the periodic level function $g$ is not required to be an indicator function:

\begin{theorem}[Periodicity of multi-tilings in $\Z^2$]\label{main-3}  Let $f \in \ell^\infty(\Z^2,\Z)_{\c}$, and let $g \in \ell^\infty(\Z^2,\Z)_{\p}$.  If the equation $f *a =g$ has an indicator function solution $a=\one_A$ in $\ell^\infty(\Z^2,\Z)$, then it also has an indicator function solution $a_\p=\one_{A_{\p}}$ in $\ell^\infty(\Z^2,\Z)_{\p}$.
\end{theorem}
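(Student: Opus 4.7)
The plan is to combine Theorem \ref{main-2} with a suitable adaptation of the structural theory of tilings in $\Z^2$ from \cite{BH, GT21}, extended from the constant-level setting to the general periodic-level setting required here.

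First, I would apply Theorem \ref{main-2} to the given equation $f*a=g$, using the known indicator solution $\one_A$, to obtain a $\Gamma$-periodic integer-valued solution $a_\p \in \ell^\infty(\Z^2,\Z)_\p$ for some lattice $\Gamma \leq \Z^2$. Setting $c \coloneqq \one_A - a_\p$, one has $f*c = 0$ with $c \in \ell^\infty(\Z^2,\Z)$, and the remaining task is to find a periodic element $c'$ of the kernel $\{b \in \ell^\infty(\Z^2,\Z) : f*b = 0\}$ such that $\one_A - c + c'$ is $\{0,1\}$-valued and periodic. Theorem \ref{main-1} supplies an abundance of periodic kernel elements, but by itself it does not control the range of the modified solution, so additional structure specific to $\Z^2$ is needed.

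The next step would be a dilation lemma for the equation $f*\one_A = g$: following the approach of \cite[Section 4]{GT21} (based on \cite{BH}), one shows that for a suitable positive density set of integers $r$ (essentially those coprime to the finite orders arising from the roots of unity where $\hat f$ vanishes), dilated equations of the form $f^{(r)} * \one_A = g^{(r)}$ hold for appropriate dilates $f^{(r)}$, $g^{(r)}$. Combined with the vanishing-sums-of-roots-of-unity analysis underlying Theorem \ref{main-1}, these dilation identities should force $\one_A$ to exhibit periodicity structure along every rational direction in $\Z^2$, and the two-dimensional geometry, via the usual principle that two independent directions of periodicity give full periodicity, should then upgrade this to \emph{weak periodicity}: a decomposition $A = \bigsqcup_j A_j$ where each $A_j$ is periodic along some direction $v_j \in \Z^2$.

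Finally, from weak periodicity I would periodize by refining $\Gamma$ to a lattice $\Gamma'$ along which the one-directional periods $v_j$ are commensurable, and on each $\Gamma'$-coset apply an averaging/translation argument, exploiting that $f*\one_A = g$ is preserved under translations by any $\Gamma'$-period of $g$, to replace the non-periodic directions by periodic ones while maintaining the $\{0,1\}$-valued constraint. The main obstacle, I expect, is the weak-periodicity step: the argument of \cite{GT21} is tailored to the case $g \equiv \const$, and for a genuinely non-constant periodic $g$ one must analyse the interaction between the $\Gamma$-level structure of $g$ and the dilation identities, essentially by partitioning $\Z^2$ along the level sets of $g$ and running a one-dimensional structural analysis on each part, without losing the combinatorial/ergodic input that drives the proof of the single-tile result.
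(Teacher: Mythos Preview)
Your proposal has a genuine gap at the weak-periodicity step, and it is a fatal one: for level $g > 1$, indicator function solutions to $f * \one_A = g$ need \emph{not} be weakly periodic. The paper notes this explicitly right after the statement of Theorem \ref{main-3}, and \cite[Section 2]{GT21} gives a concrete example of the form $\one_F * \one_A = 4$ in which $A$ is not a finite union of one-periodic sets. The mechanism is that on certain cosets $x_0 + q\Z^2$, the structural decomposition forces the $\varphi_w$ to take the equidistributed form $(n,m) \mapsto \{\alpha(an+bm)+\beta\}$ with $\alpha$ irrational; three such terms can sum to a $\{0,1\}$-valued function without any of them individually being one-periodic. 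So no amount of dilation-and-averaging in the style of \cite{GT21} will produce weak periodicity, and the final periodization step you describe has nothing to act on. Partitioning along level sets of $g$ does not help: the obstruction lives inside a single coset of the periodicity lattice of $g$, where $g$ is already constant.

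The paper's actual argument confronts this equidistributed case head-on. After the structure theorem, a second-moment computation (Lemma \ref{first-second}, Corollary \ref{moment-consequences}) shows that on each coset one is either in a ``combinatorial'' case (at most one $\varphi_w$ is $\{0,1\}$-valued and non-constant) or an ``equidistributed'' case (exactly three $\varphi_w$ are of the form $\{\alpha_w m + \beta_w\}$ with $\alpha_w, \beta_w, 1$ linearly independent over $\Q$). The irrational parameters $\alpha_w, \beta_w$ are then encoded as $\Phi(\overline{\alpha}_w), \Phi(\overline{\beta}_w)$ for a single injective $\Q$-linear map $\Phi \in \Hom_1(\Q^{d+1},\R)$. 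The constraints that make $\one_A$ an indicator function and satisfy the sliced tiling equations become, after eliminating the combinatorial data and using equidistribution to compactify the infinitely many constraints, a predicate on $\Phi$ that is locally definable in the language of linear inequalities over $\Q$. A rationalization lemma (Proposition \ref{nondeg-real}, proved via a $p$-adic argument to enforce a non-degeneracy condition avoiding the discontinuities of $\{\cdot\}$) then replaces $\Phi$ by a rational $\Phi^\Q$ satisfying the same predicate, which yields periodic $\varphi_w^\Q$ and hence a periodic indicator solution. None of this machinery is visible in your outline, and the weak-periodicity route cannot substitute for it.
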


In particular, the periodic tiling conjecture in $\Z^2$ for level $k$ tilings $\one_F * \one_A = k$ holds for any natural number $k$.  This is despite the fact that there exist tilings at higher level which are not weakly periodic (the finite union of sets periodic in a single direction); see \cite[Section 2]{GT21}.  As mentioned earlier, the special case $k=1$ of this theorem was established in \cite{BH}, \cite{GT21}.

A standard compactness argument (originally due to H. Wang \cite{wang}) gives,  as a corollary of Theorem \ref{main-3}, the decidability of multi-tilings in $\Z^2$:

\begin{corollary}[Multi-tilings in $\Z^2$ are decidable]\label{cor:decidability-3}
    There exists an algorithm that computes in finite time, upon any given $f\in \ell^\infty(\Z^2,\Z)_{\c}$ and  $g \in \ell^\infty(\Z^2,\Z)_{\p}$, whether there exists a set $A\subset \Z^2$ such that $f *\one_A =g$.
\end{corollary}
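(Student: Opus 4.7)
The plan is to combine Theorem \ref{main-3} with a standard Wang-style compactness argument. We run two semi-decidable procedures in parallel: one searches for a periodic indicator solution, the other for a finite-size local obstruction. By Theorem \ref{main-3}, if any indicator solution exists at all then a periodic one does, so exactly one of the two searches will terminate.

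For the positive search, enumerate all pairs $(\Gamma, \one_{A_\p})$ where $\Gamma$ ranges over finite-index sublattices of $\Z^2$ (listed, say, via Hermite normal form of a $2\times 2$ integer basis matrix) and $\one_{A_\p}$ ranges over $\Gamma$-periodic indicator functions (specified by a subset of a fundamental domain of $\Gamma$). Writing $g$ as $\Gamma_g$-periodic for a given lattice $\Gamma_g$ which is part of the input, the function $f*\one_{A_\p}-g$ is $(\Gamma\cap \Gamma_g)$-periodic, so testing the equation $f*\one_{A_\p}=g$ reduces to a finite check on one fundamental domain of $\Gamma\cap\Gamma_g$. Halt with output ``yes'' as soon as a valid pair is found. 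By Theorem \ref{main-3}, this search halts if and only if the answer to the decision problem is ``yes''.

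For the negative search, let $R$ be large enough that $\supp(f)\subseteq B_R$, where $B_N$ denotes the ball of radius $N$ around the origin in $\Z^2$. For $N=1,2,3,\dots$, test whether there exists a partial assignment $a\colon B_N\to\{0,1\}$ satisfying $f*a(x)=g(x)$ for every $x\in B_{N-R}$ (each such convolution value being well-defined since it uses $a$ only on $x-\supp(f)\subseteq B_N$). If no such $a$ exists, halt with output ``no''. The tree of locally consistent partial assignments, ordered by restriction, is finitely branching at every level; so if it were infinite, K\"onig's lemma would furnish an infinite branch, which glues to a global indicator function $a\colon\Z^2\to\{0,1\}$ solving $f*a=g$ on all of $\Z^2$. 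Contrapositively, if no global indicator solution exists then this tree is finite, and the negative search terminates at some finite level $N$.

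Running the two searches in parallel therefore yields the required algorithm. The only non-routine ingredient is Theorem \ref{main-3} itself; the remainder is the standard Wang compactness argument, exploiting the compactness of the configuration space $\{0,1\}^{\Z^2}$, and so no real obstacle is expected beyond carefully bookkeeping the enumeration of sublattices and periodic indicator candidates.
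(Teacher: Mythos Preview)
Your proposal is correct and is precisely the standard Wang compactness argument the paper invokes; indeed, the paper does not spell out a proof of this corollary at all but simply states that it follows from Theorem~\ref{main-3} via ``a standard compactness argument (originally due to H.~Wang \cite{wang})''. Your write-up supplies the details of that argument faithfully.
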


We pose the analogous version of Theorem \ref{main-2} as an open problem:

\begin{question} Is there an algorithm that computes in finite time, upon any given $f\in \ell^\infty(\Z^2,\Z)_{\c}$ and  $g \in \ell^\infty(\Z^2,\Z)_{\p}$, whether there exists $a \in \ell^\infty(\Z^2,\Z)$ such that $f * a =g$?
\end{question}

Theorem \ref{main-2} allows one to assume that $a$ is periodic in the above question.  However, the compactness argument from \cite{wang} does not directly apply because we have no computable uniform bound on $a$.

The results in \cite{GT22} imply that the analogue of Theorem \ref{main-3} will fail in sufficiently high dimension.  However, the analogous question for Theorem \ref{main-2} remains open:

\begin{question}   Let $f \in \ell^\infty(\Z^d,\Z)_{\c}$ and $g \in \ell^\infty(\Z^d,\Z)_{\p}$.  If the equation $f*a=g$ has a solution $a$ in $\ell^\infty(\Z^d,\Z)$, does it also has a solution $a_{\p}$ in $\ell^\infty(\Z^2,\Z)_{\p}$?
\end{question}

\subsection{Methods of proof}

All of our main results rely on a \emph{dilation lemma} (see \cite[Proposition 3.1]{BH}, \cite[Lemma 3.1]{GT21}, \cite[Theorem 1.2(i)]{ggrt}), which establishes the remarkable fact that the identity $f*a=g$ is stable under certain dilations of the function $f$.  By averaging over dilations, this leads to the \emph{structure theorem} (see \cite[Theorem 3.3]{BH}, \cite[Theorem 1.7]{GT21}, \cite[Theorem 1.2(ii)]{ggrt}) that asserts that solutions $a$ to $f*a=g$ have a special form.  For instance, in the group $\Z^2$, one has a decomposition
$$ a = \tilde g - \sum_{w \in W} \varphi_w$$
where $\tilde g$ is periodic, and each $\varphi_w$ is $qw$-periodic for a different primitive direction $w$ and some ``sufficiently divisible'' $q$ (see Theorem \ref{structure-thm-2} for a precise statement).  An additional useful property is the ``slicing lemma''\footnote{See also \cite[Lemma 5.1]{GT21}.} on $\Z^2$, which asserts that the convolutions $(\one_{x+\langle w\rangle} f)*\varphi_w$ of $\varphi_w$ with various ``slices'' $\one_{x+\langle w\rangle} f$ of $f$ are periodic.   We lay out our formulation of the dilation lemma, structure theorem, and slicing lemma in Section \ref{structure-sec}.

This structural theory is easiest to apply for the homogeneous equation $f*a=0$, because the solution space to this equation is invariant under difference operators $\partial_h$, which can be used to largely eliminate the role of the partially periodic functions $\varphi_w$.  As such, the proof of Theorem \ref{main-1} is quite short, and is given in Section \ref{integer-sec}.

Then, in Section \ref{sec:decidability}, based on Theorem \ref{main-1}, we prove Corollary \ref{cor:decidability-1} by using the theory of vanishing sums of roots of unity to show that the character $\chi$ in Theorem \ref{main-1}(iii) can be chosen involves roots of unity whose order can be controlled by a computable quantity $M=M(f)$ depending only on $f$. 

The case of integer-valued solutions $a \in \ell^\infty(\Z^2,\Z)$ to $f*a=g$ on the group $\Z^2$ is also relatively easy.  Here, one can use repeated differentiation to show that the functions $\varphi_w$ mentioned previously behaves in a polynomial fashion modulo $1$.  The coefficients of these polynomials can be irrational, leading to non-periodic behavior; however, by applying a retraction homomorphism from $\R$ to $\Q$, one can map these coefficients to rational numbers, thus effectively making $\varphi_w$ periodic modulo one.  Some further modification of the $\varphi_w$, utilizing the slicing lemma (Theorem \ref{structure-thm-2}(ii)), can then be performed to make $\varphi_w$ genuinely periodic, as opposed to merely being periodic modulo one; see Section \ref{integer-ptc}.

The case of indicator function solutions $f*\one_A=g$ on $\Z^2$ is significantly harder, and requires a much finer analysis of the structure of these solutions in order to construct a solution $a_\p = \one_{A_\p}$ that is simultaneously periodic and an indicator function.  As it turns out, there are two primary sources of non-periodicity in these indicator function solutions.  The first source might be termed ``combinatorial'' non-periodicity, arising from sets $A$ that contain subsets $A_{x_0}$ that are periodic in just one direction.  A typical example occurs, e.g.,  when $f = \one_{\{(0,0), (1,0)\}}$, $g = 1$, and $A$ is a set of the form
$$ A \coloneqq \{ (2n+a(m),m) \in \Z^2: n,m\in \Z \},$$
for some arbitrary function $a \colon \Z \to \{0,1\}$. Then we have $f * \one_A = 1$, but $A$ only exhibits periodicity in the direction $(2,0)$ in general.  A second source of non-periodicity, first identified in \cite[Section 2]{GT21}, comes from equidistributed functions such as
$$ (n,m) \mapsto \{ \alpha (an+bm) + \beta \}$$
for some irrational numbers $\alpha,\beta$ and integer coefficients $a,b$.  In the higher level case when $g$ can exceed $1$, it is possible for the $\varphi_w$ to be of this form on certain cosets of $\Z^2$; we refer the reader to \cite[Section 2]{GT21} for an explicit example of the form $\one_F * \one_A = 4$.  

Using the second moment method (as well as a little bit of probability theory in order to avoid some rare bad events), it is possible to show that these two scenarios are in some sense the \emph{only} sources of non-periodicity: a solution to the equation $f * \one_A = g$ can be described in terms of some combinatorial data $A_{x_0}$ (that is periodic in one direction), as well as some ``equidistributed'' data such as the real coefficients $\alpha$, $\beta$.  This analysis extends the analysis in \cite{GT21}, which handled the case where $g=1$.  In that special case, the equidistributed case could be ruled out by \emph{ad hoc} methods; but here, the equidistributed case can very much exist, and requires new arguments to treat.

The equidistributed data $\alpha,\beta$ may be required to obey some linear inequalities in order to generate a solution $f * \one_A = g$.  Unfortunately, such inequalities are not necessarily preserved by retraction homomorphisms, so the technique used to prove Theorem \ref{main-2} does not seem to be available in this case.  Nevertheless, it is possible to eliminate the role of the combinatorial data, and perform some ``quantifier elimination'' on the constraints on the equidistributed data, to reduce those constraints to a (locally) finite boolean combination of linear inequalities on this data (with rational coefficients).  One can then appeal to a general ``rationalization'' lemma (of a model-theoretic flavor) to replace this data with rational numbers, which ends up leading to a periodic solution, thus establishing Theorem \ref{main-3}.  The details of this argument are provided in Sections \ref{cleaning-sec}, \ref{highlevel-sec}, after some preliminaries\footnote{There is one technical issue that arises when executing this strategy, namely that one has to avoid the discontinuities of the fractional part operator $\{\}$.  This turns out to be possible using some elementary $p$-adic techniques, although it does cause some technical complications to the arguments; see Proposition \ref{nondeg-real}.} in Section \ref{rationalization-sec}.

\subsection{Acknowledgments}

RG was partially supported by the Association of Members of the Institute for Advanced Study (AMIAS) and by NSF grant DMS-2242871 and DMS-2448416. TT was supported by NSF grant DMS-1764034 and DMS-2347850.

We are grateful to Tim Austin for initiating the discussion about integer tilings and for sharing his proof of Theorem \ref{main-1} (provided in Section \ref{integer-sec}).

\section{Notation}

Given $h \in G$, we define the difference operator $\partial_h \colon \ell^\infty(G,\C) \to \ell^\infty(G,\C)$ by the formula
$$ \partial_h f(x) \coloneqq f(x+h)-f(x).$$
Equivalently, $\partial_h$ is convolution with $\one_{\{h\}} - \one_{\{0\}}$.  In particular, the homomorphisms $\partial_h$ preserves all the spaces in Figure \ref{fig:inclusions}, and commute with each other and with convolutions.

If $h \in G$, we write $\langle h \rangle \coloneqq \{ nh: n \in \Z \}$ for the cyclic group generated by $h$, and abbreviate ``$\langle h \rangle$-periodic'' as ``$h$-periodic''.  Thus, a function $f$ is $h$-periodic if and only if it is annihilated by $\partial_h$.

We define a wedge product $\wedge \colon \Z^2 \times \Z^2 \to \Z$ by the formula
$$ (a,b) \wedge (c,d) \coloneqq ad-bc.$$
An element $w$ of $\Z^2$ will be called \emph{primitive} if it is non-zero and is not of the form $mw'$ for some $m \geq 2$ and $w' \in \Z^2$. Note that every non-zero element of $\Z^2$ is an integer multiple of a primitive element.  If $w \in \Z^2$ is primitive, then by Bezout's theorem we can find a complementary primitive vector $w^* \in \Z^2$ such that $w \wedge w^* = 1$.  This implies that $w$ and $w^*$ form a basis of generators for $\Z^2$ as an Abelian group; indeed for any $y \in \Z^2$ we see from Cram\'er's rule that
\begin{equation}\label{ynm}
 y = (w \wedge y) w^* - (w^* \wedge y) w 
 \end{equation}
The choice of complementary vector $w^*$ is not quite unique (it is determined up to a shift in $\langle w \rangle$), but we will arbitrarily choose some recipe for producing $w^*$ from $w$ and fix it henceforth.  

We will frequently work with various moduli $q \geq 1$, which we will order by divisibility.  In particular, we will use ``for $q$ sufficiently divisible'' as shorthand for ``for $q$ divisible by an appropriate factor'', and ``making $q$ more divisible'' as shorthand for ``replacing $q$ by a suitable multiple of $q$''.  Thus, for instance, a function $f \in \ell^\infty(G,\C)$ is periodic if and only if it is $qG$-periodic for sufficiently divisible $q$. 

A basic tool for building periodic functions for us will be that of periodically extending off of a fundamental domain.

\begin{definition}\label{period-def}  Let $\Gamma$ be a subgroup of an Abelian group $G$.  A \emph{fundamental domain} $\Omega$ for $G/\Gamma$ is a subset of $G$ that meets every coset $x+\Gamma$ of $G$ in exactly one point.  Define the projection operator $\Pi_{\Omega,\Gamma} \colon G \to G$ by setting $\Pi_{\Omega,\Gamma}(g)$ to be the unique element $\omega$ of $\Omega$ such that $g \in \omega + \Gamma$; equivalently, $\Pi_{\Omega,\Gamma}$ is the unique $\Gamma$-periodic map that is the identity on $\Omega$.  More generally, for any function $f: G \to X$, $f \circ \Pi_{\Omega,\Gamma} \colon G \to X$ is the unique $\Gamma$-periodic function that agrees with $f$ on $\Omega$.
\end{definition}

As a simple application of this construction, we observe the following claim, that shows that the analogue of the PTC for function composition is trivially true:

\begin{lemma}[PTC for composition]\label{period-extend}  Let $\Gamma$ be a subgroup of an Abelian group $G$, let $g \colon G \to Y$ be a $\Gamma$-periodic function, and let $F \colon X \to Y$ be another function.  If the equation $F \circ a = g$ has a solution $a \colon G \to X$, then it also has a $\Gamma$-periodic solution $a_\p \colon G \to X$.
\end{lemma}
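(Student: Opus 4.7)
The plan is to take a fundamental domain for $G/\Gamma$, restrict the hypothetical solution $a$ to that domain, and then periodically extend; the machinery for doing this is already packaged in Definition \ref{period-def}.

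More precisely, I would first fix any fundamental domain $\Omega$ for $G/\Gamma$ (such a domain exists since one can simply pick one representative from each coset $x+\Gamma$ by the axiom of choice, which costs nothing here since $Y$ and $X$ are unrestricted sets). With the projection $\Pi_{\Omega,\Gamma} \colon G \to G$ from Definition \ref{period-def} in hand, I define
$$ a_{\p} \coloneqq a \circ \Pi_{\Omega,\Gamma}.$$
By the defining property of $\Pi_{\Omega,\Gamma}$ (namely that it is $\Gamma$-periodic), $a_\p$ is automatically $\Gamma$-periodic as a function $G \to X$.

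It then remains to verify that $F \circ a_\p = g$. For any $x \in G$, set $\omega \coloneqq \Pi_{\Omega,\Gamma}(x) \in \Omega$, so that $x - \omega \in \Gamma$. Then $a_\p(x) = a(\omega)$, and using the hypothesis $F \circ a = g$ followed by the $\Gamma$-periodicity of $g$ we obtain
$$ F(a_\p(x)) = F(a(\omega)) = g(\omega) = g(x),$$
as required. There is no real obstacle here; the lemma is essentially a tautological consequence of the definition of $\Pi_{\Omega,\Gamma}$, and is included precisely to highlight that the difficulty in the PTC lies entirely in the requirement that $a$ be an indicator function (or otherwise live in a specified submodule), not in the abstract problem of producing any periodic solution.
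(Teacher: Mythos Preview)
Your proof is correct and is essentially identical to the paper's own argument: both define $a_\p \coloneqq a \circ \Pi_{\Omega,\Gamma}$ for an arbitrary fundamental domain $\Omega$ and then verify $F \circ a_\p = g$, the only difference being that the paper writes the verification compositionally as $F \circ a \circ \Pi_{\Omega,\Gamma} = g \circ \Pi_{\Omega,\Gamma} = g$ while you unpack it pointwise.
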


\begin{proof}  Take $a_\p \coloneqq a \circ \Pi_{\Omega,\Gamma}$, where $\Omega$ is an arbitrarily chosen fundamental domain for $G/\Gamma$.  Then $a_\p$ is $\Gamma$-periodic, and
$$ F \circ a_\p = F \circ a \circ \Pi_{\Omega,\Gamma} = g \circ \Pi_{\Omega,\Gamma} = g.$$
\end{proof}

\begin{remark} This lemma can be used to simplify the proof of \cite[Corollary 5.3]{GT21}, as the periodic function $z \mapsto \tilde A_z$ in that proof can be constructed directly from the function $z \mapsto A_z$ by means of this lemma.
\end{remark}

We write $[q]$ for the set $\{0,\dots,q-1\}$; this is a  fundamental domain for $\Z/q\Z$.

We will often make use of \emph{retraction homomorphisms} $\psi \colon \R \to \Q$, which are $\Q$-linear maps (viewing $\R$ as a vector space over $\Q$) that are the identity on $\Q$.  Such homomorphisms exist thanks to Zorn's lemma (or the Hahn--Banach theorem), although they cannot be continuous or measurable, and do not map bounded sets to bounded sets.  One can similarly construct retraction homomorphisms $\psi \colon \C \to \Q$ on the complex numbers.

The fractional part $\{x\}$ of a real number $x$ is defined as $\{x\} \coloneqq x - \lfloor x \rfloor$, where $\lfloor x\rfloor$ is the greatest integer less than or equal to $x$; it is also the projection operator $\Pi_{[0,1),\Z}$.  The discontinuities of this function $\{\}$ at the integers will cause some technical difficulty, and we will invest some effort in avoiding them in the proof of Theorem \ref{main-3}.

Given a finite set $F$, we use $|F|$ to denote its cardinality.

We use the Hahn--Banach theorem\footnote{One could also use an ultrafilter here if desired.} to select a generalized limit functional $\widetilde{\lim} \colon \ell^\infty(\N, \R) \to \R$ that extends the usual limit functional on convergent sequences, is a linear functional of norm $1$, and is such that
\begin{equation}\label{contract}
\liminf_{n \to\infty} a_n \leq \widetilde{\lim}_{n \to \infty} a_n \leq \limsup_{n \to \infty} a_n
\end{equation}
for any bounded real sequence $a_n$.  Given any shift $v \in G$, we define the (one-sided) projection operator $\pi_v \colon \ell^\infty(G,\R) \to \ell^\infty(G,\R)$ by the formula
\begin{equation}\label{piv-def}
\pi_v a(x) \coloneqq \widetilde{\lim}_{N \to \infty} \frac{1}{N} \sum_{n=1}^N a(x+nv).
\end{equation}
From \eqref{contract} we see that $\pi_v$ is a contraction on $\ell^\infty(G,\R)$, that is the identity on $v$-periodic functions, and whose image is always $v$-invariant, thus $\partial_v \pi_v = 0$ and $\pi_v \pi_v = \pi_v$.  Also, $\pi_v$ commutes with convolutions by functions in $\ell^\infty(G,\R)_{\c}$, and thus by difference operators $\partial_w$ for any $w \in G$.  In particular, if $f \in \ell^\infty(G,\R)$ is $w$-periodic for some $w \in G$, then $\pi_v f$ is also $w$-periodic.  For $v=0$, the operator $\pi_v$ is simply the identity, but we will usually work with the case when the shift $v$ is non-zero.

\begin{remark}  With our current formalism, we cannot quite guarantee that the projection operators $\pi_v$ commute with each other, or that $\pi_v = \pi_{-v}$.  Such properties can be ensured (up to almost everywhere equivalence) by transferring to an ergodic theory framework and applying the ergodic theorem, but we will not need to do so here.  
\end{remark}

\section{The dilation lemma, and some consequences}\label{structure-sec}

Let $G$ be a finitely generated Abelian group.
Given a function $f \in \ell^\infty(G,\Z)_{\c}$, which we can write as $f = \sum_x f(x) \one_{\{x\}}$ since there are only finitely many non-zero summands, we can define the dilates $\tau_r f \in \ell^\infty(G,\Z)_{\c}$ for any natural number $r$ by the formula
$$ \tau_r f \coloneqq \sum_x f(x) \one_{\{rx\}}.$$
The following ``dilation lemma'' is a routine modification of existing dilation lemmas in the literature (see, e.g., \cite{tijdeman}, \cite[Proposition 3.1]{BH}, \cite[Lemma 3.1]{GT21}, \cite[Theorem 1.1]{ggrt}, \cite[Lemma 2]{KS}, \cite[Theorem 10]{szegedy}):

\begin{lemma}[Dilation lemma]\label{dil}  Let $f \in \ell^\infty(G,\Z)_{\c}$ and $g \in \ell^\infty(G,\Z)_{\p}$.  Then for sufficiently divisible $q$, if $f*a = g$ for some $a \in \ell^\infty(G,\Z)$, then $(\tau_r f)*a = g$ for all $r \geq 1$ that is equal to $1$ mod $q$.
\end{lemma}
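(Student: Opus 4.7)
The dilation lemma is a routine adaptation of the prior dilation lemmas cited in the statement (Tijdeman, Bhattacharya, Greenfeld--Tao, Kari--Szabados, Szegedy), and my proof would follow their Frobenius-based strategy.

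First, I would reformulate everything in the group ring $\Z[G]$, identifying $f \in \ell^\infty(G,\Z)_{\c}$ with $\sum_x f(x)[x]$, so that convolution becomes multiplication and $\tau_r$ is the ring endomorphism of $\Z[G]$ induced by $x \mapsto rx$. The central algebraic input is the Frobenius identity: for any prime $p$,
\[
\tau_p f \;\equiv\; f^p \pmod{p\,\Z[G]},
\]
since in $\mathbb{F}_p[G]$ the Frobenius $\xi \mapsto \xi^p$ is a ring endomorphism sending $[x]$ to $[px]$, combined with $f(x)^p \equiv f(x) \pmod p$ by Fermat. Writing $\tau_p f = f^p - p\,w_p$ with $w_p \in \Z[G]$ and iterating the relation $f*a = g$ yields
\[
(\tau_p f)*a \;=\; f^{p-1}*g \,-\, p\,(w_p * a).
\]

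The main step is then to establish the lemma for $r = p$ prime with $p \equiv 1 \pmod{q}$. Since $g$ is $\Gamma$-periodic, the orbit $\{f^k * g\}_{k \ge 0}$ lies in the finite-rank $\Z$-module $\ell^\infty(G/\Gamma,\Z)$ and is eventually periodic in $k$; choosing $q$ divisible by its eventual period (and past its initial delay) ensures $f^{p-1}*g = g$ exactly whenever $p \equiv 1 \pmod q$. The remaining task is to pass from the resulting mod-$p$ identity to the exact equality $(\tau_p f)*a = g$. This is the technical heart of the argument: one must upgrade the divisibility $(\tau_p f)*a - g \in p\,\ell^\infty(G,\Z)$ to an exact equality without a priori bounds on $\|a\|_\infty$. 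This is handled in the cited references, either by iterating the Frobenius identity modulo higher powers of $p$ or by choosing $q$ so that the smallest primes in the progression $1 \pmod q$ exceed any relevant bound automatically.

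Once the prime case is in place, the lemma for arbitrary $r \equiv 1 \pmod{q}$ follows via the multiplicativity $\tau_{rs} = \tau_r \circ \tau_s$ of dilations, combined with Dirichlet's theorem on primes in arithmetic progressions: one iterates the prime case on the evolving triples $(\tau_{p_1 \cdots p_{i-1}} f, a, g)$ for primes $p_i \equiv 1 \pmod q$ chosen to realize the target $r$ (possibly after enlarging $q$ to close the multiplicative progression). A more direct alternative is to write
\[
\tau_r f - f \;=\; \sum_x f(x)\,T^x\,(T^{(r-1)x} - 1)
\]
and observe that each factor $T^{(r-1)x} - 1$ is a $\Z[G]$-multiple of $T^{qx} - 1$; one then exploits the key vanishing $f*(T^h a - a) = T^h g - g = 0$ for $h \in \Gamma$, valid once $q$ is chosen so that $qG \subset \Gamma$, to conclude $(\tau_r f - f)*a = 0$.

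The main obstacle is the careful simultaneous choice of the modulus $q$: it must be divisible enough to make the Fermat-type identity $f^{p-1}*g = g$ hold, to support the multiplicative extension from prime dilations to arbitrary residues $r \equiv 1 \pmod q$, and to render the mod-$p$ divisibility compatible with exact equality uniformly over all $a \in \ell^\infty(G,\Z)$ solving $f * a = g$. This balancing act is the one non-trivial part of the argument, and is precisely what is addressed in the cited prior dilation lemmas.
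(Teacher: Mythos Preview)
Your Frobenius setup is correct, but the step claiming $f^{p-1}*g = g$ for $p \equiv 1 \pmod q$ fails. The orbit $\{f^k * g\}_{k \geq 0}$ lives in the finite-rank $\Z$-module $\Z^{|G/\Gamma|}$, but a sequence in a free $\Z$-module of finite rank need not be eventually periodic: take $G = \Z$, $f = 2 \cdot \one_{\{0\}}$, $g = 1$, so that $f^k * g = 2^k$, which never returns to $g$. Thus your identity $(\tau_p f)*a = f^{p-1}*g - p\,(w_p*a)$ cannot be simplified to $(\tau_p f)*a = g - p\,(w_p*a)$, and the argument stalls at exactly the point you flag as the ``technical heart''.

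The paper sidesteps this entirely. It first establishes the case $g=0$ directly: from $f*a=0$ one gets $f^{*p}*a=0$ exactly, hence $(\tau_p f)*a \equiv 0 \pmod p$; since $\|(\tau_p f)*a\|_\infty$ is bounded independently of $p$, this forces $(\tau_p f)*a = 0$ once $p$ exceeds that bound. For general $g$, the paper does \emph{not} attempt to control $f^{p-1}*g$. Instead it differences: $f * \partial_{qh} a = \partial_{qh} g = 0$ for all $h \in G$, which lands in the $g=0$ case and yields $\tau_r f * \partial_{qh} a = 0$. This says $(\tau_r f - f)*a$ is $qG$-periodic; since $\tau_r f - f$ has mean zero on each coset of $qG$ when $r \equiv 1 \pmod q$, amenability of $qG$ forces $(\tau_r f - f)*a$ to vanish.

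Your ``direct alternative'' at the end has a related gap. Factoring each $[rx]-[x]$ through $[qx]-[0]$ is fine, but the resulting expression $(\tau_r f - f)*a = \sum_x f(x)\,u_x * ([qx]-[0])*a$ involves a \emph{different} shift $qx$ in each summand. The hypothesis $f * \partial_{qh} a = 0$ holds for each fixed $h$, but it entangles all the $x$-summands through the convolution with $f$ and does not let you kill the terms $([qx]-[0])*a$ individually. This is exactly why the paper needs the periodicity-plus-mean-zero argument rather than a purely algebraic cancellation.
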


\begin{proof}  We first consider the case $g=0$.  By the fundamental theorem of arithmetic, it suffices to establish the claim with $r$ replaced by all primes $p$ larger than some threshold $w$ (and then taking $q$ to be divisible by all primes up to $w$).  If $f*a=0$, then clearly $f^{*p}*a=0$, where $f^{*p}$ is the convolution of $p$ copies of $f$.  Since the Frobenius map $x \mapsto x^p$ is a homomorphism in characteristic $p$, we have
$$ f^{*p} = \tau_p f \mod p$$
and thus
$$ (\tau_p f) * a = 0 \mod p.$$
Note that $(\tau_p f) * a$ is bounded independently of $p$.  Thus, for $p$ large enough, we obtain $(\tau_p f) * a = 0$ as required.  We remark that $q$ can be chosen to depend on $a$ only through the $\ell^\infty$ norm of $a$.

Now we consider the case of general $g$. Since $g$ is assumed to be periodic, we may assume (for sufficiently divisible $q$) that $\partial_{qh} g = 0$ for all $h \in G$.  In particular, if $f*a=g$, then
$$ f * \partial_{qh} a = 0.$$
Since the $\ell^\infty$ norm of $\partial_{qh} a$ is bounded uniformly in $q$ and $h$, we conclude from the previous discussion that (for sufficiently divisible $q$) one has
$$ \tau_r f * \partial_{qh} a = 0$$
for any $r \geq 1$ coprime to $q$.  In particular, 
$$ \partial_{qh} ( (\tau_r f - f) * a ) =0,$$
that is to say $(\tau_r f - f) * a$ is periodic with respect to the finite index subgroup $qG$.  On the other hand, for $r = 1$ mod $q$, $\tau_r f - f$ has mean zero on every coset of $qG$.  As $qG$ is amenable, this implies that $(\tau_r f - f) * a$ also has mean zero on every such coset, and thus vanishes identically by periodicity.  We conclude that $\tau_r f * a = f * a = g$, as required.
\end{proof}

This leads to the following structure theorem.

\begin{theorem}[Structure theorem]\label{structure-thm} Let $G = \Z^d \times H$ for some $d\geq 1$ and finite Abelian $H$, and let $f \in \ell^\infty(G,\Z)_{\c}$.  For each $v \in \Z^d$, let $f_v \coloneqq \one_{\{v\} \times H} f$ be the restriction of $f$ to $\{v\} \times H$, and let $V \subset \Z^d$ be the (finite) set of $v$ for which $f_v$ is not identically zero, thus $f = \sum_{v \in V} f_v$.  Suppose that $f * a = g$ for some $a \in \ell^\infty(G,\Z)$ and $g \in \ell^\infty(G,\Z)_{\p}$.  Then for any $v_0 \in V$, one has the decomposition
$$ f_{v_0} * a = g - \sum_{w \in W_{v_0}} \varphi_{v_0,w}$$
where $W_{v_0}=W_{v_0}(f)$ is a finite set of linearly independent primitive elements of $\Z^d$ that depends only on $f$, and $\varphi_{v_0,w} \in \ell^\infty(G,\R)$ is $qw$-periodic for sufficiently divisible $q$ (where we view $\Z^d$ as a subgroup of $\Z^d \times H$).
\end{theorem}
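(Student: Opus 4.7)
My plan is to combine the dilation lemma (Lemma \ref{dil}) with averaging along the relevant directions using the one-sided projection operators $\pi_v$ defined in \eqref{piv-def}.

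\textbf{Step 1 (Dilation and reformulation).} First I would invoke Lemma \ref{dil} to obtain $(\tau_r f)*a=g$ for every $r\equiv 1 \pmod q$, where $q$ is sufficiently divisible so that $|H|$ divides $q$ and $g$ is $qG$-periodic. The congruence $r\equiv 1 \pmod{|H|}$ forces $rh=h$ for every $h\in H$, so each dilate $\tau_r f_v$ is simply the $\Z^d$-translate of $f_v$ by $((r-1)v,0)$. Writing $r=1+qk$ and $F_v:=f_v*a$, the identity $(\tau_r f)*a=g$ becomes
$$ \sum_{v\in V} F_v\bigl(\,\cdot - (qkv,0)\bigr) = g \qquad (k\in\N).$$

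\textbf{Step 2 (Isolation and averaging).} I would then translate both sides by $(-qkv_0,0)$; by $qG$-periodicity of $g$ the right-hand side is unaffected, and the $v_0$ term becomes $F_{v_0}$, giving
$$ F_{v_0}+\sum_{v\in V\setminus\{v_0\}} F_v\bigl(\,\cdot-(qk(v-v_0),0)\bigr)=g\qquad (k\in\N).$$
Averaging in $k$ with the generalized limit $\widetilde{\lim}$, each shifted term converges to a one-sided projection of $F_v$ of the form $\pi_{(q(v_0-v),0)}F_v$, which is $q(v-v_0)$-periodic. Writing $v-v_0=m_v w_v$ with $w_v$ primitive and absorbing each $m_v$ into the divisibility requirement on $q$, every such projection becomes $qw_v$-periodic. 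Grouping summands by primitive direction and setting
$$ \varphi'_{v_0,w}:=\sum_{\substack{v\in V\setminus\{v_0\}\\ w_v=w}} \pi_{(q(v_0-v),0)}F_v,$$
I arrive at a decomposition $F_{v_0}=g-\sum_{w\in W'_{v_0}}\varphi'_{v_0,w}$, where $W'_{v_0}:=\{w_v:v\in V\setminus\{v_0\}\}$ is a finite set of primitive vectors depending only on $f$, and each $\varphi'_{v_0,w}\in\ell^\infty(G,\R)$ is $qw$-periodic.

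\textbf{Step 3 (Linear independence).} The main obstacle is promoting $W'_{v_0}$ to the \emph{linearly independent} collection $W_{v_0}\subset\Z^d$ demanded by the theorem. When the vectors of $W'_{v_0}$ are already linearly independent one takes $W_{v_0}=W'_{v_0}$ and is done. Otherwise, if some primitive $w_0\in W'_{v_0}$ lies in the $\Q$-span of $W'_{v_0}\setminus\{w_0\}$, my plan is to apply further projections $\pi_{qw}$ for $w\in W'_{v_0}\setminus\{w_0\}$ (and, if needed, further dilation identities obtained from repeated application of Lemma \ref{dil}) to the equation governing those $F_v$ with $w_v=w_0$, in order to decompose $\varphi'_{v_0,w_0}$ as a sum of $qw$-periodic pieces indexed by the remaining directions, iterating (and enlarging $q$) until only a canonically chosen linearly independent subset $W_{v_0}\subseteq W'_{v_0}$ remains, still depending only on $f$. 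The delicate point is that $qw_0$-periodicity by itself does \emph{not} entail such a decomposition; the step must exploit the specific structure of $\varphi'_{v_0,w_0}$ as a sum of one-sided projections of convolutions $f_v*a$, together with the supplementary identities furnished by dilating $f$ in several directions simultaneously.
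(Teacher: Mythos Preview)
Your Steps 1 and 2 are correct and coincide with the paper's proof: the paper normalizes $v_0=0$ by translation, applies the dilation lemma to get $(\tau_{1+qr}f)*a=g$, observes that $\tau_{1+qr}f_v=\one_{\{qrv\}}*f_v$ once $|H|\mid q$, isolates $f_0*a$, averages in $r$ via $\pi_{qv}$, and groups the resulting $qv$-periodic pieces by the primitive direction of $v$. That is exactly your argument (your extra translation by $-qkv_0$ is just undoing the normalization $v_0=0$).

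Your Step 3, however, is chasing a phantom. The phrase ``linearly independent primitive elements'' in the statement must be read as \emph{pairwise} linearly independent (equivalently: distinct primitive directions). This is forced by the paper's own applications: in the $\Z^2$ setting the set $W$ is explicitly allowed to have three elements (see Corollary \ref{moment-consequences}(ii)), which is impossible under the literal reading in a rank-$2$ group, and every use of the hypothesis in the paper (e.g., the proof of Theorem \ref{structure-thm-2}(ii), where one needs only that $qw_0$ and $qw$ generate a finite-index subgroup for $w\neq w_0$) invokes linear independence of \emph{pairs} only. Under this reading, your $W'_{v_0}$ already qualifies once you pick one primitive representative per line through the origin, and the paper's proof indeed stops at that point. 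Your proposed iteration of further projections is therefore unnecessary, and as you yourself note, the mechanism you sketch for it (redistributing a $qw_0$-periodic function among other directions) would not work without additional input anyway.
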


\begin{proof} We adapt the proof of \cite[Theorem 1.7]{GT21}. By translation we may assume without loss of generality that $v_0=0$.  By Lemma \ref{dil}, we see for sufficiently divisible $q$ that
$$
(\tau_{1+qr} f) * a = g
$$
for all $r \geq 0$.  By making $q$ more divisible as necessary, we may assume that $q$ is a multiple of the order of $H$, so in particular $\tau_{1+qr} f_0 = f_0$, and more generally $\tau_{1+qr} f_v = \one_{\{qrv\}} * f_v$ for any $v$.  We then have
$$ f_0 * a = g - \sum_{v \in V \backslash \{0\}} \one_{\{qrv\}} * f_v * a.$$
Averaging in $r$ and taking generalized limits, we conclude that
$$ f_0 * a = g - \sum_{v \in V \backslash \{0\}} \pi_{qv} (f_v * a).$$
Each $v \in V \backslash \{0\}$ is a integer multiple of some primitive element $w$ in $\Z^d$. If we let $W_0=W_0(f)$ be the set of such $w$ obtained, and $\varphi_{0,w}$ the sum of all the $\pi_{qv} (f_v * a)$ associated to a given $w$, we obtain the desired decomposition
$$ f_{0} * a = g - \sum_{w \in W_{0}} \varphi_{0,w}$$
with each $\varphi_{0,w}$ $q'w$-periodic for some positive integer $q'$.
\end{proof}

As a first quick application of this theorem, we can now establish Proposition \ref{ptc-1d}.

\begin{proof}[Proof of Proposition \ref{ptc-1d}]  Let $H,f,g$ be as in the proposition, thus we have a solution $a \in \ell^\infty(\Z \times H, \Z)$ to $f*a=g$.  Decomposing $f = \sum_{v \in V} f_v$ as in Theorem \ref{structure-thm}, we conclude from the above theorem that for each $v_0 \in V$, $f_{v_0}*a$ is $q$-periodic for some sufficiently divisible $q$, which we can take to be independent of $v_0$ by taking least common multiples.  Let us now write this in a different way.  If we let $\tilde a \colon \Z \to \ell^\infty(H,\Z)$ be the function $\tilde a(j)(h) \coloneqq a(j,h)$, and similarly let $\tilde f_{v_0} \in \ell^\infty(H,\Z)$ be the function $\tilde f_{v_0}(h) = f_{v_0}(v_0,h) = f(v_0,h)$, then by the above discussion the map
$$ j \mapsto ( \tilde f_{v_0} * \tilde a(j) )_{v_0 \in V}$$
is a $q$-periodic map from $\Z$ to $\ell^\infty(H,\Z)^V$.  Applying Lemma \ref{period-extend}, we can thus find a $q$-periodic map $\tilde a_\p \colon \Z \to \ell^\infty(H,\Z)$ such that
$$ (\tilde f_{v_0} * \tilde a(j))_{v_0 \in V} = (\tilde f_{v_0} * \tilde a_\p(j))_{v_0 \in V}$$
for all $j \in \Z$.  If we then let $a_\p \in \ell^\infty(\Z \times H,\Z)_\p$ be defined by $a_\p(j,h) \coloneqq \tilde a_\p(j)(h)$, we see from construction that $a_\p$ is periodic and $f_{v_0}*a_\p = f_{v_0} * a$ for all $v_0 \in V$, hence on summing in $v_0$ we have $f * a_\p = f*a = g$ as required.
\end{proof}

Now we study the case $G=\Z^2$, where the structure theorem is also quite strong.  It will be convenient to make the following definition.

\begin{definition}[Structured solution]   Let $f \in \ell^\infty(\Z^2,\Z)_{\c} \backslash \{0\}$ and $g \in \ell^\infty(\Z^2,\Z)_{\p}$.  A \emph{structured solution} to the equation $f*a=g$ is a tuple $(a, \tilde g, W, (\varphi_w)_{w \in W})$, where $a \in \ell^\infty(\Z^2,\Z)$, $\tilde g \in \ell^\infty(\Z^2,\R)_{\p}$, $W$ is a finite set of linearly independent primitive elements of $\Z^2$, and for each $w \in W$, $\varphi_w \in \ell^\infty(\Z^2,\R)$ is $qw$-periodic for sufficiently divisible $q$, such that $a$ solves the equation
\begin{equation}\label{fag-eq}
f*a = g
\end{equation}
and has the representation
\begin{equation}\label{a-rep}
a = \tilde g - \sum_{w \in W} \varphi_w.
\end{equation}
If $a$ is an indicator function $\one_A$, we refer to $(\one_A, \tilde g, W, (\varphi_w)_{w \in W})$ as an \emph{indicator function structured solution}, thus in this case we have the equation
\begin{equation}\label{fag-eq-ind}
f*\one_A = g
\end{equation}
and the representation
\begin{equation}\label{a-rep-ind}
\one_A = \tilde g - \sum_{w \in W} \varphi_w.
\end{equation}
\end{definition}

\begin{theorem}[Structure theorem in $\Z^2$]\label{structure-thm-2} Let $f \in \ell^\infty(\Z^2,\Z)_{\c} \backslash \{0\}$, $a \in \ell^\infty(\Z^2,\Z)$, and $g \in \ell^\infty(\Z^2,\Z)_{\p}$ be such that $f*a=g$.  
\begin{itemize}
    \item[(i)]  (Structure theorem) There exists a structured solution $(a, \tilde g, W, (\varphi_w)_{w \in W})$ to $f*a=g$ with the given choice of $a$, obeying the additional property $\tilde g \in \ell^\infty(\Z^2,\Q)_{\p}$. 
    \item[(ii)]  (Slicing lemma) If $(a, \tilde g, W, (\varphi_w)_{w \in W})$ is a structured solution to $f*a=g$, then for any $w_0 \in W$ and any coset $x+\langle w_0 \rangle$ of the cyclic group $\langle w_0 \rangle$ generated by $w_0$, the function $(\one_{x + \langle w_0 \rangle} f) * \varphi_{w_0}$ is periodic.
\end{itemize}
\end{theorem}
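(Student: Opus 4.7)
For part (i), the plan is to apply Theorem~\ref{structure-thm} directly with $G = \Z^2$ (so $d = 2$ and $H$ trivial), and to rearrange the output to solve for $a$. Since $H$ is trivial, for any choice of $v_0 \in V$ (possible because $f \neq 0$), the function $f_{v_0} = f(v_0)\one_{\{v_0\}}$ is a point mass, so $f_{v_0} * a = f(v_0)\, a(\cdot - v_0)$ and the conclusion of Theorem~\ref{structure-thm} reads
\[
f(v_0)\, a(\cdot - v_0) = g - \sum_{w \in W_{v_0}} \varphi_{v_0, w}.
\]
Translating by $v_0$ and dividing by the nonzero integer $f(v_0)$ then solves for $a$ in the required form $a = \tilde g - \sum_{w \in W} \varphi_w$ with $\tilde g(y) \coloneqq g(y+v_0)/f(v_0)$, $W \coloneqq W_{v_0}$, and $\varphi_w(y) \coloneqq \varphi_{v_0,w}(y+v_0)/f(v_0)$. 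The rationality $\tilde g \in \ell^\infty(\Z^2,\Q)_\p$ is immediate because $g$ is integer-valued and periodic, and each $\varphi_w$ inherits $qw$-periodicity from $\varphi_{v_0,w}$ under the shift.

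For part (ii), the plan starts from the identity
\[
\sum_{w \in W} f * \varphi_w = f * \tilde g - g
\]
obtained by substituting \eqref{a-rep} into \eqref{fag-eq}; its right-hand side is periodic. Writing $u_c \coloneqq (\one_c f) * \varphi_{w_0}$ for each coset $c = x + \langle w_0 \rangle$, each $u_c$ is automatically $qw_0$-periodic, so the goal is to show it is also periodic in a transverse direction. I would combine the identity above with the dilation lemma, which gives $(\tau_r f - f) * a = 0$ for all $r \equiv 1 \pmod q$ with $q$ sufficiently divisible. Substituting the representation of $a$, and noting that $\tilde g$ is annihilated by such differences (for $q$ a multiple of its period) and that each $\varphi_w$ is invariant under $\tau_r$ acting on points that lie along direction $w$ (by $qw$-periodicity), produces a family of linear relations, parameterized by $r$, among the sliced convolutions $(\one_c f) * \varphi_{w'}$ across cosets. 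Averaging these relations using the projection operators $\pi_v$ in directions transverse to $w_0$---which commute with convolutions and preserve partial periodicities---progressively eliminates the contribution of $\varphi_{w'}$ for $w' \neq w_0$, leaving periodicity of each $u_c$ in a transverse direction. Combined with its $qw_0$-periodicity, this forces $u_c$ to be fully periodic.

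The main obstacle will be the slicing upgrade in (ii). The easier unsliced analogue---periodicity of $f * \varphi_{w_0}$ itself---follows cleanly by applying the finite-difference operator $\prod_{w \in W \setminus \{w_0\}} \partial_{qw}$ to the key identity to annihilate the other terms, and then using boundedness of $f * \varphi_{w_0}$ together with its $qw_0$-periodicity in an integration-type argument that rules out linear drift. Passing from this unsliced statement to periodicity of each individual coset contribution requires genuinely using the dilation lemma to permute cosets of $\langle w_0 \rangle$; an induction on $|W|$ (which is at most $2$ in $\Z^2$, by linear independence) should organize the bookkeeping, with the boundedness of all functions involved controlling any drifts that arise when integrating the resulting difference equations.
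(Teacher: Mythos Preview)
Your argument for part (i) is correct and matches the paper's proof exactly.

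For part (ii), you have the right ingredients (dilation lemma plus averaging via the projections $\pi_v$), but there is a concrete error and a missing mechanism. First, the error: your claim that $|W| \leq 2$ ``by linear independence'' is wrong. In this paper ``linearly independent primitive elements'' means \emph{pairwise} linearly independent (no two parallel); $W$ can and does have more than two elements (see Corollary~\ref{moment-consequences}(ii), where three-element subsets of $W$ appear). So an induction on $|W|$ capped at two will not organize the argument.

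Second, the actual averaging scheme is more specific than ``projections in directions transverse to $w_0$.'' The paper's proof runs as follows: write $f = f_0 + \sum_j f_j$ with $f_0 = \one_{\langle w_0\rangle} f$ and $f_j$ the restrictions to the other cosets $x_j + \langle w_0\rangle$. From the dilated equation $(\tau_{1+qr} f)*a = g$ one substitutes \eqref{a-rep} and uses $qw_0$-periodicity of $\varphi_{w_0}$ to get $(\tau_{1+qr} f_0)*\varphi_{w_0} = f_0*\varphi_{w_0}$ and $(\tau_{1+qr} f_j)*\varphi_{w_0} = \one_{\{qrx_j\}}*f_j*\varphi_{w_0}$. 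Now the \emph{first} projection is $\pi_{qw_0}$---in the direction of $w_0$, not transverse to it. This fixes the left-hand side (which is $qw_0$-periodic) while turning each $\pi_{qw_0}\varphi_w$, $w \neq w_0$, into a genuinely periodic function (since $qw_0$ and $qw$ generate a finite-index subgroup). Only \emph{after} this does one average over the dilation parameter $r$, applying $\pi_{qx_j}$ (now in directions transverse to $w_0$) to the surviving terms $\one_{\{qrx_j\}}*f_j*\varphi_{w_0}$; this renders each $f_j*\pi_{qx_j}\varphi_{w_0}$ periodic and isolates $f_0*\varphi_{w_0}$ as a periodic function. Your sketch conflates these two stages and has the first projection pointing the wrong way; and the detour through an ``unsliced'' version followed by induction is not needed.
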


\begin{proof} We begin with (i).  From Theorem \ref{structure-thm} applied to the case $H=\{0\}$ and some $v_0\in\Z^2$ with $f(v_0)$ non-zero, translating by $-v_0$, and then dividing by $f(v_0)$, we have
\begin{equation}\label{ag}
 a = \frac{1}{f(v_0)} g - \sum_{w \in W} \varphi_w
\end{equation}
where $W$ is a finite set of linearly independent primitive elements $w$ of $\Z^2$, and each $\varphi_w \in \ell^\infty(\Z^2,\R)$ is $qw$-periodic for sufficiently divisible $q$, which we can take to be independent of $w$ by taking least common multiples.  This gives (i).

Now we show (ii).  Applying a translation, we may assume without loss of generality that $x=0$.

We split $f = f_0 + \sum_{j=1}^J f_j$, where $f_0$ is the restriction of $f$ to $\langle w_0\rangle$, $J$ is finite, and each $f_j$ is the restriction of $f$ to some non-identity coset $x_j + \langle w_0 \rangle$.
Now we adapt the proof of \cite[Lemma 5.1]{GT21}.  Let $q$ be sufficiently divisible, then we see from Theorem \ref{dil} that $(\tau_{1+qr} f) * a = g$ for all $r \geq 1$.  Inserting \eqref{a-rep} and rearranging, we conclude that
$$ (\tau_{1+qr} f_0) * \varphi_{w_0}
= (\tau_{1+qr} f_0) * \tilde g - g - \sum_{j=1}^J (\tau_{1+qr} f_{j}) * \varphi_{w_0} - \sum_{j=0}^J \sum_{w \in W \backslash \{w_0\}} (\tau_{1+qr} f_{j}) * \varphi_w.$$
By the $qw_0$-periodicity of $\varphi_{w_0}$, we have
$$ (\tau_{1+qr} f_0) * \varphi_{w_0} = f_0 * \varphi_{w_0}$$
and
$$ (\tau_{1+qr} f_j) * \varphi_{w_0} = \one_{\{qrx_j\}} * f_j * \varphi_{w_0}$$
Applying $\pi_{qw_0}$, we then conclude
$$ f_0 * \varphi_{w_0}
=  (\tau_{1+qr} f_0) * \pi_{qw_0} \tilde g - \pi_{qw_0} g - \sum_{j=1}^J \one_{\{qrx_j\}} * f_j * \varphi_{w_0} - \sum_{j=0}^J\sum_{w \in W \backslash \{w_0\}} (\tau_{1+qr} f_{j}) * \pi_{qw_0} \varphi_w.$$
In two dimensions, $qw_0$ and $qw$ generate a finite index subgroup of $\Z^2$ for any $w \in W \backslash \{w_0\}$, so $\pi_{qw_0} \varphi_w$ is periodic.  Since $\tilde g$ and $g$ were also periodic, we conclude that
$$ f_0 * \varphi_{w_0}
= g^{(r)} - \sum_{j=1}^J \one_{\{qrx_j\}} * f_j * \varphi_{w_0}
$$
for some $g^{(r)} \in \ell^\infty(\Z^2,\R)_{\p}$, where the period of $g^{(r)}$ can be taken to be uniform in $r$.  Averaging over $r$ and taking generalized limits, we conclude that
$$ f_0 * \varphi_{w_0}
= g' - \sum_{j=1}^J  f_j * \pi_{qx_j} \varphi_{w_0}
$$
for some $g' \in \ell^\infty(\Z^2,\R)_{\p}$.  As $x_j + \langle w_0 \rangle$ is a non-identity coset, $qx_j$ and $qw_0$ generate a finite index subgroup of $\Z^2$, thus $\pi_{qx_j} \varphi_{w_0}$ also lies in $\ell^\infty(\Z^2,\R)_{\p}$ for each $j$.  Thus we have $f_0 * \varphi_{w_0} \in \ell^\infty(\Z^2,\R)_{\p}$ as claimed.
\end{proof}

\section{Solving \texorpdfstring{$f*a=0$}{f*a=0} in the integers}\label{integer-sec}

In this section we provide a proof, due to Tim Austin \cite{T23}, of Theorem \ref{main-1}.  It is clear that (ii) implies (i).  If (ii) holds, then $f*a_{\p}=0$ for some $a_{\p} \in \ell^\infty(G,\Z)_{\p} \backslash \{0\}$.  The distributional Fourier transform of the periodic function $a_{\p}$ is supported on a finite non-empty set of characters in $\hat G$, each of which is of finite order, thus $\hat f$ (which is a trigonometric polynomial, and hence smooth, on $\hat G$) must vanish on at least one of these characters, giving (iii).  Conversely, if (iii) holds, then we have $f * \chi = 0$ for some character $\chi \in \hat G$ which is of finite order, and hence periodic.  We arbitrarily select a retraction homomorphism $\psi \colon \C \to \Q$ from the complex numbers to the rationals.  Applying this retraction to $f * \chi = 0$, we obtain $f * (\psi \circ \chi) = 0$.  Since $\psi \circ \chi$ lies in $\ell^\infty(G,\Q)_{\p}$, by clearing denominators we have $M (\psi \circ \chi) \in \ell^\infty(G,\Z)_{\p}$ for some positive integer $M$.  Since $\psi \circ \chi$ is non-zero at the origin, we obtain (ii) by setting $a_{\p} \coloneqq M (\psi \circ \chi)$.

It remains to show that (i) implies (ii).  Without loss of generality we can write $G = \Z^d \times H$ for some dimension $d \geq 0$ and finite group $H$; we view $\Z^d$ as a subgroup of $G$.  The case $d=0$ is trivial, since $\ell^\infty(G,\Z) = \ell^\infty(G,\Z)_{\p}$ in this case, so now we assume inductively that $d \geq 1$ and that the claim has already been proven for groups of dimension $d-1$.

Suppose that $f*a = 0$ for some $a \in \ell^\infty(G,\Z) \backslash \{0\}$ for which $\partial_w a = 0$ for some non-zero $w \in \Z^d$.  Then $a$ is invariant on cosets of the cyclic group $\langle w \rangle$ generated by $v$, and thus descends to a non-zero element $\tilde a \in \ell^\infty(G/\langle w \rangle, \Z)$ on the quotient group $G/\langle w \rangle$, which has dimension $d-1$.  We can define the pushforward $\tilde f \in \ell^\infty(G/\langle w \rangle, \Z)_{\c}$ of $f$ by the formula
$$ \tilde f(x + \langle w \rangle) \coloneqq \sum_{y \in x + \langle w \rangle} f(y);$$
the identity $f*a=0$ then pushes forward to $\tilde f * \tilde a = 0$.  By induction hypothesis, we can then find a non-zero $\tilde a_{\p} \in \ell^\infty(G/\langle w \rangle, \Z)_{\p}$ such that $\tilde f * \tilde a_{\p} = 0$.  The non-zero periodic function $\tilde a_{\p}$ then pulls back to a non-zero periodic function $a_{\p} \in \ell^\infty(G, \Z)_{\p}$, and the identity $\tilde f * \tilde a_{\p} = 0$ pulls back to $f * a_{\p} = 0$, giving the desired claim (ii) in this case.

More generally, suppose that  $f*a = 0$ for some $a \in \ell^\infty(G,\Z) \backslash \{0\}$ for which $\partial_{w_1} \dots \partial_{w_m} a = 0$ for some non-zero $w_1,\dots,w_m \in \Z^d$.  Then there must exist $0 \leq i < m$ such that $\partial_{w_1} \dots \partial_{w_i} a$ is not identically zero, but $\partial_{w_1} \dots \partial_{w_{i+1}}  a$ is.  This now places us in the previous situation (with $a$ replaced by $\partial_{w_1} \dots \partial_{w_i} a$, and $w$ replaced by $w_{i+1}$), so we again have (ii) in this case.

The last remaining case is when $f*a = 0$ for some $a \in \ell^\infty(G,\Z)$ with the property that $\partial_{w_1} \dots \partial_{w_m} a$ does not vanish identically for any non-zero $w_1,\dots,w_m \in \Z^d$.  We now split $f$ as $f = \sum_{v \in V} f_v$ as in Theorem \ref{structure-thm}.  For every $v_0 \in V$, this theorem gives us a decomposition
$$ f_{v_0} * a = g - \sum_{w \in W_{v_0}} \varphi_{v_0,w}.$$
Each of the terms in the right-hand side is periodic in some direction, and thus annihilated by some $\partial_w$ for some non-zero $w \in \Z^d$.  Collecting all these directions together, we may find some finite collection $w_1,\dots,w_m \in \Z^d$ of non-zero vectors such that
$$ \partial_{w_1} \dots \partial_{w_m} (f_{v_0} * a) = 0$$
for all $v_0 \in V$.  Setting $\tilde a \coloneqq \partial_{w_1} \dots \partial_{w_m} a$, we conclude that
$$ f_{v_0} * \tilde a = 0$$
for all $v_0 \in V$.

By hypothesis, $\tilde a$ is not identically zero, thus $\tilde a$ is not identically zero on $\{w_*\} \times H$ for some $w_* \in \Z^d$.  This set is a fundamental domain for $G/\Z^d$.  If we then define
$$ a_\p \coloneqq \tilde a \circ \Pi_{\{w_*\} \times H, \Z^d} $$
then $a_{\p}$ is a $\Z^d$-periodic element of $\ell^\infty(G,\Z)_{\p} \backslash \{0\}$ with $f_{v_0} * a_\p$ agreeing with $f_{v_0} * \tilde a = 0$ on $v_0 + (\{w_*\} \times H)$, and hence on all of $\Z^d \times H$ by $\Z^d$-periodicity, for all $v_0 \in V$.  Summing in $v_0$, we conclude that $f*a_{\p}=0$, and so we obtain (ii) as desired.

\section{Solvability of $f*a=0$ in the integers is decidable}\label{sec:decidability}

In this section we use Theorem \ref{main-1} to prove Corollary \ref{cor:decidability-1}. 

We first need some preliminaries on roots of unity, and specifically when a finite number of such roots sum to zero.  For any $\xi \in \Q/\Z$, let $e(\xi)$ denote the root of unity $e(\xi) \coloneqq e^{2\pi i \xi}$.  For $(\xi_1,\dots,\xi_n) \in (\Q/\Z)^n$, we let $P_n(\xi_1,\dots,\xi_n)$ denote the predicate
\begin{equation}\label{eq:sum0}
        e(\xi_1)+\dots+e(\xi_n)=0.
\end{equation}
Thus for instance $P_n( \frac{0}{n} \mod 1, \frac{1}{n} \mod 1, \dots, \frac{n-1}{n} \mod 1)$ holds.  We define a \emph{rotation} of a tuple $(\xi_1,\dots,\xi_n) \in (\Q/\Z)^n$ to be a tuple of the form $(\xi_1+\xi,\dots,\xi_n+\xi)$ for some $\xi \in \Q/\Z$.  Clearly, $P_n$ is rotation-invariant: if a tuple $(\xi_1,\dots,\xi_n)$ obeys $P_n$, then so do all rotations of $(\xi_1,\dots,\xi_n)$.

A solution $(\xi_1,\dots,\xi_n) \in (\Q/\Z)^n$ to \eqref{eq:sum0} will be called \emph{minimal} if there is no proper subcollection of the $e(\xi_1),\dots,e(\xi_n)$ that sum to zero, thus $\sum_{j \in I} e(\xi_j) \neq 0$ for all proper subsets $\emptyset \subsetneq I \subsetneq \{1,\dots,n\}$ of $\{1,\dots,n\}$.  We let $P_{n,\min}(\xi_1,\dots,\xi_n)$ denote the assertion that $(\xi_1,\dots,\xi_n)$ is a minimal solution to \eqref{eq:sum0}.  This predicate is also clearly rotation-invariant.  We recall the following structural result about minimal solutions:

\begin{lemma}\label{ll-lem} If $P_{n,\min}(\xi_1,\dots,\xi_n)$ holds, then after applying a rotation, there exists a squarefree number $M$ such that $(\xi_1,\dots,\xi_n)$ has order dividing $M$ (i.e., $M\xi_j=0$ for all $1 \leq j\leq n$, or equivalently the $e(\xi_j)$ are all $M^{\mathrm{th}}$ roots of unity).  In fact one can take $M$ to be the product of all the primes less than or equal to $n$.
\end{lemma}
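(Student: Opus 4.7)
The plan is to rotate to minimize the order of the tuple and then show this minimum is already small. Since the subgroup $L \coloneqq \langle \xi_i - \xi_j : 1 \leq i, j \leq n \rangle$ of $\Q/\Z$ is rotation-invariant, the minimum order attainable by rotation is $N \coloneqq |L|$, realized by any rotation sending some $\xi_j$ into $L$ (e.g., subtracting $\xi_1$ from every entry). After fixing such a rotation, we must show $N \mid M$; equivalently, that $N$ is squarefree with all prime divisors at most $n$.

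Suppose for contradiction that some prime power $p^a \mid N$ with either $a \geq 2$, or $a = 1$ and $p > n$. Writing $N = p^a N'$ with $\gcd(p,N')=1$, setting $M' \coloneqq N/p = p^{a-1} N'$, and decomposing $b_j = c_j + p e_j$ with $c_j \in \{0,\dots,p-1\}$, the Chinese remainder theorem lets us express
$$ e(\xi_j) \;=\; \zeta_{p^a}^{b_j} \zeta_{N'}^{a'_j} \;=\; \zeta_{p^a}^{c_j}\, \zeta_{M'}^{s_j} $$
for a unique $s_j \in \Z/M'\Z$, since $\zeta_{p^a}^{pe_j} = \zeta_{p^{a-1}}^{e_j}$ and $\gcd(p^{a-1},N')=1$. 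Grouping the vanishing sum by the residue $c_j$ yields
$$ \sum_{c=0}^{p-1} \zeta_{p^a}^c\, T_c \;=\; 0, \qquad T_c \coloneqq \sum_{j : c_j = c} \zeta_{M'}^{s_j} \in \Z[\zeta_{M'}]. $$

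The crux is deducing $T_c = 0$ for every $c$. If $a \geq 2$, then $[\Q(\zeta_N) : \Q(\zeta_{M'})] = p$ and the minimal polynomial of $\zeta_{p^a}$ over $\Q(\zeta_{M'})$ is $x^p - \zeta_{p^{a-1}}$, so $\{1, \zeta_{p^a}, \dots, \zeta_{p^a}^{p-1}\}$ is a $\Q(\zeta_{M'})$-basis of $\Q(\zeta_N)$ and every $T_c$ vanishes immediately. If $a = 1$, the minimal polynomial of $\zeta_p$ over $\Q(\zeta_{N'})$ is $\Phi_p$ of degree $p-1$, so the same computation only gives the weaker conclusion $T_0 = T_1 = \dots = T_{p-1}$; here we invoke the hypothesis $p > n$ and pigeonhole to find a residue class $c$ with $\{j : c_j = c\}$ empty, forcing that $T_c$ (and hence all of them) to vanish.

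Once every $T_c = 0$, each nonempty residue class contributes a vanishing subsum $\sum_{j : c_j = c} e(\xi_j) = \zeta_{p^a}^c T_c = 0$ of the original. If at least two residue classes are occupied, this is a proper nonempty vanishing subsum, contradicting $P_{n,\min}$. Otherwise all $c_j$ coincide with some common $c_0$, and rotating further by $-c_0/p^a$ replaces each $e(\xi_j)$ by $\zeta_{M'}^{s_j}$; the rotated tuple generates a subgroup of $\Q/\Z$ that contains $L$ yet has order dividing $M' = N/p < N$, contradicting the definition of $N$. The main subtlety worth flagging is the asymmetry between the two subcases: the non-squarefree case $a \geq 2$ collapses directly via the degree-$p$ cyclotomic basis, whereas the case $a = 1$ genuinely needs the hypothesis $p > n$ to upgrade the weaker relation $T_0 = \dots = T_{p-1}$ into outright vanishing.
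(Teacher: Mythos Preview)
Your proof is correct. The paper itself does not give a self-contained argument: it simply cites Mann's theorem \cite{mann} (with alternate references to Conway--Jones \cite{conway-jones} and Lam--Leung \cite{LL}) and moves on. What you have written is essentially the classical cyclotomic-field proof of Mann's theorem found in those references: decompose via the Chinese remainder theorem, use that $\{1,\zeta_{p^a},\dots,\zeta_{p^a}^{p-1}\}$ is linearly independent over $\Q(\zeta_{M'})$ when $a\geq 2$ (and nearly so, modulo the single relation $\Phi_p=0$, when $a=1$), and finish by minimality. Your framing in terms of the rotation-invariant subgroup $L=\langle \xi_i-\xi_j\rangle$ and its order $N$ is a clean way to organize the induction; the only difference from the cited proofs is cosmetic packaging. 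So your approach is genuinely different from the paper's (which is a bare citation), but coincides with the standard proof that the paper is citing.
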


\begin{proof} This follows from \cite[Theorem 1]{mann}\footnote{In \cite[Theorem 5]{conway-jones} an improved bound on $M$ was found: for any $\xi_1,\dots,\xi_n$, one could select $M$ so that $\sum_{p|M} (p-2) \leq n-2$. As noted in that paper, this allows $M$ to be of size $\exp(O(n^{1/2} \log n))$ rather than $\exp(O(n))$. However, for our purposes the only thing that is important is that $M$ can be bounded by an explicitly computable quantity depending only on $n$.}.
Alternate proofs of the first claim may be found in \cite[Theorem 1]{conway-jones} or \cite[Corollary 3.2]{LL}.
\end{proof}

\begin{corollary}\label{chi-decide}  Let $n$ be  natural number.  Then the predicates $P_n(\xi_1,\dots,\xi_n)$ and $P_{n,\min}(\xi_1,\dots,\xi_n)$ are expressible (in a computable fashion) in the first-order language of $\Q/\Z$ as an Abelian group (in which the elements of $\Q/\Z$ are constants, and the operations are $+$ and $-$).
\end{corollary}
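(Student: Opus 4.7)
The plan is to make the formulas for $P_n$ and $P_{n,\min}$ fully explicit, using Lemma \ref{ll-lem} to reduce the minimal predicate to a finite enumerable data set, and then reducing the general predicate to a finite disjunction over set-partitions whose blocks are each minimal.

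First I would handle $P_{n,\min}$. Let $M = M(n)$ denote the product of all primes up to $n$, which is computable from $n$. Since $\tfrac{1}{M}\Z/\Z$ is a finite subgroup of $\Q/\Z$, the set
$$
\S_{n,\min} \coloneqq \{(\eta_1,\dots,\eta_n) \in (\tfrac{1}{M}\Z/\Z)^n : P_{n,\min}(\eta_1,\dots,\eta_n)\}
$$
is finite and can be enumerated effectively: deciding whether a sum of $M^{\text{th}}$ roots of unity vanishes reduces to integer arithmetic in the cyclotomic ring $\Z[\zeta_M]$ via the $\Z$-basis $\{1,\zeta_M,\dots,\zeta_M^{\varphi(M)-1}\}$, and the minimality condition is then a finite conjunction of such non-vanishing checks over the proper non-empty subsets of $\{1,\dots,n\}$. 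By Lemma \ref{ll-lem} combined with the rotation-invariance of $P_{n,\min}$, a tuple $(\xi_1,\dots,\xi_n) \in (\Q/\Z)^n$ satisfies $P_{n,\min}$ if and only if some rotation of it belongs to $\S_{n,\min}$, which I would express by the computable first-order formula
$$
\bigvee_{(\eta_1,\dots,\eta_n) \in \S_{n,\min}} \; \exists \xi \in \Q/\Z : \; \bigwedge_{j=1}^n (\xi_j + \xi = \eta_j).
$$

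Next I would reduce $P_n$ to $P_{n,\min}$. Any vanishing sum of roots of unity admits a partition of its index set into minimal vanishing sub-sums: repeatedly extract a minimal vanishing sub-sum (which exists, since any non-empty vanishing sum is either already minimal or contains a proper non-empty vanishing sub-sum of smallest cardinality) and recurse on the complement. Since the set of set-partitions of $\{1,\dots,n\}$ is finite and computable, this gives the equivalence
$$
P_n(\xi_1,\dots,\xi_n) \iff \bigvee_{\{I_1,\dots,I_k\} \,\vdash\, \{1,\dots,n\}} \; \bigwedge_{\ell=1}^k P_{|I_\ell|,\min}\bigl((\xi_j)_{j \in I_\ell}\bigr),
$$
where $\vdash$ denotes set-partition. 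Substituting the formula from the previous step for each $P_{|I_\ell|,\min}$ yields the desired computable first-order expression for $P_n$.

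The main obstacle is essentially only bookkeeping: once Lemma \ref{ll-lem} is in hand, the argument is a combinatorial unpacking. The only technical point that needs verification is the effective decidability of whether a given sum of $M^{\text{th}}$ roots of unity vanishes, which is standard via linear algebra in the cyclotomic ring $\Z[\zeta_M]$.
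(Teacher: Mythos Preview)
Your proposal is correct and follows essentially the same approach as the paper: the paper's set $\Omega_n$ is your $\S_{n,\min}$, and both arguments express $P_{n,\min}$ as ``some rotation lands in this finite computable set'' and then obtain $P_n$ by a finite disjunction over partitions into minimal blocks. The only differences are cosmetic---you reverse the order of the two reductions and give slightly more detail on why $\S_{n,\min}$ is effectively computable.
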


\begin{proof}  A tuple $(\xi_1,\dots,\xi_n)$ solves \eqref{eq:sum0} if and only if it can be partitioned into minimal tuples.  The number of ways to perform such a partition is clearly finite for a given $n$. Thus, in order to show that the predicates $P_n(\xi_1,\dots,\xi_n)$ are expressible in the first-order language of $\Q/\Z$, it suffices to do so for $P_{n,\min}(\xi_1,\dots,\xi_n)$.

Let $M$ be the product of all the primes less than or equal to $n$, and let $\Omega_n$ be the set of all tuples $(\xi_1,\dots,\xi_n)$ obeying $P_{n,\min}$ that are of order dividing $M$.  Clearly $\Omega_n$ is a computable finite set.  By Lemma \ref{ll-lem}, $P_{n,\min}(\xi_1,\dots,\xi_n)$ holds if and only if there exists $\xi \in \Q/\Z$ such that $(\xi_1+\xi,\dots,\xi_n+\xi) \in \Omega_n$.  This is clearly a (computable) predicate in the first-order language of $\Q/\Z$.
\end{proof}

\begin{remark} This corollary can also be obtained from \cite[Theorem 3]{conway-jones} (which, in the words of the authors, ``completes the proof... that trigonometric diophantine equations reduce to ordinary ones''). Explicit descriptions of $\Omega_n$ (and thus $P_{n,\min}$ and $P_n$) are known for some values of $n$; for instance, the cases $n \leq 7$ are treated in \cite{mann}, $n \leq 9$ in \cite{conway-jones}, and $n=12$ in  \cite[Section 3]{PR}.  On the other hand, the structure of minimal sums can get quite complicated when $n$ has many prime factors; see e.g., \cite{Steinberger}.
\end{remark}

The significance of this fact for the purpose of decidability arises from the following result of Wanda Szmielew \cite{Szmielew}.

\begin{theorem}\label{sz} The first-order theory of $\Q/\Z$ is decidable: there is a procedure which determines, in finite time, whether any given first-order sentence in this language is true.
\end{theorem}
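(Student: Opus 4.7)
\textbf{Proof plan for Theorem \ref{sz}.} The plan is to prove Szmielew's decidability theorem for $\Q/\Z$ by quantifier elimination in a suitably expanded first-order language, followed by an effective evaluation of the resulting quantifier-free sentences.

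First, I would pass to the expanded language $\mathcal{L}^+$ obtained from the abelian group language $\{0,+,-\}$ by adjoining, for each positive integer $n$, the unary predicates $D_n(x) \equiv (\exists y)(ny=x)$ (``$x$ is $n$-divisible'') and $E_n(x) \equiv (nx=0)$ (``$x$ has order dividing $n$''), together with a constant symbol naming each element of $\Q/\Z$. The two structural features of $\Q/\Z$ that make the analysis tractable are its divisibility (so each $D_n$ is trivially true in $\Q/\Z$) and its canonical decomposition $\Q/\Z \cong \bigoplus_p \Z(p^\infty)$ as a direct sum of Pr\"ufer groups.

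The heart of the plan is to establish quantifier elimination for the theory of $\Q/\Z$ in $\mathcal{L}^+$. I would argue by induction on quantifier depth, reducing to the case of a single existential $\exists y\, \psi(\bar x, y)$ where $\psi$ is a finite conjunction of atomic and negated atomic $\mathcal{L}^+$-formulas. Collecting the finitely many integer denominators appearing in the constants and coefficients of $\psi$, I would decompose the putative witness as $y = \sum_p y_p$ into $p$-primary components and solve each $p$-local subproblem independently inside the Pr\"ufer group $\Z(p^\infty)$. Since $\Z(p^\infty)$ has an exceptionally simple subgroup lattice (one cyclic subgroup of order $p^k$ at each level $k$), the existence of such a $y_p$ satisfying the prescribed system of divisibility, order and linear constraints can be recast as a quantifier-free condition on $\bar x$ by an explicit finite case split on the $p$-adic valuations of the $y_p$. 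For primes $p$ not dividing any denominator appearing in $\psi$, no nontrivial constraint is imposed and a witness is automatic; so only finitely many primes need to be examined.

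Once quantifier elimination is established, any closed sentence of the theory of $\Q/\Z$ reduces effectively to a boolean combination of atomic closed formulas $D_n(c)$, $E_n(c)$, or $c=0$, where $c \in \Q/\Z$ is an explicit $\Z$-linear combination of the named constants. Each such atom is trivially decidable from the reduced-fraction representation of $c$, which completes the algorithm. The main obstacle is the quantifier elimination step itself: although the $p$-local analysis inside $\Z(p^\infty)$ is conceptually transparent, one must be careful to perform it \emph{uniformly and effectively} in the parameters $\bar x$, so that the elimination procedure produces an algorithm rather than a merely non-constructive existence statement, and so that the book-keeping of denominators across different primes is handled by a single terminating computation.
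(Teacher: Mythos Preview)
Your plan is sound in outline, but it does vastly more than the paper: the paper's proof of Theorem \ref{sz} is a one-line citation of Szmielew's general decidability theorem for abelian groups, with no attempt to reprove it. You are instead sketching how one would establish that result for the particular group $\Q/\Z$ from first principles.

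The route you propose---quantifier elimination in a language expanded by torsion and divisibility predicates, exploiting the Pr\"ufer decomposition $\Q/\Z \cong \bigoplus_p \Z(p^\infty)$---is indeed how such theorems are proved, and the simplifications you identify (divisibility makes every $D_n$ vacuous; each $\Z(p^\infty)$ has a linearly ordered subgroup lattice) are genuine. What the paper's citation buys is brevity; what your approach buys is self-containment and some insight into \emph{why} the theory is tame. One point that would need more care in an actual proof: when eliminating $\exists y$ relative to free parameters $\bar x$, the $p$-primary components of the $x_i$ are unknown, so your claim that ``only finitely many primes need to be examined'' requires showing that for primes $p$ not dividing any coefficient or constant in $\psi$, the projected $p$-local system in $y_p$ is solvable \emph{uniformly in $(x_i)_p$}, i.e.\ the eliminated condition on $\bar x$ is vacuous at such primes. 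This is true (integer coefficients prime to $p$ act invertibly on $\Z(p^\infty)$, and divisibility supplies witnesses), but it is where the real bookkeeping lives---as you yourself flag in your final paragraph.
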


\begin{proof}  This follows from Szmielew's theorem\footnote{We acknowledge ChatGPT for the assistance in finding this reference.} \cite{Szmielew} on the decidability of the theory of divisible Abelian groups.
\end{proof}

Combining this with Theorem \ref{main-1} and Corollary \ref{chi-decide}, we can now prove Corollary \ref{cor:decidability-1}. 

\begin{proof}[Proof of Corollary \ref{cor:decidability-1}]  By the classification of finitely generated Abelian groups, we can express $G$ explicitly as
$$ G = \Z^d \times \prod_{m=d+1}^r \Z/N_m \Z$$
for some $0 \leq d \leq r$ and some $N_{d+1},\dots,N_r \geq 1$.

Fix $f \in \ell^\infty(G,\Z)_{\c}$.  By Theorem \ref{main-1}, there is a non-zero $a\in \ell^\infty(G,\Z)$ such that $f*a=0$ if and only if there exists $\chi\in \hat G$ of finite order such that $\hat{f}(\chi)=0$.  By Theorem \ref{sz}, it then suffices to show that this latter claim can be expressed (in a computable fashion) as a first-order sentence in the language of $\Q/\Z$.

We express $f$ as a signed sum of Kronecker delta functions
$$ f = \sum_{j=1}^n e(\epsilon_j) \one_{\{g_j\}},$$
where $\epsilon_j \in \{0 \mod 1,1/2 \mod 1\}$ and $g_j \in G$ for $j=1,\dots,n$.  Thus, for any character $\chi$,
$$ \hat f(\chi) = \sum_{j=1}^n e(\epsilon_j) \overline{\chi(g_j)}.$$
When $\chi$ is finite order, the $\chi(g_j)$ are roots of unity, although not completely unconstrained due to relations between the $g_j$, as well as torsion in the group $G$.
Nevertheless, we may conclude that the assertion that there exists $\chi\in \hat G$ of finite order such that   $\hat{f}(\chi)=0$ is equivalent to the predicate
$$ \exists \xi_1,\dots,\xi_n \in (\Q/\Z)^n: P_n(\xi_1,\dots,\xi_n) \wedge Q_n(\xi_1,\dots,\xi_n),$$
where $P_n$ is the predicate defined above, and $Q_n(\xi_1,\dots,\xi_n)$ is the predicate that there exists $\chi \in \hat G$ such that $e(\xi_j) = e(\epsilon_j) \overline{\chi(g_j)}$ for all $j=1,\dots,n$.  In view of Corollary \ref{chi-decide}, it suffices to show that $Q_n(\xi_1,\dots,\xi_n)$ is equivalent to a (computable) predicate in the first-order language of $\Q/\Z$.

Let $\alpha_1,\dots,  \alpha_r$ be the standard generators of $G= \Z^d \times \prod_{m=d+1}^r \Z/N_m \Z$, where, in particular, $\alpha_1,\dots,\alpha_d$ is the standard basis of $\Z^d$.  Then we can express each of the $g_j$, $j=1,\dots,n$, in terms of these generators as
$$ g_j =\sum_{m=1}^r b_{j,m}\alpha_m$$
for some integers $b_{j,m}$.   Then we have
$$ \overline{\chi(g_j)} = e(- \sum_{m=1}^r b_{j,m} \eta_m )$$
if $\eta_m$ is the unique element of $\Q/\Z$ such that $\chi(\alpha_m) = e(\eta_m)$ for all $m=1,\dots,r$.  For $d+1 \leq m\leq r$, we have $\chi(N_m \alpha_m)=1$ and hence $N_m \eta_m = 0$.  Conversely, if $(\eta_m)_{m=1,\dots,r} \in (\Q/\Z)^r$ is such that $N_m \eta_m = 0$ for all $d+1 \leq m \leq r$, then there exists a finite order character $\chi$ such that $\chi(\alpha_m) = e(\eta_m)$ for all $m=1,\dots,r$.  We conclude that the predicate $Q_n(\xi_1,\dots,\xi_n)$ is equivalent to the predicate
$$ \exists \eta_1,\dots,\eta_r \in \Q/\Z: R(\xi_1,\dots,\xi_n,\eta_1,\dots,\eta_r) \wedge S(\eta_1,\dots,\eta_r)$$
where $R(\xi_1,\dots,\xi_n,\eta_1,\dots,\eta_r)$ is the assertion that
$$ \xi_j = \epsilon_j -   \sum_{m=1}^r b_{j,m} \eta_m$$
for all $1 \leq j \leq n$, and $S(\eta_1,\dots,\eta_r)$ is the assertion that
$$ N_m \eta_m = 0$$
for all $d+1 \leq m \leq r$.  This is clearly a computably expressible predicate in the first-order language of $\Q/\Z$ as an Abelian group, as desired. Thus, by Theorem \ref{sz}, one can algorithmically determine whether there is a non-zero $a\in \ell^\infty(G,\Z)$  such that $f*a=0$.
\end{proof}

\section{Integer-valued PTC in \texorpdfstring{$\Z^2$}{Z^2}}\label{integer-ptc}

We now prove Theorem \ref{main-2}.  Let $f,g$ be as in that theorem.  We may assume that $f$ is not identically zero, since the claim is trivial otherwise.

By hypothesis and Theorem \ref{structure-thm-2}(i), we have a structured solution $$(a, \tilde g, W, (\varphi_w)_{w \in W})$$ with $\tilde g \in \ell^\infty(\Z^2,\Q)_{\p}$.

Let $q$ be sufficiently divisible. The key issue here is that each of the $\varphi_w$ is only periodic in one direction $qw$, whereas we need periodicity in two directions to conclude the theorem.  In order to gain the extra periodicity we first modify the decomposition to be more ``rational''.  We arbitrarily select a retraction homomorphism $\psi \colon \R \to \Q$ from the reals to the rationals.  Since $a$ and $\tilde g$ already take values in $\Q$, we can thus apply $\psi$ to \eqref{a-rep} and conclude that
\begin{equation}\label{apw}
 a = \tilde g - \sum_{w \in W} \psi \circ \varphi_w.
 \end{equation}
The retraction homomorphism $\psi$ cannot possibly be continuous (or even measurable), so $\psi \circ \varphi_w$ will most likely be unbounded (so in particular would not lie in any of the spaces in Figure \ref{fig:inclusions}).  Nevertheless, we can still manipulate the $\psi \circ \varphi_w$ with convolutions by elements of $\ell^\infty(\Z^2,\Z)_{\c}$, and in particular with difference operators $\partial_h$.  For instance, since $\varphi_w$ is $qw$-periodic for sufficiently divisible $q$, $\psi \circ \varphi_w$ is also, thus $\psi \circ \varphi_w$ is annihilated by $\partial_{qw}$.  

Now let $w_0 \in W$.  Taking \eqref{apw} modulo one to eliminate the integer-valued function $a$, we see that
$$ \psi \circ \varphi_{w_0} = \tilde g - \sum_{w \in W \backslash \{w_0\}} \psi \circ \varphi_w \mod 1.$$
Every term on the right-hand side may be annihilated by some difference operator $\partial_h$ with $h \in \Z^2 \backslash \langle w_0\rangle$.  We conclude an identity of the form
\begin{equation}\label{phh} \partial_{h_1} \dots \partial_{h_l} (\psi \circ \varphi_{w_0}) = 0 \mod 1
\end{equation}
for some $h_1,\dots,h_l \in \Z^2 \backslash \langle w_0\rangle$ (depending on $w_0$).  As in \cite[Section 4]{GT21} (or \cite{BH}), this implies that $\psi \circ \varphi_{w_0}$ is piecewise polynomial modulo $1$; indeed, recalling that $w^*_0$ is a complementary vector to $w_0$, then (making $q$ more divisible as necessary), on each coset $x+q\Z^2$ 
the function
$$ (n,m) \mapsto \psi \circ \varphi_{w_0}( x + n qw_0 + m qw^*_0 ) \mod 1$$
for $n,m \in \Z$ is independent of $n$, and is annihilated by $\partial_{(0,1)}^l$, hence is equal to a polynomial $P_x(m)$ of degree at most $l-1$:
$$ \psi \circ \varphi_{w_0}( x + n qw_0 + m qw^*_0 ) = P_x(m) \mod 1.$$
The key point is that the retraction $\psi$ ensures that $\psi \circ \varphi_{w_0}$ takes only rational values, hence $P_x$ does also.  By the Lagrange interpolation formula, this means that all the coefficients of the polynomial $P_x$ are rational, which implies that $P_x$ is also periodic.  This implies that $\psi \circ \varphi_{w_0} \mod 1$ is periodic as well.  By making $q$ more divisible if necessary, we may assume that
$\psi \circ \varphi_{w_0} \mod 1$ is $q\Z^2$-periodic.

On the other hand, from Theorem \ref{structure-thm-2}(ii) we also know (making $q$ more divisible as needed) that for each coset $x + \langle w_0\rangle$ of $w_0$, the function
$(\one_{x + \langle w_0 \rangle} f) * \varphi_{w_0}$ is $q\Z^2$-periodic.  Since
$\one_{x + \langle w_0 \rangle} f$ is integer-valued, we may apply $\psi$ and conclude that $(\one_{x + \langle w_0 \rangle} f) * (\psi \circ \varphi_{w_0})$ is also $q\Z^2$-periodic.

We are now finally in a position to locate a periodic ``replacement'' for $\varphi_{w_0}$ for each $w_0 \in W$.  Namely, we define the function $\varphi_{w_0,\p} \in \ell^\infty(\Z^2,\Q)_{\p}$ by the formula
$$
\varphi_{w_0,\p} = \psi \circ \varphi_{w_0}\circ \Pi_{\Omega,q\Z^2},$$
where $\Omega$ is the fundamental domain for $\Z^2/q\Z^2$ defined by
$$ \Omega \coloneqq \{ aw_0 + bw^*_0: a,b \in [q] \}.$$
Clearly $\varphi_{w_0,\p}$ is $q\Z^2$-periodic; it agrees with $\psi \circ \varphi_{w_0,\p}$ not only on $\Omega$, but on the larger domain $\Omega' \coloneqq \{ aw_0 + bw^*_0: a \in \Z, b \in [q]\}$, because both functions are $qw_0$-periodic.  The identity
\begin{equation}\label{varphip}
 \varphi_{w_0,\p} = \psi \circ \varphi_{w_0} \mod 1
 \end{equation}
holds by construction at any point of $\Omega$ (or $\Omega'$), and hence on all of $\Z^2$ since both sides of the identity are $q\Z^2$-periodic modulo $1$.  In a similar vein, the identity
$$ (\one_{x + \langle w_0 \rangle} f) * \varphi_{w_0,\p} = (\one_{x + \langle w_0 \rangle} f) * (\psi \circ \varphi_{w_0})$$
holds by construction at any point of the form $x + \Omega'$, and hence on all of $\Z^2$ since both sides are $q\Z^2$-periodic. Summing in $x$, we conclude that
\begin{equation}\label{fvar}
f * \varphi_{w_0,\p} = f * (\psi \circ \varphi_{w_0}).
\end{equation}
Now set
\begin{equation}\label{ap-def}
a_\p \coloneqq \tilde g - \sum_{w \in W} \varphi_{w,\p}.
\end{equation}
Since $\tilde g$ and the $\varphi_{w,\p}$ are periodic, $a_\p$ is also.  From
\eqref{apw} and \eqref{varphip} we see that
$$ a_\p = a \mod 1;$$
thus, since $a$ is integer-valued, we conclude that $a_\p$ is also, which gives $a_\p \in \ell^\infty(\Z^2,\Z)_{\p}$.  By convolving \eqref{apw}, \eqref{ap-def} with $f$ and using \eqref{fvar}, we also have
$$ f * a_\p = f*a = g,$$
and the claim follows.

\section{Transforming real solutions to rational solutions}\label{rationalization-sec}

In this section we investigate questions of the following model-theoretic nature: if a given predicate $P(x_1,\dots,x_d)$ in $d$ variables can be satisfied with some real choice of variables $x_1,\dots,x_d \in \R$, can it also be satisfied with \emph{rational} choices $x_1, \dots, x_d \in \Q$ of variables?  We will informally refer to predicates with the above property as ``rationalizable'' predicates.  The property of being rationalizable will turn out to be central to the resolution of the periodic tiling conjecture at higher level in $\Z^2$ (Theorem \ref{main-3}), as the tilings we will consider will involve certain real parameters, and can be made periodic if those parameters are replaced with rational numbers which ``obey the same constraints'' as the original real parameters.\footnote{A similar rationalization procedure  also arises in measurable tilings of the torus: see \cite[Section 5.4]{ggrt}.}

The answer to this question will, of course, depend on the predicate; for instance, the nonlinear predicate $x^2=2$ famously admits real solutions but no rational solutions, and is thus not rationalizable. In the case where the predicate $P$ consists of linear constraints over the rationals, an affirmative answer can easily be obtained via retraction homomorphisms:

\begin{proposition}[Linear equations are rationalizable]  Suppose one has a system
\begin{equation}\label{system}
 a_{1,j} x_1 + \dots + a_{d,j} x_d = b_j
 \end{equation}
of linear equations in $d$ real unknowns $x_1,\dots,x_d$ where $j$ ranges over a (possibly infinite) index set, and the $a_{i,j}, b_j$ are all rational numbers.  If there exists a real solution $x_1,\dots,x_d \in \R$ to \eqref{system}, then there also exists a 
rational solution $x^\Q_1,\dots,x^\Q_d \in \Q$.
\end{proposition}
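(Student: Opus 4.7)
The plan is to apply, coordinatewise, a retraction homomorphism $\psi \colon \R \to \Q$ of the type already introduced in the notation section of the paper (whose existence is guaranteed by Zorn's lemma applied to $\Q$-linear functionals; equivalently, one fixes a $\Q$-basis of $\R$ containing $1$ and sends every other basis element to $0$). Given a real solution $(x_1,\dots,x_d)$ to the system, I would simply set
\[ x_i^\Q \coloneqq \psi(x_i) \in \Q \qquad (i = 1,\dots,d) \]
and then verify that this new tuple continues to solve every equation in \eqref{system}.

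To carry out the verification, apply $\psi$ to the $j$-th equation. Because $\psi$ is $\Q$-linear and the coefficients $a_{i,j}$ are rational, the left-hand side transforms as
\[ \psi(a_{1,j} x_1 + \dots + a_{d,j} x_d) = a_{1,j} \psi(x_1) + \dots + a_{d,j} \psi(x_d) = a_{1,j} x_1^\Q + \dots + a_{d,j} x_d^\Q. \]
Because $b_j \in \Q$ and $\psi$ restricts to the identity on $\Q$, the right-hand side transforms as $\psi(b_j) = b_j$. Combining the two gives $a_{1,j} x_1^\Q + \dots + a_{d,j} x_d^\Q = b_j$, so $(x_1^\Q,\dots,x_d^\Q)$ is the desired rational solution.

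There is no real obstacle in this argument, and the possible infiniteness of the index set is irrelevant since $\psi$ is applied to each equation separately. All of the nontrivial content is packaged into the existence of the retraction $\psi$, which is already supplied in the paper. It is worth emphasizing, looking ahead to Theorem \ref{main-3}, that this proof breaks down if the equations \eqref{system} are replaced by linear \emph{inequalities} with rational coefficients, since $\psi$ is not order-preserving (indeed, $\psi$ is unbounded on bounded sets); this is precisely the reason the authors flag that a genuinely new rationalization lemma will be needed to handle the inequality constraints arising in the indicator-function case.
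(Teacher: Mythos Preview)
Your proof is correct and is essentially identical to the paper's own proof: select a retraction homomorphism $\psi \colon \R \to \Q$, set $x_i^\Q \coloneqq \psi(x_i)$, and apply $\psi$ to each equation using $\Q$-linearity and the fact that $\psi$ fixes $\Q$. Your closing remark about the failure of this method for inequalities also matches the paper's own commentary.
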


\begin{proof}  Select a retraction homomorphism $\psi \colon \R \to \Q$.  Applying $\psi$ to \eqref{system} we obtain
$$  a_{1,j} \psi(x_1) + \dots + a_{d,j} \psi(x_d) = b_j
$$
and the claim follows by setting $x_i^\Q \coloneqq \psi(x_i)$.
\end{proof}

In our proof of Theorem \ref{main-3}, we will need to deal with systems of linear \emph{inequalities} rather than equalities.  As retraction homomorphisms cannot be order-preserving, the preceding approach no longer applies to this setting. Nevertheless, one still has rationalizability as long as only finitely many inequalities are involved.  We first introduce some notation:

\begin{definition}[Language of linear inequalities over $\Q$]  We let ${\mathcal L}$ denote the (zeroth-order) language of linear inequalities with rational coefficients.  That is to say, given some finite number of indeterminates $x_1,\dots,x_d$ (which in practice will be either real-valued or rational-valued), a formula in ${\mathcal L}$ is a finite boolean combination of primitive formulae of the form
\begin{equation}\label{axb}
a_1 x_1 + \dots + a_d x_d \ ? \ b,
\end{equation}
where $? \in \{=, \neq, <, >, \leq, \geq\}$ and $a_1,\dots,a_d, b$ are rational numbers.  By abuse of notation, we identify a formula $P$ in ${\mathcal L}$ with the predicate $P(x_1,\dots,x_d)$ it induces on $d$ real (or rational) variables $x_1,\dots,x_d$.

A predicate $P(x_1,\dots,x_d)$ is said to be \emph{locally definable in ${\mathcal L}$} if, for every compact subset $K$ of $\R^n$, there exists a formula $S_K$ in ${\mathcal L}$ such that the solution set of $P$ agrees with that of $S_K$ on $K$, i.e.,
$$ \{ (x_1,\dots,x_d) \in K: P(x_1,\dots,x_d) \} = \{ (x_1,\dots,x_d) \in K: S_K(x_1,\dots,x_d) \}.$$
\end{definition}

\begin{examples}\   
\begin{itemize}
 \item[(i)] The predicate
$$P(x_1,x_2)\colon x_1 - \frac{1}{2} x_2 \in \{0,1\}$$
is described by a formula in ${\mathcal L}$, namely
$$ \left(x_1 - \frac{1}{2} x_2 = 0\right) \vee \left(x_1 - \frac{1}{2} x_2 = 1\right).$$
  \item[(ii)] The predicate
$$ P(x_1,x_2)\colon (x_1,x_2) \in [0,1)^2$$
is described by a formula in ${\mathcal L}$, namely
$$ (x_1 \geq 0) \wedge (x_1 < 1) \wedge (x_2 \geq 0) \wedge (x_2 < 1).$$
\item[(iii)] The predicate
$$P(x)\colon x^2 = 2$$
\emph{cannot} be described by as a formula in ${\mathcal L}$; this follows from Proposition \ref{lin-ineq} below, as well as the irrationality of $\pm \sqrt{2}$.
\item[(iv)]  The predicate
$$P(x)\colon \{x\} < 0.5$$
is not globally definable in ${\mathcal L}$ (it requires an infinite number of linear inequalities to describe completely).  However, it is locally definable in ${\mathcal L}$.  For instance, on the compact set $[0,2]$, the solution set of the  predicate $P(x)$ agrees with the solution set of the sentence
$$S_{[0,2]}\colon ((x \geq 0) \wedge (x < 0.5)) \vee ((x \geq 1) \wedge (x < 1.5)).$$
\end{itemize}
\end{examples}

\begin{proposition}[Linear inequalities are rationalizable]\label{lin-ineq}  Let $P(x_1,\dots,x_d)$ be a predicate that is locally definable in ${\mathcal L}$.  If there exists a real solution $x^\R_1,\dots,x^\R_d \in \R$ to this predicate, then there also exists a rational solution $x^\Q_1,\dots,x^\Q_d \in \Q$.
\end{proposition}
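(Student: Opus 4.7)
The plan is to combine the local definability hypothesis with the fact that rational points are dense in rational affine subspaces. Given $x^\R = (x_1^\R, \dots, x_d^\R)$, I would first choose a compact neighborhood $K$ of $x^\R$ with $x^\R \in \interior(K)$ (e.g.\ $K = \prod_i [x_i^\R - 1, x_i^\R + 1]$), and invoke local definability to obtain a formula $S_K \in \mathcal{L}$ whose solution set agrees with that of $P$ on $K$. It then suffices to exhibit a rational point of $\interior(K)$ that satisfies $S_K$.

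Next, I would expand $S_K$ as a boolean combination of finitely many primitive formulae of the form $L_j(x) \ ?_j \ b_j$, $j=1,\dots,n$, with $L_j, b_j$ rational. The truth value of each such primitive at a point $x$ depends only on $\sign(L_j(x) - b_j) \in \{-,0,+\}$. Consequently, any $x$ for which $\sign(L_j(x) - b_j) = \sign(L_j(x^\R) - b_j)$ holds for every $j$ induces the same truth assignment on the primitives as $x^\R$, and therefore satisfies $S_K$. So it suffices to produce a rational $x^\Q \in \interior(K)$ matching the signs of $x^\R$ on each affine form $L_j - b_j$.

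To this end, let $E \coloneqq \{j : L_j(x^\R) = b_j\}$ be the ``tight'' indices and set $V \coloneqq \{x \in \R^d : L_j(x) = b_j \text{ for all } j \in E\}$, a nonempty rational affine subspace containing $x^\R$. The remaining conditions $\sign(L_j(x) - b_j) = s_j \in \{-,+\}$ for $j \notin E$ are strict inequalities, defining a relatively open subset $U$ of $V$ that contains $x^\R$. Using that the rank of a rational matrix is the same over $\Q$ and $\R$, the linear system defining $V$ has a rational solution $v_0$, and the kernel $V - v_0$ admits a $\Q$-basis spanning it over $\R$; hence $V \cap \Q^d$ is dense in $V$. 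Consequently $U \cap \interior(K) \cap \Q^d$ is nonempty, and any element furnishes the required rational solution $x^\Q$.

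The main substantive step is the density of $V \cap \Q^d$ in $V$; everything else is bookkeeping on truth assignments. An alternative presentation would begin with $\psi(x^\R)$ for a retraction homomorphism $\psi \colon \R \to \Q$ (which automatically lies in $V$ since $V$ is cut out by rational linear equations) and then perturb by small rational elements of $V - v_0$ to enforce the strict open conditions; but the density formulation seems cleaner and avoids introducing auxiliary wildly off-scale points.
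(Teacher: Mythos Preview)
Your proof is correct and follows essentially the same approach as the paper: localize to a compact neighborhood, reduce to a finite boolean combination of rational linear predicates, identify the ``cell'' containing $x^\R$ as a relatively open set in a rational affine subspace, and invoke the density of rational points in such subspaces. The only difference is in presentation: the paper argues abstractly that the class of finite unions of relatively open rational polytopes is closed under boolean operations, and then picks the polytope containing $x^\R$, whereas you construct that polytope directly via the sign pattern $(\sign(L_j(x^\R)-b_j))_j$. Your route is slightly more elementary and avoids the bookkeeping of verifying closure under complements; the paper's route makes the polytope structure of the full solution set more explicit, which is not needed here but foreshadows the refinement in Proposition~\ref{nondeg-real}.
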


Heuristically: finite linear programming over the rationals is incapable of producing only irrational solutions. The requirement of only finitely many inequalities is necessary, since, e.g., one can describe a given irrational number such as $\sqrt{2}$ as the unique solution to a system of an infinite number of inequalities $b_n^- < x < b_n^+$, where $b_n^+, b_n^-$ are sequences of rational numbers converging to $\sqrt{2}$ from above and below respectively.

\begin{proof}[Proof of Proposition \ref{lin-ineq}]  Let $U$ be a precompact open neighborhood of the point $(x^\R_1,\dots,x^\R_d)$ in $\R^d$.  The solution set $\{ (x_1,\dots,x_d) \in \R^n: P(x_1,\dots,x_d)\}$ to the given predicate agrees on $U$ with a boolean combination of finitely many hyperplanes and half-spaces defined over the rationals.  
Let ${\mathcal F}$ denote the class of finite unions of open convex polytopes in various affine subspaces  of $\R^d$ defined over the rationals.  This space contains all hyperplanes and half-spaces defined over the rationals, and is closed under boolean operations (with the key point being that the complement of an open convex polytope is a finite union of open convex polytopes); hence the solution set of $P$ agrees on $U$ with an element of ${\mathcal F}$. Since this solution set contains $(x^\R_1,\dots,x^\R_d)$, this set must therefore contain at least one non-empty open convex polytope $Q$ in some $d'$-dimensional affine subspace $V$ of $\R^d$ defined over the rationals, for some $0 \leq d' \leq d$.  One can view $V$ as the image $V = T(\R^{d'})$ of a standard Euclidean space $\R^{d'}$ by some linear injective map $T \colon \R^{d'} \to \R^d$ with rational coefficients, then $Q = T(Q')$  for some non-empty open convex polytope $Q'$ in $\R^{d'}$.  Since $\Q^{d'}$ is dense in $\R^{d'}$ and $U$ is an open neighborhood of $(x_1,\dots,x_d)$, we may find a rational point $q$ in $Q'$ with $T(q)$ lying in $U$, and then $T(q)$ will be a rational solution to $P$, as desired.
\end{proof}

\begin{remark}[Quantifier elimination]\label{lin-program} Thanks to the algorithms of linear programming (or the fact that the linear projection of a polytope is again a polytope), the language of linear inequalities over the rationals admits quantifier elimination over the reals.  Thus, every first-order predicate in ${\mathcal L}$ over the reals is in fact equivalent to a zeroth-order predicate. For instance, the predicate
$$ \exists y: (x+y < 1) \wedge (y-x < 1) \wedge (y > 0)$$
is equivalent to the predicate
$$ (x > -1) \wedge (x < 1).$$
As such, one can extend the language ${\mathcal L}$ in Proposition \ref{lin-ineq} from zeroth order sentences to first order sentences.
\end{remark}

Let $d \geq 0$.  For any field $k$ containing the rationals, we let $\Hom_1(\Q^{d+1},k)$ denote the set of all $\Q$-linear maps $\Phi \colon \Q^{d+1} \to k$ with $\Phi(e_0)=1$, where $e_0,\dots,e_d$ is the standard basis for $\Q^{d+1}$; this space can be identified with $k^d$ by identifying $\Phi$ with $(\Phi(e_1),\dots,\Phi(e_d))$.  In particular, $\Hom_1(\Q^{d+1},\Q)$ is dense in $\Hom_1(\Q^{d+1},\R)$.  With these identifications, a linear equality or inequality of the form \eqref{axb} can then be written more compactly as
\begin{equation}\label{axb-pred} \Phi(a) \ ? \ b
\end{equation}
where $\Phi \in \Hom_1(\Q^{d+1},\R)$ is the unknown, and $a \in \Q^{d+1}$ is the vector\footnote{One could also normalize $b$ by zero by replacing the first component of $a$ with $-b$, if desired.} $a \coloneqq (0,a_1,\dots,a_{d})$.   Proposition \ref{lin-ineq} can then be restated as follows: if a predicate $P(\Phi)$ that is locally definable in ${\mathcal L}$ has a solution $\Phi$ in $\Hom_1(\Q^{d+1},\R)$, then it also has a solution $\Phi^\Q$ in $\Hom_1(\Q^{d+1},\Q)$.

For technical reasons (relating to the desire to avoid the singularities of the fractional part operation $\{\}$) we will need a more complicated version of this assertion in which the initial solution $\Phi$ is assumed to be injective, and as a consequence some additional non-degeneracy properties can be placed on the rationalized solution $\Phi^\Q$.

\begin{proposition}[Linear inequalities with a non-degeneracy condition are realizable]\label{nondeg-real}  Let $d \geq 1$ and $J \geq 1$ be natural  numbers, and for $j=1,\dots,J$, let  $\overline{\alpha}_j, \overline{\beta}_j \in \Q^{d+1}$ be such that $\overline{\alpha}_j, \overline{\beta}_j, e_0$ are linearly independent over $\Q$.  Let $P(\Phi)$ be a predicate that is locally definable in ${\mathcal L}$ (identifying $\Hom_1(\Q^{d+1},\R)$ with $\R^d$).  If $P$ has an injective solution $\Phi \in \Hom_1(\Q^{d+1},\R)$, then $P$ also has a rational solution $\Phi^\Q \in \Hom_1(\Q^{d+1},\Q)$ with the additional ``non-degeneracy'' property that for all $1 \leq j \leq J$, $\Phi^\Q(\overline{\beta_j})$ is not an integer linear combination of 
$\Phi^\Q(\overline{\alpha_j})$ and $1$.
\end{proposition}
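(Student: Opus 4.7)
The plan is to combine the polytope extraction argument from Proposition \ref{lin-ineq} with a $p$-adic perturbation argument: we first locate $\Phi^\R$ inside a $d$-dimensional open rational convex polytope $Q$ contained in the solution set of $P$, select any rational point $\Phi^\Q_* \in Q$, and then add a tiny rational perturbation $\sum_j \Psi_j$ designed so that, for each $j$, a chosen distinct prime $p_{(j)}$ appears in the denominator of $\Phi^\Q(\overline{\beta}_j)$ while leaving the denominator of $\Phi^\Q(\overline{\alpha}_j)$ $p_{(j)}$-integral.

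First, running the extraction argument of Proposition \ref{lin-ineq} on a precompact open neighborhood of $\Phi^\R$, the solution set of $P$ is locally a finite union of relatively open rational convex polytopes in various rational affine subspaces, and $\Phi^\R$ lies in the relative interior of one such polytope $Q$ sitting inside a rational affine subspace $V \subseteq \R^d$. Injectivity of $\Phi^\R$ immediately forces $V = \R^d$, for otherwise a rational linear equation cutting out $V$ would yield a nontrivial $\Q$-linear relation among $1, \Phi^\R(e_1), \dots, \Phi^\R(e_d)$. Hence $Q$ is genuinely open in $\R^d$, and by density one picks any rational $\Phi^\Q_* \in Q$. Now observe that for rational $\Phi^\Q$ with $\Phi^\Q(\overline{\alpha}_j) = a/b$ in lowest terms, $\Z\Phi^\Q(\overline{\alpha}_j) + \Z$ equals $\tfrac{1}{b}\Z$; hence the non-degeneracy condition at index $j$ is equivalent to the denominator of $\Phi^\Q(\overline{\beta}_j)$ not dividing $b$, and in turn to the existence of a prime $p$ with $v_p(\Phi^\Q(\overline{\beta}_j)) < v_p(\Phi^\Q(\overline{\alpha}_j))$, where $v_p$ denotes the $p$-adic valuation.

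Next, I would use the $\Q$-linear independence of $e_0, \overline{\alpha}_j, \overline{\beta}_j$ to construct, for each $j$, a rational $\Q$-linear map $\Psi_j \in \Hom(\Q^{d+1},\Q)$ with
\[
\Psi_j(e_0) = 0, \qquad \Psi_j(\overline{\alpha}_j) = 0, \qquad \Psi_j(\overline{\beta}_j) = 1/p_{(j)},
\]
where $p_{(1)},\dots, p_{(J)}$ are distinct primes to be chosen shortly. Extending $\{e_0,\overline{\alpha}_j,\overline{\beta}_j\}$ to a $\Q$-basis of $\Q^{d+1}$ and setting $\Psi_j$ to vanish on the extra basis vectors yields, for each standard basis vector $e_i$, the formula $\Psi_j(e_i) = c_{i,j}/p_{(j)}$ with $c_{i,j} \in \Q$ independent of $p_{(j)}$; in particular $\|\Psi_j\|_\infty = O(1/p_{(j)})$.

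Finally, I would pick the primes $p_{(1)},\dots,p_{(J)}$ to be distinct and larger than every denominator appearing in $\Phi^\Q_*$, in the data $\overline{\alpha}_j,\overline{\beta}_j$, and in the constants $c_{i,j}$, and also large enough that $\sum_j \|\Psi_j\|_\infty$ is less than the distance from $\Phi^\Q_*$ to $\partial Q$. Setting $\Phi^\Q := \Phi^\Q_* + \sum_j \Psi_j \in Q \cap \Hom_1(\Q^{d+1},\Q)$, the predicate $P(\Phi^\Q)$ holds. At the prime $p_{(j)}$, only the single term $\Psi_j(\overline{\beta}_j) = 1/p_{(j)}$ contributes a negative $p_{(j)}$-adic valuation, so $v_{p_{(j)}}(\Phi^\Q(\overline{\beta}_j)) = -1$, while $\Psi_j(\overline{\alpha}_j) = 0$ and every other contribution has non-negative $p_{(j)}$-adic valuation, giving $v_{p_{(j)}}(\Phi^\Q(\overline{\alpha}_j)) \geq 0$. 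Thus the denominator of $\Phi^\Q(\overline{\beta}_j)$ contains the prime $p_{(j)}$, absent from the denominator of $\Phi^\Q(\overline{\alpha}_j)$, establishing non-degeneracy. The main obstacle will be preventing the various perturbations $\Psi_j$ from interfering with one another at primes $p_{(j')}$ with $j' \neq j$; this is exactly what the choice of distinct and sufficiently large primes accomplishes, since then each cross term $\Psi_j(\overline{\alpha}_{j'})$ or $\Psi_j(\overline{\beta}_{j'})$ is automatically $p_{(j')}$-integral.
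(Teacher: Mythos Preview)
Your proof is correct and follows essentially the same $p$-adic strategy as the paper: extract a full-dimensional open rational polytope from the solution set (using injectivity of $\Phi$), then assign one prime $p_{(j)}$ to each constraint and arrange that $\Phi^\Q(\overline{\beta}_j)$ has negative $p_{(j)}$-adic valuation while $\Phi^\Q(\overline{\alpha}_j)$ stays $p_{(j)}$-integral. The implementation differs slightly: the paper works abstractly with open sets $\Omega_j \subset \Hom_1(\Q^{d+1},\Q_{p_j})$, combines them via the Chinese remainder theorem, and then uses density of the resulting arithmetic progression in $\R^d$ to hit $Q$; you instead fix a rational base point $\Phi^\Q_*\in Q$ and add explicit small perturbations $\Psi_j$ with $\Psi_j(e_0)=\Psi_j(\overline{\alpha}_j)=0$, $\Psi_j(\overline{\beta}_j)=1/p_{(j)}$. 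Both achieve the same end, and your version is arguably more concrete.

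One small slip worth correcting: the assertion that non-degeneracy at $j$ is equivalent to the existence of a prime $p$ with $v_p(\Phi^\Q(\overline{\beta}_j)) < v_p(\Phi^\Q(\overline{\alpha}_j))$ is false as stated (e.g.\ $\Phi^\Q(\overline{\alpha}_j)=4$, $\Phi^\Q(\overline{\beta}_j)=2$ satisfies this at $p=2$ yet is degenerate). The correct criterion, matching the paper, is $v_p(\Phi^\Q(\overline{\beta}_j)) < \min\bigl(0,\, v_p(\Phi^\Q(\overline{\alpha}_j))\bigr)$ for some $p$. This does not affect your argument, since what you actually verify at $p=p_{(j)}$ is $v_{p_{(j)}}(\Phi^\Q(\overline{\beta}_j))=-1<0\le v_{p_{(j)}}(\Phi^\Q(\overline{\alpha}_j))$, which is exactly the correct condition.
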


The injectivity of $\Phi$ is necessary for this proposition, since otherwise one can easily concoct examples of predicates $P$ that force (say) $\Phi^\Q(\overline{\beta_1})$ to vanish. 

\begin{proof}[Proof of Proposition \ref{nondeg-real}]  Again we let $U$ be a precompact open neighborhood of $\Phi$.  In $U$, the solution set $\{ \Phi \in \Hom_1(\Q^{d+1},\R): P(\Phi) \}$ is the union of finitely many open convex polytopes in various subspaces of $\Hom(\Q^{d+1},\R)$ defined over the rationals.  One of these polytopes must contain the injective solution $\Phi$.  This injective solution cannot be contained an any proper affine subspace of $\Hom(\Q^{d+1},\R)$ defined over the rationals, as this would imply a non-trivial relation of the form
$$ a_1 \Phi(e_1) + \dots + a_d \Phi(e_d) = a_0 = a_0 \Phi(e_0)$$
for some rationals $a_0,\dots,a_d$, not all zero; but this contradicts the injectivity of $\Phi$, since the relation can be rewritten as
$$ \Phi(a_1 e_1 + \dots + a_d e_d) = \Phi(a_0 e_0).$$
Thus, the open convex polytope $Q$ that contains $\Phi$ must be open in the entire space $\Hom_1(\Q^{d+1},\R)$, rather than in a proper subspace of $\Hom_1(\Q^{d+1},\R)$.

Any rational point $\Phi^\Q \in \Hom_1(\Q^{d+1},\Q)$ in the non-empty open set $Q \cap U$ would be a rational solution to the predicate $P$; the remaining difficulty is to ensure the non-degeneracy condition that $\Phi^\Q(\overline{\beta_j})$ is not an integer linear combination of $\Phi^\Q(\overline{\alpha_j})$ and $1$.  

Because we are now working over the rationals instead of the reals, we cannot appeal to a Baire category argument or a measure-theoretic argument here; we turn instead to $p$-adic (or ``adelic'') methods.  Given a prime $p$ and a non-zero rational number $x=a/b$, define the $p$-valuation $\nu_p(x) \in \Z$ to be the number of times $p$ divides $a$, minus the number of times $p$ divides $b$.  We then define the $p$-adic metric $d_p$ on $\Q$ by the formula $d_p( x, y ) \coloneqq p^{-\nu_p(x-y)}$ for $x \neq y$ (and $d_p(x,x)=0$).  It is easy to see that this is a metric; we then define the $p$-adic field $\Q_p$ to be the metric completion of $\Q$ under $d_p$, and extend $\nu_p$ to $\Q_p$ in the usual fashion (with the convention $\nu_p(0)=+\infty$).  We define the ring of $p$-adic integers $\Z_p$ to be the metric completion of $\Z$ under the same metric; we observe that the metric balls in $\Q_p$ are of the form $x + p^n \Z_p$ for $x \in \Q_p$ and $n \in \Z$.

Let $p_1,\dots,p_J$ be distinct primes.  The key point is that if
$$ \nu_{p_j}( \Phi^\Q(\overline{\beta_j}) ) < \min(0, \nu_{p_j}( \Phi^\Q(\overline{\alpha_j})) ) $$
for some $j=1,\dots,J$, then $\Phi^\Q(\overline{\beta_j})$ cannot be an integer linear combination of $\nu_{p_j}( \Phi^\Q(\overline{\alpha_j}) )$ and $1$ (it is divisible by fewer powers of $p_j$).  If we then let 
$$ \Omega_j \coloneqq \{ \tilde \Phi \in \Hom_1(\Q^{d+1}, \Q_{p_j}) \colon \nu_{p_j}( \tilde \Phi(\overline{\beta_j}) ) < \min( 0, \nu_{p_j}( \tilde \Phi(\overline{\alpha_j}) )) \},$$
then $\Omega_j$ is an open subset of $\Hom(\Q^{d+1}, \Q_{p_j})$, and our task then reduces to finding a point $\Phi^\Q \in \Hom(\Q^{d+1},\Q)$ that simultaneously lies in $Q \cap U$ and in all of the $\Omega_j$, $1\leq j\leq J$.

Fix $j=1,\dots,J$.  Because $\overline{\beta_j}, \overline{\alpha_j}, e_0$ are linearly independent over $\Q$, we see that the quantities $\tilde \Phi(\overline{\beta_j})$ and $\tilde \Phi(\overline{\alpha_j})$ can vary independently in $\Q_{p_j}$ as $\tilde \Phi$ ranges over $\Hom_1(\Q^{d+1}, \Q_{p_j})$.  In particular, $\Omega_j$ is non-empty.  As $\Omega_j$ is open in the $p$-adic topology, it contains a set of the form
$$ q_j + p_j^{n_j} \Z_{p_j}^{d'}$$
for some $q_j \in \Q^{d'}$ and some $n_j \geq 0$.  By the Chinese remainder theorem, we can thus find $q_0 \in \Q^{d'}$ and a modulus $P\geq 1$ (the product of the $p_j^{n_j}$) with the property that
$$ q_0 + P \frac{a}{b} \in \Omega_j$$
for all $j=1,\dots,J$, whenever $a \in \Z^{d'}$ and $b \geq 1$  is a natural number such that $b$ is coprime to $p_1 \dots p_J$ (and thus invertible in $\Z_{p_j}$ for every $j$).  In particular, the elements of $\Q^{d'}$ that lie in all of the $\Omega_j$ will contain a coset of $\frac{P}{b}\Z^{d'}$ whenever $b$ is coprime to $p_1 \dots p_J$.  By taking $b$ large enough, this coset will intersect the open set $Q \cap U$ in some point $\Phi^\Q \in \Hom(\Q^{d+1},\Q)$, as desired.
\end{proof}

\section{Higher level PTC in \texorpdfstring{$\Z^2$}{Z^2}, I: cleaning the solution}\label{cleaning-sec}

In this section we begin the proof of Theorem \ref{main-3}.  

\subsection{Strategy}\label{strategy-sec}
Let $f, g$ be as in the theorem.  We may assume that $f$ is not identically zero, as the claim is trivial in that case. By Theorem \ref{structure-thm-2}(i), there exists an indicator function structured solution $(\one_A, \tilde g, W, (\varphi_w)_{w \in W})$ to the equation $f*a=g$.  The finite set $W$ of linearly independent primitive elements will be fixed henceforth, but we will consider modifications of the other components $\one_A$, $\tilde g$, $\varphi_w$ of this structured solution.  Indeed, to prove the theorem, it will suffice to find another indicator function structured solution $(\one_{A_\p}, \tilde g_\p, W, (\varphi_{w,\p})_{w \in W})$ in which the set $A'$ is periodic.  Our strategy for doing this can be summarized as follows:

\begin{enumerate}
    \item ``Clean up'' the structured solution $(\one_A, \tilde g, W, (\varphi_w)_{w \in W})$ as much as possible, to put it in a ``normal form''. (Here we will use some ergodic theory methods to assist with the cleaning process, allowing us to avoid ``bad'' events, such as encountering a density zero set or a discontinuity of $\{\}$, that only occur on null sets in the ergodic formalism.)
    \item Describe this (normalized) solution in terms of an (injective) map $\Phi \in \Hom_1(\Q^{d+1},\R)$, together with some auxiliary ``combinatorial'' data, obeying (an infinite number of) additional conditions in the language ${\mathcal L}$ of linear inequalities with rational coefficients.
    \item Eliminate the role of the combinatorial data, and reduce the number of conditions from infinite to finite.
    \item Apply Proposition \ref{nondeg-real} to replace the real-valued map $\Phi \in \Hom_1(\Q^{d+1},\R)$ by a (non-degenerate) rational-valued map $\Phi^\Q \in \Hom_1(\Q^{d+1},\Q)$, obeying the same set of conditions.
    \item Use this to generate the periodic structured solution $(\one_{A_\p}, \tilde g_\p, W, (\varphi_{w,\p})_{w \in W})$.
\end{enumerate}

In this section, we perform the ``cleaning'' stage of this strategy, deferring the later portions of the strategy to the next section.

\subsection{Analyzing the structure of the functions $\varphi_w$} We will exploit a simple observation: we have the freedom to subtract an arbitrary periodic function in $\ell^\infty(\Z^2,\R)_{\p}$ from any of the $\varphi_w$, as long as we also subtract the same function from $\tilde g$ (to maintain the equation \eqref{a-rep-ind}), since such a periodic function will also be $qw$-periodic for sufficiently divisible $q$.  
As a first application of this observation, we localize the range of each $\varphi_w$ to the unit interval $[0,1]$. 

\begin{proposition}[Restricting the range to the unit interval]\label{unit-loc}   Let $f \in \ell^\infty(\Z^2,\Z)_{\c} \backslash \{0\}$, $A\subset \Z^2$ and $g \in \ell^\infty(\Z^2,\Z)_{\p}$ be such that $f*\one_A=g$.  Then
 there exists an indicator function structured solution $(\one_A, \tilde g, W, (\varphi_w)_{w \in W})$  in which all the functions $\varphi_w$, $w\in W$, take values in the unit interval $[0,1]$.
\end{proposition}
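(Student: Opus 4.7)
My plan is to start from an initial structured solution $(\one_A,\tilde g_0,W,(\varphi_{w,0})_{w\in W})$ given by Theorem~\ref{structure-thm-2}(i), and then exploit the freedom emphasized in the paragraph preceding the proposition: for each $w\in W$, we may replace $\varphi_{w,0}$ by $\varphi_{w,0}-h_w$ where $h_w\in\ell^\infty(\Z^2,\R)_{\p}$ is any fully periodic function, provided we simultaneously replace $\tilde g_0$ by $\tilde g_0-\sum_{w\in W} h_w$; since $h_w$ is automatically $qw$-periodic for sufficiently divisible $q$, the modified $\varphi_w$ is still $qw$-periodic, and the representation \eqref{a-rep-ind} is preserved. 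Thus the whole task reduces to exhibiting, for each $w$, a fully $\Z^2$-periodic $h_w$ such that $\varphi_{w,0}-h_w$ takes values in $[0,1]$.

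To construct $h_w$, I would use the concrete form of $\varphi_{w,0}$ produced by the proof of Theorem~\ref{structure-thm-2}(i), namely as a $\Q$-linear combination $\sum_v \frac{f(v+v_0)}{f(v_0)}\,\pi_{qv}\one_A(\cdot-v)$ ranging over $v\in V$ proportional to $w$, where each $\pi_{qv}\one_A$ takes values in $[0,1]$. Splitting each rational coefficient into its integer and fractional parts, the ``integer part'' contribution can be collected into an integer-valued $qw$-periodic function; I would then use the slicing lemma, Theorem~\ref{structure-thm-2}(ii), which tells us that $(\one_{x+\langle w\rangle}f)*\varphi_{w,0}$ is periodic for every coset $x+\langle w\rangle$, to argue that this integer-valued piece, after possibly passing a fully $\Z^2$-periodic residual into $\tilde g_0$, is in fact fully $\Z^2$-periodic, making it a legitimate candidate for $h_w$. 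The remaining ``fractional part'' contribution is then valued in $[0,1]$ (up to absorbing a further periodic constant from the redistribution).

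Concretely I would choose $h_w$ pointwise to be a periodic function approximating $\lfloor \varphi_{w,0}\rfloor$ from the appropriate side on a fundamental domain $\Omega$ for a sufficiently fine sublattice $\Gamma\subseteq\Z^2$ (divisible both by the full period of $\tilde g_0$ and by $qw$), and then extend periodically using the operator $\Pi_{\Omega,\Gamma}$ from Definition~\ref{period-def}. With $\Gamma$ chosen fine enough, the resulting $\varphi_w=\varphi_{w,0}-h_w$ will be $qw$-periodic with values in $[0,1]$ on $\Omega$, and then by $qw$-periodicity and the slicing-lemma control on how $\varphi_{w,0}$ behaves along the other cosets, the containment in $[0,1]$ extends to all of $\Z^2$.

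The step I expect to be the main obstacle is verifying that a single \emph{fully} $\Z^2$-periodic function $h_w$ suffices to localize $\varphi_{w,0}$ into $[0,1]$ \emph{globally}, rather than only on one fundamental domain. The issue is that $\varphi_{w,0}$ is only $qw$-periodic (one-dimensional periodicity), so a priori its oscillation along the transverse $w^\ast$-direction could exceed $1$, in which case no $\Z^2$-periodic $h_w$ could work. Overcoming this obstacle is exactly where one must go beyond the bare $qw$-periodicity of $\varphi_{w,0}$ and use the structure theorem's explicit description together with the slicing lemma; I expect the final argument to hinge on noting that the ``non-$[0,1]$'' oscillation of $\varphi_{w,0}$ along $w^\ast$ comes entirely from the integer-part contributions, which by the slicing lemma can be shown to be fully $\Z^2$-periodic and therefore absorbable into $h_w$.
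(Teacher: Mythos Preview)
Your proposal has a genuine gap, and the paper's argument is both simpler and conceptually different.

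The step you flag as the ``main obstacle'' is indeed the crux, and your proposed resolution does not work. The slicing lemma (Theorem~\ref{structure-thm-2}(ii)) tells you that $(\one_{x+\langle w\rangle}f)*\varphi_{w,0}$ is periodic; this is information about certain \emph{convolutions} of $\varphi_{w,0}$ with slices of $f$, not about $\lfloor\varphi_{w,0}\rfloor$ or about the ``integer-part contributions'' in your decomposition. There is no mechanism here by which the slicing lemma forces the integer part of $\varphi_{w,0}$ to be fully $\Z^2$-periodic. Likewise, your claim that the transverse oscillation of $\varphi_{w,0}$ along $w^*$ ``comes entirely from the integer-part contributions'' is unsupported: the fractional-part pieces of your decomposition are sums of several $[0,1]$-valued terms with arbitrary rational coefficients, so they can certainly oscillate by more than $1$ in the transverse direction. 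Your construction of $h_w$ via $\Pi_{\Omega,\Gamma}$ would then fail to keep $\varphi_{w,0}-h_w$ inside $[0,1]$ off the fundamental domain.

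The paper bypasses all of this with a one-line trick that uses neither the slicing lemma nor the explicit formula for $\varphi_{w,0}$. Rearranging \eqref{a-rep-ind} gives
\[
\varphi_{w_0} = -\one_A + \tilde g - \sum_{w\in W\setminus\{w_0\}}\varphi_w,
\]
and now one applies the averaging projection $\pi_{qw_0}$ from \eqref{piv-def} to both sides. The left side is unchanged because $\varphi_{w_0}$ is already $qw_0$-periodic. On the right side, $\pi_{qw_0}\one_A$ lands in $[0,1]$ by \eqref{contract}, while $\pi_{qw_0}\tilde g$ and each $\pi_{qw_0}\varphi_w$ (for $w\neq w_0$) are simultaneously $qw_0$- and $qw$-periodic, hence fully periodic. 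So $\varphi_{w_0}$ differs from the $[0,1]$-valued function $1-\pi_{qw_0}\one_A$ by a periodic function, which is exactly the $h_{w_0}$ you were seeking. The point you missed is that averaging the \emph{other} $\varphi_w$'s in the $w_0$-direction makes them periodic, and averaging $\one_A$ produces the $[0,1]$-valued replacement directly.
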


\begin{proof}
    We begin with the indicator function structured solution $$(\one_A, \tilde g, W, (\varphi_w)_{w \in W})$$ provided by Theorem \ref{structure-thm-2}.  By the preceding observation, it will suffice to show that for each $w_0 \in W$, the function $\varphi_{w_0}$ differs from a $[0,1]$-valued function by a periodic function.
    
Fix $w_0 \in W$.  From \eqref{a-rep-ind} we have
$$ \varphi_{w_0} = - \one_A + \tilde g - \sum_{w \in W \backslash \{w_0\}} \varphi_w,$$
and thus on applying the projection operator $\pi_{qw_0}$ defined in \eqref{piv-def} for some sufficiently divisible $q$
$$ \varphi_{w_0} = - \pi_{qw_0} \one_A + \tilde g - \sum_{w \in W \backslash \{w_0\}} \pi_{qw_0} \varphi_w.$$
The function $\pi_{qw_0} \varphi_w$ is both $qw_0$-periodic and $qw$-periodic for sufficiently divisible $q$, and is thus periodic.  Meanwhile, from \eqref{contract} we see that $\pi_{qw_0} \one_A$ takes values in $[0,1]$, thus $\varphi_{w_0}$ differs from the $[0,1]$-valued function $1 - \pi_{qw_0} \one_A$ by a periodic function, as desired.
\end{proof}

Next, we refine the structure of the $\varphi_w$ a little more, in that these functions can be taken to be either constant, $\{0,1\}$-valued or essentially determined by equidistributed polynomials on each coset $x_0 + q\Z^2$ of a lattice $q\Z^2$.   For minor notational reasons, it will be convenient to normalize the base point $x_0$ of such cosets to lie in the fundamental domain $[q]^2$ for $\Z^2/q\Z^2$.

\begin{proposition}[Further structure on $\varphi_w$]\label{furt-struct} Let $(\one_A, \tilde g, W, (\varphi_w)_{w \in W})$ be an indicator function structured solution to $f * a = g$ in which all the functions $\varphi_w$, $w\in W$, take values in the unit interval $[0,1]$.  Then for all sufficiently divisible $q$, one can create a partition $W \times [q]^2 = \Gamma_{q,\cc} \cup \Gamma_{q,\e}$ with the following properties:
\begin{itemize}
    \item[(a)] (Combinatorial case) If $(w_0, x_0) \in \Gamma_{q,\cc}$, then $\varphi_{w_0}$ is either constant on $x_0+q\Z^2$, $\{0,1\}$-valued on $x_0+q\Z^2$, or both.
    \item[(b)] (Equidistributed case) If $(w_0, x_0) \in \Gamma_{q,\e}$, then there exists a non-constant polynomial $P_{w_0,q,x_0} \colon \Z \to \R$ with irrational leading coefficient, such that
\begin{equation}\label{equid}
\varphi_{w_0}( x_0 + n qw_0 + m qw^*_0 ) = P_{w_0,q,x_0}(m) \mod 1
\end{equation}
    for all $n,m \in \Z$.
\end{itemize}
\end{proposition}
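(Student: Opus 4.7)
The plan is to combine the polynomial-modulo-one structure of the functions $\varphi_{w_0}$ (arising from the integrality of $\one_A$ and iterated differencing) with a refinement of the modulus $q$ that absorbs any rational-coefficient piece of the polynomial into the lattice. The end result is, on each sub-coset, a polynomial whose leading behavior is either trivial (combinatorial case) or irrational (equidistributed case).

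First I would derive the polynomial-modulo-one structure, following the argument used in Section~\ref{integer-ptc} for the rationalization $\psi \circ \varphi_{w_0}$ but applied directly to $\varphi_{w_0}$, since $\one_A$ is already integer-valued. Reducing \eqref{a-rep-ind} modulo $1$ eliminates $\one_A$ and yields
$$ \varphi_{w_0} \equiv \tilde g - \sum_{w \in W \setminus \{w_0\}} \varphi_w \pmod{1}. $$
Every term on the right is annihilated by some difference operator $\partial_h$ with $h \in \Z^2 \setminus \langle w_0 \rangle$: the periodic $\tilde g$ by $\partial_{qv}$ for any $v \notin \langle w_0 \rangle$, and each $\varphi_w$ with $w \neq w_0$ by $\partial_{qw}$ (noting $w \notin \langle w_0 \rangle$ since $w_0, w$ are linearly independent). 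Composing these operators gives $\partial_{h_1}\cdots\partial_{h_l}\varphi_{w_0} \equiv 0 \pmod{1}$ for some $h_1,\dots,h_l \in \Z^2 \setminus \langle w_0 \rangle$. Together with the $qw_0$-periodicity of $\varphi_{w_0}$, this forces, on each coset $x_0 + q\Z^2$ parameterized by $(n,m) \mapsto x_0 + nqw_0 + mqw_0^*$, the function $\varphi_{w_0}$ to be independent of $n$ and equal modulo $1$ to a polynomial $P_{x_0}(m)$ of degree at most $l-1$.

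Next I would classify each $P_{x_0}$ and refine $q$ accordingly. Let $i^*_{x_0}$ denote the largest index $i \geq 1$ for which the coefficient of $m^i$ in $P_{x_0}$ is irrational, or set $i^*_{x_0} = 0$ if no such index exists. Split $P_{x_0} = R_{x_0} + Q_{x_0}$, where $Q_{x_0}$ collects the terms of degree strictly greater than $i^*_{x_0}$; by construction $Q_{x_0}$ has rational coefficients, so $Q_{x_0} \pmod{1}$ is periodic in $m$ with some period $N_{x_0}$. Taking $q$ sufficiently divisible (in particular divisible by the LCM of the finitely many $N_{x_0}$ over $x_0 \in [q]^2$), each sub-coset of the refined lattice inherits a reparameterized polynomial equal to $R_{x_0}$ plus a constant. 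If $i^*_{x_0} \geq 1$, this polynomial is non-constant with irrational leading coefficient, placing $(w_0, x_0)$ in $\Gamma_{q,\e}$. If $i^*_{x_0} = 0$, then $\varphi_{w_0} \pmod{1}$ is constant on the sub-coset with some value $c$; since $\varphi_{w_0} \in [0,1]$, either $\{c\} \in (0,1)$ (forcing $\varphi_{w_0}$ to be constant equal to $\{c\}$) or $\{c\} = 0$ (allowing $\varphi_{w_0}$ to be $\{0,1\}$-valued), so $(w_0, x_0)$ lies in $\Gamma_{q,\cc}$.

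The main obstacle I expect is the apparent bootstrap in the second step: refining $q$ creates new cosets and potentially new polynomials, threatening to require further refinement. This is resolved by observing that refining $q$ to $q' = qN$ substitutes $m \mapsto Nm'$ into $P_{x_0}$, which preserves rationality/irrationality of every coefficient and merely rescales the leading coefficient by $N^{\deg P_{x_0}}$. Thus the finite data of periods $\{N_{x_0}\}$ computed at an initial $q$ already controls all further refinements, and a single pass of LCM suffices.
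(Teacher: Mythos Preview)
Your proposal is correct and follows essentially the same approach as the paper: reduce \eqref{a-rep-ind} modulo $1$, apply difference operators in directions outside $\langle w_0 \rangle$ to obtain a polynomial-mod-$1$ structure on each coset, then split according to whether the polynomial has an irrational non-constant coefficient, refining $q$ to absorb the rational top part. The paper handles the bootstrap you raise by simply observing at the outset that it suffices to verify the claim for a single sufficiently divisible $q$, since the structure passes to multiples by subdivision; your explicit remark that rescaling $m \mapsto Nm'$ preserves the rationality pattern of the coefficients is the same observation made more concrete.
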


Note from the irrationality of the leading coefficient\footnote{We focus here on the leading coefficient of $P_{w_0,q,x_0}$, rather than lower order coefficients, because this coefficient is stable under translations $m \mapsto m+h$.} of $P_{w_0,q,x_0}$ that the combinatorial and equidistributed cases are necessarily disjoint.  The polynomial $P_{w_0,q,x_0}$ is unique up to integer shifts in the coefficients of $P_{w_0,q,x_0}$.  We will later be able to lower the degree of these polynomials to be $1$, but for now we allow for the theoretical possibility that these polynomials are non-linear.

\begin{proof}[Proof of Proposition \ref{furt-struct}]
We first observe that it suffices to establish the claim for a single value of $q$, as this implies the same structure for larger multiples of $q$ (subdividing the cosets of $q\Z^2$ as needed).

As before, for any $w_0 \in W$ we have the relation
$$ \varphi_{w_0} = - \one_A + \tilde g - \sum_{w \in W \backslash \{w_0\}} \varphi_w$$
thanks to \eqref{a-rep-ind}.  Reducing modulo $1$ to eliminate $\one_A$, and applying the difference operators
$$  \partial_{h_1} \dots \partial_{h_l}, $$
for $h_1,\dots,h_l \in \Z^2 \backslash \langle w_0 \rangle$  as in \eqref{phh},  to eliminate the other terms on the right-hand side, we obtain an equation of the form
$$  \partial_{h_1} \dots \partial_{h_l} \varphi_{w_0} = 0 \mod 1 .$$
  As in Section \ref{integer-ptc}, for $q$ sufficiently divisible, this identity, together with the $qw_0$-periodicity of $\varphi_{w_0}$, implies the existence of a polynomial $P_{w_0, x_0} \colon \Z \to \R$ for each coset $x_0 + q\Z^2$ such that
$$ \varphi_{w_0}( x_0 + n qw_0 + m qw^*_0 ) = P_{w_0, x_0}(m) \mod 1$$
for all $n,m \in \Z$.

For those cosets $x_0+q\Z^2$ for which $P_{w_0, x_0}$ has at least one irrational non-constant coefficient, we are now in the equidistributed case (b) (after making $q$ more divisible as needed to make all coefficients higher than the highest irrational coefficient give constant terms) and are done in that case.  For the $x_0$ for which all the coefficients of $P_{w_0, x_0}$ (other than the constant term) are rational, the polynomial $P_{w_0, x_0}$ is periodic modulo $1$, and hence $\varphi_{w_0}$ is also.  Thus, by making $q$ more divisible if necessary, we can ensure that $\varphi_{w_0}$ is constant modulo $1$ (not just periodic) on each such coset.  Since $\varphi_{w_0}$ is also $[0,1]$-valued, this implies that either $\varphi_{w_0}$ is $\{0,1\}$-valued on $x_0+q\Z^2$, or else constant $c_{w_0, x_0}$ on $x_0+q\Z^2$.  In both cases we are in the combinatorial case (a).
\end{proof}

By a further modification of the structured solution, one can make the following technical improvements to the properties (a) and (b) in Proposition \ref{furt-struct}.

\begin{proposition}[Technical refinement]\label{furt-struct-2} Let $(\one_A, \tilde g, W, (\varphi_w)_{w \in W})$ be as in the Proposition \ref{furt-struct}.  Then one can find a further indicator function structured solution $(\one_{A'}, \tilde g, W, (\varphi'_w)_{w \in W})$ with the functions $\varphi'_w, w \in W$ taking values in the unit interval $[0,1]$, such that for all sufficiently divisible $q$, one can create a partition $W \times [q]^2 = \Gamma_{q,\cc} \cup \Gamma_{q,\e}$ with the following improved properties:
\begin{itemize}
    \item[(a')] (Combinatorial case) If $(w_0, x_0) \in \Gamma_{q,\cc}$, then either $\varphi'_{w_0}$ is constant on $x_0+q\Z^2$, or $\one_{A'} = 1 - \varphi'_{w_0}$ on $x_0+q\Z^2$ (so in particular $\varphi'_{w_0}$ is $\{0,1\}$-valued), or both.
    \item[(b')] (Equidistributed case) If $(w_0, x_0) \in \Gamma_{q,\e}$, then there exists a non-constant polynomial $P'_{w_0,q,x_0} \colon \Z \to \R$ with irrational leading coefficient, such that
\begin{equation}\label{equid'}
\varphi'_{w_0}( x_0 + n qw_0 + m qw^*_0 ) = P'_{w_0,q,x_0}(m) \mod 1
\end{equation}
    for all $n,m \in \Z$.  Furthermore, if $P'_{w_0,q,x_0}(m)$ is a linear polynomial $\alpha_{w_0,q,x_0} m + \beta_{w_0,q,x_0}$, then $\alpha_{w_0,q,x_0}, \beta_{w_0,q,x_0}, 1$ are linearly independent over $\Q$ (hence, by \eqref{equid'} and the $[0,1]$-valued nature of $\varphi'_{w_0}$, we can write in this case
\begin{equation}\label{equid''}
\varphi'_{w_0}( x_0 + n qw_0 + m qw^*_0 ) = \{ \alpha_{w_0,q,x_0} m + \beta_{w_0,q,x_0} \}.)
\end{equation}
\end{itemize}
\end{proposition}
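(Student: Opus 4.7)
The plan is to construct $(\one_{A'}, \tilde g, W, (\varphi'_w))$ from $(\one_A, \tilde g, W, (\varphi_w))$ by coset-by-coset adjustments, leaving $\tilde g$ unchanged; here $q$ is taken sufficiently divisible so that $\tilde g$ is constant on each coset of $q\Z^2$ and every $\varphi_w$ falls into the dichotomy of Proposition \ref{furt-struct}. The main tool will be the identity $\one_A = \tilde g - \sum_{w \in W} \varphi_w$ on each coset, combined with its reduction modulo $1$ (where $\one_A$ drops out) and Weyl-type equidistribution on the polynomials arising in the equidistributed case.

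For (a'), I would argue that on any combinatorial coset $x_0 + q\Z^2$, a pigeonhole count on the two-element codomain $\{\tilde g(x_0), \tilde g(x_0) - 1\}$ of $\sum_w \varphi_w$ forces at most one $w \in W$ to have $\varphi_w$ non-constant $\{0,1\}$-valued: two independent $\{0,1\}$-valued summands with independent coset-coordinates would take three distinct total values $\{0,1,2\}$, incompatible with the two-element codomain. When exactly one such distinguished $w_0$ exists, writing $\one_A = K - \varphi_{w_0}$ with $K \in \Z$ and insisting on $\{0,1\}$-valuedness of both sides with $\varphi_{w_0}$ non-constant forces $K = 1$, so $\one_A = 1 - \varphi_{w_0}$ automatically. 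Setting $\varphi'_w := \varphi_w$ and $\one_{A'} := \one_A$ thus immediately fulfills (a') with the natural partition declaring $(w_0, x_0) \in \Gamma_{q,\cc}$ exactly when the original classification from Proposition \ref{furt-struct} is combinatorial.

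For (b'), on each equidistributed coset $(w_0, x_0)$ with linear polynomial $P(m) = \alpha m + \beta$ violating $\Q$-linear independence of $\alpha, \beta, 1$, the plan is to shift the polynomial's constant term by a real $\gamma$, i.e., replace $\varphi_{w_0}$ on the coset with $\{\alpha m + \beta + \gamma\}$, compensating by an opposite shift on a constant combinatorial $\varphi_{w_1}$ on the same coset; such a companion $w_1 \in W$ is available, because the mod-$1$ constraint $\sum_w \varphi_w \equiv \tilde g(x_0) \pmod 1$ rules out equidistributed cosets when $|W| = 1$. The constant $\gamma$ is chosen outside the countable forbidden set $\Q\alpha + \Q - \beta$ (to secure $\Q$-linear independence) and within a nondegenerate interval compatible with the $[0,1]$-range of $\varphi_{w_1}$; both conditions can be met simultaneously for the finitely many cosets in $[q]^2$. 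The hard part of the argument will be reconciling the fractional-part wraparound: the modification $\varphi_{w_0} \mapsto \{\alpha m + \beta + \gamma\}$ differs from $\varphi_{w_0}$ not by the constant $\gamma$ but by $\gamma - \one_{\{\alpha m + \beta\} \geq 1 - \gamma}$, a $qw_0$-periodic function taking the two values $\gamma$ and $\gamma - 1$. Cancelling this $\{0,1\}$-valued jump while preserving $\one_{A'}$ as an indicator function and $f * \one_{A'} = g$ will require either a matching bit-flip of $\one_A$ on the coset, checked for compatibility with $f * \one_{A'} = g$ via the slicing lemma (Theorem \ref{structure-thm-2}(ii)), or a coordinated wraparound absorption distributed across $\varphi_{w_1}$ and $\one_A$.
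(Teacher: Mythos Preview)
Your approach diverges substantially from the paper's and contains genuine gaps in both parts.

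\textbf{On (a').} The step ``writing $\one_A = K - \varphi_{w_0}$ with $K \in \Z$'' treats $K = \tilde g - \sum_{w \neq w_0} \varphi_w$ as a constant on $x_0 + q\Z^2$, but this is not justified. Your pigeonhole argument at best rules out a second non-constant $\{0,1\}$-valued $\varphi_{w_1}$; it does not exclude equidistributed $\varphi_{w}$'s on the same coset, and those are real-valued and vary along the $qw_0$-line. (Indeed, the paper only proves that combinatorial non-periodic and equidistributed cases are mutually exclusive \emph{after} establishing (a'), in Corollary~\ref{moment-consequences}(i), so you cannot invoke that here.) In fact the paper's telescoping argument shows only a \emph{density} statement: whenever $\varphi_{w_0}(v_0)=0$ and $\varphi_{w_0}(v_1)=1$, one has $\frac{1}{N}\sum_{n=1}^N \one_{A}(v_0+nq_0w_0) \to 1$ and the analogue for $v_1$. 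This does not force $\one_A(v_0)=1$ pointwise, so your claim that $A'=A$ already satisfies (a') is unjustified.

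\textbf{On (b').} Your compensation scheme has two problems. First, the ``constant combinatorial companion $\varphi_{w_1}$'' need not exist: the paper later shows that on an equidistributed coset one has exactly three equidistributed $\varphi_w$'s, so if $|W|=3$ there is no constant $\varphi_{w_1}$ to absorb a shift. Second, and more seriously, the wraparound correction $\one_{\{\alpha m + \beta\} \geq 1-\gamma}$ is a non-periodic $\{0,1\}$-valued function of the $w_0^*$-coordinate; absorbing it into $\one_A$ means altering $A$ on a set with no periodicity in the $w_0$-direction, and there is no mechanism (the slicing lemma included) that guarantees $f*\one_{A'}=g$ survives such a modification.

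\textbf{What the paper does instead.} The paper abandons direct modification entirely and uses the probabilistic method: it places a $q_0\Z^2$-invariant probability measure (via Krylov--Bogolyubov) on the compact space $X_{q_0,W,\tilde g}$ of all structured solutions with the given $\tilde g$ and periodicities, and shows that (a') and (b') hold \emph{almost surely} for a random solution $(\one_{A'}, \tilde g, W, (\varphi'_w))$. For (a'), the deterministic density statement above, combined with shift-invariance of the measure and dominated convergence, upgrades to $\mathbb{E}[\one_E \one_{A'}(v_0)]=\mathbb{P}(E)$, forcing $\one_{A'}(v_0)=1$ a.s.\ on the event $E$. For (b'), one observes that translation by $hq_0w_0^*$ shifts $\beta$ by $h\alpha$; since $\alpha$ is irrational these translates of any fixed bad event $\{a\alpha + b\beta + c = 0\}$ are pairwise disjoint, hence each is null. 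No modification of $\varphi_w$ or $A$ is needed at all---only a selection of a generic element from the orbit closure.
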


\begin{proof}
We observe that it suffices to establish the proposition for a single sufficiently divisible $q$, as the same claim holds for multiples of $q$ by subdivision.  (Note that because we require the leading coefficient of $P'_{w_0,q,x_0}$ to be irrational, a nonlinear polynomial cannot become linear upon restricting to a finer subcoset.) 

We will use the probabilistic method, by constructing a probability space of structured solutions, and then showing that the conclusions hold almost surely (i.e., with probability one).
Let $(\one_A, \tilde g, W, (\varphi_w)_{w \in W})$ be the indicator function structured solution from Proposition \ref{furt-struct}.  Then there exists an integer $q_0$ such that $g$ and $\tilde g$ are $q_0\Z^2$-periodic, and for each $w \in W$ the function $\varphi_w$ is $[0,1]$-valued and $q_0w$-periodic.  We now allow the set $A$ and the functions $\varphi_w$ to vary, although we keep $q_0$, $\tilde g$ and $W$ fixed.  More precisely, let $X_{q_0,W, \tilde g}$ be the space of indicator function structured solutions $(\one_{A'}, \tilde g, W, (\varphi'_w)_{w \in W})$ to $f * a = g$ where each $\varphi'_w$ is $[0,1]$-valued and $q_0w$-periodic.  Then $X_{q_0,W, \tilde g}$ is a non-empty compact metrizable space using the topology of pointwise convergence, and also has a continuous $q_0\Z^2$-shift action by defining
$$ T^h (\one_{A'}, \tilde g, W, (\varphi'_w)_{w \in W}) \coloneqq (\one_{\{h\}} * \one_{A'}, \tilde g, W, (\one_{\{h\}} * \varphi'_w)_{w \in W})$$
for $h \in q_0\Z^2$ and $(\one_A, \tilde g, W, (\varphi'_w)_{w \in W}) \in X_{q_0,W, \tilde g}$.  By the Krylov--Bogolyubov theorem \cite{KB}, we can find a $q_0\Z^2$-invariant Borel probability measure\footnote{One could even make $\mathbb{P}$ $q_0\Z^2$-ergodic if desired, although we will not need to do so here.} $\mathbb{P}$ on $X_{q_0,W, \tilde g}$.

We view $(\one_{A'}, \tilde g, W, (\varphi'_w)_{w \in W})$ as a random variable taking values in $X_{q_0,W, \tilde g}$ with probability distribution $\mathbb{P}$.
Our task is now to show that for this random structured solution, the properties (a) and (b) from the Proposition \ref{furt-struct} can be upgraded to (a') and (b') respectively.

We begin with  the task of upgrading (a). Let $q$ be sufficiently divisible (in particular, $q_0$ divides $q$), let $v_0,v_1$ lie in the same coset of $q\Z^2$, and let $w_0 \in W$.  We claim: 
\begin{itemize}
    \item[(i)] If $(w_0,x_0) \in \Gamma_{q_0,\cc}$, $\varphi'_{w_0}(v_0)=0$ and $\varphi'_{w_0}(v_1)=1$, then almost surely $\one_{A'}(v_0) = 1$ and $\one_{A'}(v_1) = 0$.
\end{itemize}  
To see this, let $E$ denote the event that $\varphi'_{w_0}(v_0)=0$ and $\varphi'_{w_0}(v_1)=1$; this event is $qw_0$-invariant because $\varphi'_{w_0}$ is $qw_0$-periodic.  If $E$ holds, then for any natural number $n$, we have from \eqref{a-rep-ind} that
\begin{equation}\label{eq1}
    \one_{A'}( v_0 + nq_0w_0 ) = \tilde g(v_0+nq_0w_0) - \sum_{w \in W \backslash \{w_0\}} \varphi'_{w}(v_0 + n q_0 w_0)
\end{equation} 
and
\begin{equation}\label{eq2}
    \one_{A'}( v_1 + nq_0w_0 ) = \tilde g(v_1+nq_0w_0) - \sum_{w \in W \backslash \{w_0\}} \varphi'_{w}(v_1 + n q_0 w_0) - 1.
\end{equation} 
By the $q_0\Z^2$-periodicity of $\tilde g$, we have $\tilde g(v_0+nq_0w_0) = \tilde g(v_1+nq_0w_0)$. We can thus subtract equation \eqref{eq2} from \eqref{eq1} and conclude that
$$ \one_{A'}( v_0 + nq_0w_0 ) - \one_{A'}( v_1 + nq_0w_0 ) = 1 + \sum_{w \in W \backslash \{w_0\}} (\varphi'_{w}(v_1 + n q_0 w_0) - \varphi'_{w}(v_0 + n q_0 w_0)).$$
Since $v_1-v_0 \in q\Z^2$ and for each $w\in W\backslash\{w_0\}$ the function $\varphi'_w$ is $q_0w$-periodic and $w$ is linearly independent of $w_0$, we can write
$$ \varphi'_{w}(v_1 + n q_0 w_0) = \varphi'_{w}(v_0 + (n+h_w) q_0 w_0)$$
for some integer $h_w$ (which depends on $w_0, w, q, q_0, v_0, v_1$).  We can therefore average in $n$ and use telescoping series to conclude that
$$ \lim_{N \to \infty} \frac{1}{N} \sum_{n=1}^N (\one_{A'}( v_0 + nq_0w_0 ) - \one_{A'}( v_1 + nq_0w_0 )) = 1.$$
Since $\one_{A'}$ is bounded between $0$ and $1$, this implies by the triangle inequality that
$$ \lim_{N \to \infty} \frac{1}{N} \sum_{n=1}^N \one_{A'}( v_0 + nq_0w_0 ) = 1$$
and
$$ \lim_{N \to \infty} \frac{1}{N} \sum_{n=1}^N \one_{A'}( v_1 + nq_0w_0 ) = 0$$
whenever $E$ holds.  By dominated convergence, we conclude that
$$ \lim_{N \to \infty} \mathbb{E} \one_E \frac{1}{N} \sum_{n=1}^N \one_{A'}( v_0 + nq_0w_0 ) = 1$$
and
$$ \lim_{N \to \infty} \mathbb{E} \one_E \frac{1}{N} \sum_{n=1}^N \one_{A'}( v_1 + nq_0w_0 ) = 0.$$
By the $q_0w_0$-invariance of $\mathbb{P}$, we can simplify this to
$$ \mathbb{E} \one_E \one_{A'}( v_0 ) = 1 \quad \text{and} \quad  \mathbb{E} \one_E \one_{A'}( v_1  ) = 0.$$
The claim (i) follows.
As a consequence, we now have
\begin{itemize}
    \item[(ii)] If $(w_0,x_0) \in \Gamma_{q_0,\cc}$, and $\varphi'_{w_0}$ is $\{0,1\}$-valued, but not identically constant on $x_0+q\Z^2$, then almost surely $\one_{A'} = 1 - \varphi'_{w_0}$ on $x_0+q\Z^2$.
\end{itemize}
Indeed, by hypothesis we can find $v_0, v_1 \in x_0 + q\Z^2$ with $\varphi'_{w_0}(v_0)=0$ and $\varphi'_{w_0}(v_1)=1$.  Then almost surely, we may apply the previous claim (i) to various $v \in x_0+q\Z^2$ (combined with either $v_0$ or $v_1$) to conclude that $\one_{A'}(v)=1$ whenever $\varphi'_{w_0}(v)=0$, and $\one_{A'}(v)=0$ whenever $\varphi'_{w_0}(v)=1$, giving (ii).  
Thus we can almost surely upgrade the combinatorial case (a) to (a').

It remains to almost surely upgrade the equidistributed case (b) to (b') for a given $(w_0,x_0) \in \Gamma_{q_0,\e}$. As the countable union of null events is null, it suffices to show that:
\begin{itemize}
    \item[(iii)] If $(w_0,x_0) \in \Gamma_{q_0,\e}$, then for any integers $a,b,c$, not all zero, the event $E_{w_0,q_0,a,b,c}$ that (b) holds with $$P'_{w_0,q_0,x_0}(m) = \alpha_{w_0,q_0,x_0} m + \beta_{w_0,q_0,x_0}$$ obeying a linear relation $a \alpha_{w_0,q_0,x_0} + b \beta_{w_0,q_0,x_0} + c = 0$, is a null event. 
\end{itemize}
For $b=0$, this already follows from the irrationality of $\alpha_{w_0,q_0,x_0}$, so we may assume $b \neq 0$.  Observe that if $E_{w_0,q_0,a,b,c}$ holds, and we translate the structured solution
$(\one_{A'}, \tilde g, W, (\varphi'_w)_{w \in W})$ by $hq_0w^*_0$ for some integer $h$, then we stay in the equidistributed case but with $\beta_{w_0,q_0,x_0}$ shifted by $h \alpha_{w_0,q_0,x_0}$ plus an integer.  Since $\alpha_{w_0,q_0,x_0}$ is irrational, this shows that the shifts $T^{hq_0w^*_0} E_{w_0,q_0,a,b,c}$ are all disjoint.  As these events occur with the same probability, they must therefore be null events, giving the claim.
\end{proof}

We now analyze the improved structured solutions provided by Proposition \ref{furt-struct-2} further.  We begin with a simple observation, that limits the occurrence of non-trivial combinatorial cases:

\begin{lemma}[At most one non-trivial combinatorial case per coset]\label{comb-case} Let $$(\one_{A'}, \tilde g, W, (\varphi'_w)_{w \in W})$$ be as in Proposition \ref{furt-struct-2}, and let $q$ be sufficiently divisible.  Then for each coset $x_0+q\Z^2$, there is at most one $w_0 \in W$ for which $\varphi'_{w_0}$ is $\{0,1\}$-valued, but not periodic, on $x_0+q\Z^2$.
\end{lemma}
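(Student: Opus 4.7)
The plan is to proceed by contradiction. Suppose there are two distinct elements $w_0, w_1 \in W$ such that both $\varphi'_{w_0}$ and $\varphi'_{w_1}$ are $\{0,1\}$-valued and not periodic when restricted to $x_0 + q\Z^2$. I would first extract from Proposition \ref{furt-struct-2}(a') the observation that a non-periodic function is in particular non-constant; hence we cannot be in the ``constant'' sub-case of (a'), so we must have the identities
$$ \one_{A'} = 1 - \varphi'_{w_0} \quad \text{and} \quad \one_{A'} = 1 - \varphi'_{w_1}$$
holding on $x_0 + q\Z^2$. Subtracting these gives the key equality
$$\varphi'_{w_0} = \varphi'_{w_1} \quad \text{on } x_0 + q\Z^2.$$

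The next step is to exploit the two different periodicity directions carried by the two functions. Globally, $\varphi'_{w_0}$ is $qw_0$-periodic and $\varphi'_{w_1}$ is $qw_1$-periodic (for $q$ sufficiently divisible). Restricting to the coset $x_0 + q\Z^2$, which is itself a translate of the lattice $q\Z^2$ spanned by $qw_0$ and $qw_0^*$, the common restriction $\varphi'_{w_0} = \varphi'_{w_1}$ therefore inherits both periodicities simultaneously. Since $w_0, w_1$ are linearly independent elements of $W$, the vectors $qw_0$ and $qw_1$ generate a finite-index subgroup of $q\Z^2$, so the restricted function is periodic with respect to this finite-index sublattice. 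This directly contradicts the hypothesis that $\varphi'_{w_0}$ is not periodic on $x_0 + q\Z^2$.

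There is no real obstacle here; the entire argument is algebraic and follows immediately from the structural form of case (a') in Proposition \ref{furt-struct-2} plus the basic fact that two transverse periodicity directions together imply full lattice periodicity in dimension two. The only minor care needed is to confirm that ``not periodic'' forces us out of the constant sub-case of (a'), which is what unlocks the identity $\varphi'_{w_i} = 1 - \one_{A'}$ for both $i = 0, 1$ and thereby produces the crucial equality $\varphi'_{w_0} = \varphi'_{w_1}$ on the coset.
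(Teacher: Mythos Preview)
Your proof is correct and follows essentially the same approach as the paper: assume two such $w_0, w_1$, use non-periodicity to rule out the constant sub-case of (a') and obtain $\varphi'_{w_0} = \varphi'_{w_1} = 1 - \one_{A'}$ on the coset, then combine the two transverse periodicity directions to force full periodicity, a contradiction. The paper's version is more terse but the logic is identical.
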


\begin{proof} If there were distinct $w_0, w_1 \in W$ for which $\varphi'_{w_0}, \varphi'_{w_1}$ were $\{0,1\}$-valued, but not periodic (hence not identically constant), on $x_0+q\Z^2$, then by Proposition \ref{furt-struct-2} we have $\varphi'_{w_0} = \varphi'_{w_1}$ on $x_0+q\Z^2$.  But as $\varphi'_{w_0}$ is $q_0w_0$-invariant and $\varphi'_{w_1}$ is $q_0w_1$-invariant, this implies that $\varphi'_{w_0}$ is periodic on $x_0+q\Z^2$, a contradiction.
\end{proof}

\subsection{Analyzing the equidistributed case via the moment method}

Now we turn to the analysis of the equidistributed case.  We make some crucial first and second moment calculations:

\begin{lemma}[First and second moments]\label{first-second} Let $(\one_{A'}, \tilde g, W, (\varphi'_w)_{w \in W})$ be as in Proposition \ref{furt-struct-2}, and let $q$ be sufficiently divisible.  If $(w_0,x_0) \in \Gamma_{q,\e}$, then for any $x'_0 \in x_0 + q\Z^2$ and natural number $m$ one has
$$ \lim_{N \to \infty} \frac{1}{N^2} \sum_{h \in \{1,\dots,N\}^2} \varphi'_{w_0}(x'_0 + mqh ) = \frac{1}{2}$$
and
$$ \lim_{N \to \infty} \frac{1}{N^2} \sum_{h \in \{1,\dots,N\}^2} \varphi'_{w_0}(x'_0 + mqh)^2 = \frac{1}{3}.$$
If furthermore $(w_1,x_0) \in \Gamma_{q,\e}$ with $w_1 \neq w_0$, then
$$ \lim_{N \to \infty} \frac{1}{N^2} \sum_{h \in \{1,\dots,N\}^2} \varphi'_{w_0}(x'_0 + mqh) \varphi'_{w_1}(x'_0 + mqh) = \frac{1}{4}.$$
\end{lemma}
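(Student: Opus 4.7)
The plan is to represent $\varphi'_{w_0}(x'_0 + mqh)$ as the fractional part of a polynomial in $h = (h_1, h_2)$ with an irrational top-degree coefficient, and then invoke Weyl's equidistribution theorem in two variables.

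First, I would parameterize $x'_0 = x_0 + n_0 q w_0 + m_0 q w_0^*$ and use \eqref{ynm} to expand $mqh$ in the $(qw_0, qw_0^*)$-basis. A direct computation yields
$$x'_0 + mqh = x_0 + (n_0 - m(w_0^* \wedge h))\,qw_0 + (m_0 + m(w_0 \wedge h))\,qw_0^*,$$
so Proposition \ref{furt-struct-2}(b') gives $\varphi'_{w_0}(x'_0 + mqh) = P_0(m_0 + m(w_0 \wedge h)) \mod 1$, where $P_0 \coloneqq P_{w_0,q,x_0}$ has irrational leading coefficient $\alpha_0$. Writing $w_0 = (a,b)$, this defines a two-variable polynomial $Q_0(h_1, h_2) \coloneqq P_0(m_0 + m(ah_2 - bh_1))$ of degree $d \coloneqq \deg P_0$. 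Its degree-$d$ homogeneous part is $\alpha_0 m^d(ah_2 - bh_1)^d$; binomial expansion produces irrational coefficients on at least one non-constant monomial (primitivity of $w_0$ ensures at least one of $a, b$ is nonzero). The two-variable Weyl equidistribution theorem then guarantees that $\{Q_0(h_1, h_2)\}$ is equidistributed on $[0,1]$ as $(h_1, h_2)$ ranges over $\{1,\dots,N\}^2$, immediately yielding the first two moments $\int_0^1 x\,dx = \tfrac{1}{2}$ and $\int_0^1 x^2\,dx = \tfrac{1}{3}$ (the integer values of $P_0$ form a set of density zero and are negligible).

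For the joint moment with $w_1 \in W \setminus \{w_0\}$, I would define the analogous $Q_1(h_1, h_2) \coloneqq P_{w_1,q,x_0}(m'_1 + m(w_1 \wedge h))$ with irrational leading coefficient $\alpha_1$ (where $m'_1$ arises from the $(qw_1, qw_1^*)$-parameterization of $x'_0$), and reduce via Weyl's criterion for joint equidistribution on the torus to showing that for every nonzero $(k_0,k_1) \in \Z^2$, the polynomial $k_0 Q_0 + k_1 Q_1$ has an irrational coefficient on a non-constant monomial. I expect verifying this last fact to be the main obstacle. The sub-case $\deg Q_0 \neq \deg Q_1$ or $k_0 k_1 = 0$ reduces to the first-moment analysis; the crux is $\deg Q_0 = \deg Q_1 = d$ with both $k_j$ nonzero. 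In that case, the $h_1^{d-i} h_2^i$-coefficient of the degree-$d$ homogeneous part of $k_0 Q_0 + k_1 Q_1$ equals
$$m^d \binom{d}{i}\bigl[k_0 \alpha_0 a^i(-b)^{d-i} + k_1 \alpha_1 (a')^i(-b')^{d-i}\bigr],$$
where $w_1 = (a', b')$. Were all $d+1$ such coefficients rational, then $(\alpha_0, \alpha_1)^T$ would lie in the preimage of $\Q^{d+1}$ under the rational matrix $M \in \Q^{(d+1) \times 2}$ whose columns are $k_0(a^i(-b)^{d-i})_i$ and $k_1((a')^i(-b')^{d-i})_i$. The linear independence of $w_0, w_1 \in W$ from Theorem \ref{structure-thm-2} implies $w_0 \wedge w_1 \neq 0$; viewing the columns of $M$ as the coordinate vectors of the $d$-th powers of the linearly independent linear forms $aX - bY$ and $a'X - b'Y$, one deduces that they are $\Q$-linearly independent. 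Hence $M$ has rank $2$ and admits a rational left inverse, which forces $(\alpha_0, \alpha_1) \in \Q^2$ and contradicts irrationality of $\alpha_0$. Joint equidistribution of $(\{Q_0\}, \{Q_1\})$ on $[0,1]^2$ is thereby established, and the third moment evaluates to $\iint_{[0,1]^2} xy\,dx\,dy = \tfrac{1}{4}$.
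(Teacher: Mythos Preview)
Your argument is correct and follows the same route as the paper: express $\varphi'_{w_j}$ on the subcoset as the fractional part of a polynomial in $h$ with irrational leading coefficient, then invoke Weyl equidistribution (singly and jointly). The paper's proof is terser---it simply asserts that joint equidistribution follows from the linear independence of $w_0,w_1$---whereas you supply the explicit verification of Weyl's criterion via the rank-$2$ matrix $M$. One minor imprecision: the columns of your $M$ are $k_j\bigl(a^i(-b)^{d-i}\bigr)_i$ \emph{without} the binomial factors $\binom{d}{i}$, so they are not literally the coordinate vectors of $(aX-bY)^d$ in the monomial basis; but since those binomials are nonzero rationals, rescaling each coordinate by them is a $\Q$-linear isomorphism, and the linear independence of $L_0^d,L_1^d$ for independent linear forms $L_0,L_1$ transfers to your columns as claimed.
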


Informally: the $\varphi'_{w_0}$ behave asymptotically like uncorrelated random variables of mean $1/2$ and variance $1/3 - (1/2)^2 = 1/12$.

\begin{proof}  Write $x'_0 = x_0 + qk$ for some $k \in \Z^2$.  From \eqref{equid'} and \eqref{ynm} we have
$$ \varphi'_{w_0}( x'_0 + mqh ) = P'_{w_0,q,x_0}(w_0 \wedge (mh+k)) \mod 1$$
for $h \in \Z^2$.  By the Weyl equidistribution theorem, the fractional parts of $P'_{w_0,q,x_0}(w_0 \wedge (mh+k))$ for $h\in \Z^2$ are equidistributed on the unit interval $[0,1]$.  Hence $\varphi'_{w_0}(x'_0 + mqh) \in [0,1]$ is equal to $\{P'_{w_0,q,x_0}(w_0 \wedge (mh+k))\}$ outside of a set of density zero, and the first two limits converge to $\int_0^1 t\ dt = \frac{1}{2}$ and $\int_0^1 t^2\ dt = \frac{1}{3}$ (resp.) as claimed.  For the third claim, we note that
$$ \varphi'_{w_1}( x'_0 + mqh ) = P'_{w_1,q,x_0}(w_1 \wedge (mh+k)) \mod 1.$$
Since $w_0,w_1$ are linearly independent, we conclude from a further appeal to the Weyl equidistirbution theorem that the fractional parts of $$P'_{w_0,q,x_0}(w_0 \wedge (x'_0 + mqh)),\; P'_{w_0,q,x_1}(w_1 \wedge (x'_0 + mqh)) \hbox{ for } h \in \Z^2$$
are \emph{jointly} equidistributed on the unit square $[0,1]^2$.  Hence the third limit converges to $\int_0^1 \int_0^1 t_0t_1\ dt_0 dt_1 = \frac{1}{4}$ as claimed.
\end{proof}

This gives several important consequences:

\begin{corollary}\label{moment-consequences} Let $(\one_{A'}, \tilde g, W, (\varphi'_w)_{w \in W})$ be as in Proposition \ref{furt-struct-2}, let $q$ be sufficiently divisible, and let $x_0 \in [q]^2$.  Let $W_{x_0,q,\e}$ be the set of $w_1 \in W$ such that $(w_1,x_0) \in \Gamma_{q,\e}$.
\begin{itemize}
    \item[(i)] If $(w_0,x_0) \in \Gamma_{q,\cc}$ is such that $\varphi'_{w_0}$ is non-periodic on $x_0+q\Z^2$, then $W_{x_0,q,\e}$ is empty.
    \item[(ii)] The size of $W_{x_0,q,\e}$ is either zero or three. In the latter case, the three functions $\varphi'_{w_1}$ sum up to $2-\one_A$ on $x_0+q\Z^2$.
    \item[(iii)] If $w_1 \in W_{x_0,q,\e}$, then the polynomial $P'_{w_1,q,x_0}$ is linear.
\end{itemize}
\end{corollary}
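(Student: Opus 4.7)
For part (i), I will use the hypothesis that $\varphi'_{w_0}$ is non-periodic on $x_0 + q\Z^2$ (with $(w_0, x_0) \in \Gamma_{q,\cc}$) together with Proposition \ref{furt-struct-2}(a') to force $\one_{A'} = 1 - \varphi'_{w_0}$ on that coset. Substituting into \eqref{a-rep-ind} and cancelling $\varphi'_{w_0}$ yields $\sum_{w \neq w_0} \varphi'_w = \tilde g - 1$ on $x_0 + q\Z^2$. Lemma \ref{comb-case} forces every other combinatorial $\varphi'_w$ ($w \neq w_0$) to be periodic on this coset, so the partial sum $S := \sum_{w \in W_{x_0,q,\e}} \varphi'_w$ is itself periodic on $x_0 + q\Z^2$. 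Choosing $x_0' \in x_0 + q\Z^2$ and $m$ divisible enough that $S$ is constant along $x_0' + mq\Z^2$, Lemma \ref{first-second} gives $S(x_0') = k/2$ and $S(x_0')^2 = k/3 + k(k-1)/4$ with $k := |W_{x_0,q,\e}|$; squaring the first identity and comparing with the second collapses to $k/12 = 0$, hence $k = 0$.

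For part (ii), by (i) I may assume no non-periodic combinatorial $\varphi'_{w_0}$ exists on $x_0 + q\Z^2$, so every $\varphi'_w$ with $w \notin W_{x_0,q,\e}$ is periodic there; this lets me write $S = P - \one_{A'}$ on the coset for some periodic $P$. Repeating the moment computation along $x_0' + mq\Z^2$ (on which $P \equiv p$ is constant), and introducing $\mu := \lim_{N \to \infty} N^{-2}\sum_{h} \one_{A'}(x_0' + mqh)$, the first-moment relation $p - \mu = k/2$ combined with $\one_{A'}^2 = \one_{A'}$ in the second-moment identity yields, after elimination of $p$, the equation $\mu(1 - \mu) = k/12$; since $\mu \in [0,1]$, this forces $k \leq 3$. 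To rule out $k \in \{1,2\}$, I restrict $h$ to the line $h = t w_i$ for some chosen $w_i \in W_{x_0,q,\e}$: this freezes $\varphi'_{w_i}$ to the constant $\{P'_{w_i}(0)\}$, while each remaining $\varphi'_{w_j}$ becomes the fractional part of a one-variable polynomial in $t$ with irrational leading coefficient, hence takes a dense set of values by Weyl equidistribution. Therefore $S$ attains infinitely many values along this line, contradicting the finiteness forced by $P$ being periodic in $t$ and $\one_{A'} \in \{0,1\}$. The remaining case $k = 3$ forces $\mu = 1/2$ and $p = 2$ uniformly, so $P \equiv 2$ on $x_0 + q\Z^2$ and $\sum_j \varphi'_{w_j} = 2 - \one_{A'}$.

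For part (iii), the relation from (ii) gives $\sum_{j=1}^{3} \{P'_{w_j}(w_j \wedge h)\} = 2 - \one_{A'}(x_0 + qh) \in \Z$ for all $h \in \Z^2$, from which (subtracting the integer parts) one deduces that $Q(h) := \sum_{j=1}^{3} P'_{w_j}(w_j \wedge h)$ is an integer-valued polynomial on $\Z^2$ of degree $D := \max_j d_j$. Suppose for contradiction $D \geq 2$. For every $e \in \Z^2$, the one-variable polynomial $t \mapsto Q(te)$ is integer-valued of degree $D$, so its leading coefficient $T(e) := \sum_{j:\, d_j = D} \alpha_j (w_j \wedge e)^D$ lies in $(1/D!)\Z$; consequently $T$ is a rational-valued polynomial on $\Z^2$, and each of its monomial coefficients is a $\Q$-linear combination of the $\alpha_j$'s via the columns of the Veronese-type matrix $M = \bigl((-b_j)^{D-i}\, a_j^{i}\bigr)_{0 \le i \le D,\; j :\, d_j = D}$, where $w_j = (a_j, b_j)$. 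The columns of $M$ are the Veronese images in $\R^{D+1}$ of pairwise distinct projective classes (since the $w_j$ are pairwise non-parallel primitive vectors), and for $D \geq 2$ any three (or fewer) such images lie in general linear position on the rational normal curve, hence are linearly independent over $\Q$. Inverting a full-rank submatrix of $M$ forces every $\alpha_j$ with $d_j = D$ to be rational, contradicting the irrationality of leading coefficients. Therefore $D = 1$, establishing that each $P'_{w_j, q, x_0}$ is linear.

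The main obstacle is ruling out $k = 1, 2$ in (ii): the moment identity alone only produces $k \leq 3$ with $\mu(1-\mu) = k/12$, and $\mu$ is a priori allowed to be irrational, so one must inject additional equidistribution information. Restricting to the line $h = t w_i$ is the natural device, but only works because that restriction strips out exactly the one fractional-part summand indexed by $w_i$ while leaving at least one equidistributed summand intact—one has to verify that the reduced polynomial (in $t$) still has irrational leading coefficient so Weyl applies on the 1-dimensional slice.
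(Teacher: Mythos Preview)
Your argument for (i) matches the paper's exactly. Parts (ii) and (iii) take genuinely different routes, and are essentially correct, with one small lacuna.

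\textbf{The gap.} In (ii), your line-restriction device rules out $k=2$ but not $k=1$ as written: when $k=1$ and you restrict to $h=tw_1$, you freeze the \emph{only} equidistributed summand, leaving nothing to equidistribute, so your claimed ``$S$ attains infinitely many values'' does not follow. The fix is immediate: for $k=1$, simply do not restrict (or restrict in a direction not parallel to $w_1$); then $\varphi'_{w_1}$ itself already takes a dense set of values on the subcoset by Weyl, while $\varphi'_{w_1}=p-\one_{A'}\in\{p,p-1\}$ forces only two values. Your closing paragraph hints that you see this issue (``leaving at least one equidistributed summand intact''), but the $k=1$ case should be stated separately.

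\textbf{Comparison with the paper.} The paper handles the exclusion of $k<3$ and the linearity in (iii) by a single uniform mechanism: from the mod-$1$ relation $\varphi'_{w_1}\equiv c-\sum_{w'\neq w_1}\varphi'_{w'}$, one applies one difference operator $\partial_h$ per surviving summand on the right (at most two, since $|W_{x_0,q,\e}\setminus\{w_1\}|\le 2$), obtaining $\partial_{h_1}\partial_{h_2}\varphi'_{w_1}\equiv 0\pmod 1$; this is incompatible with an irrational leading coefficient of degree $\ge 2$, so $P'_{w_1}$ is linear. The same idea with a single derivative kills $k\le 2$. This is shorter and avoids any linear algebra.

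Your approach for (iii) is more structural: you extract the integer-valued polynomial $Q(h)=\sum_j P'_{w_j}(w_j\wedge h)$, read off the rationality of the degree-$D$ part, and then use that the columns of the Veronese matrix (images of the pairwise non-parallel $w_j$ on the rational normal curve of degree $D$) are linearly independent for $D\ge 2$ to force the leading $\alpha_j$'s rational, a contradiction. This is a valid alternative, and has the conceptual merit of isolating \emph{why} three directions suffice to force linearity (three points on a conic are in general position). The paper's differentiation argument buys brevity; yours buys an explicit algebraic obstruction and would generalize more transparently if one ever needed to control higher-degree terms in other settings.
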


\begin{proof}  
We begin with (i).  By Proposition \ref{furt-struct-2}(a'), we have $\one_{A'} = 1 - \varphi'_{w_0}$ on $x_0+q\Z^2$.  By \eqref{a-rep-ind}, we conclude that
\begin{equation}\label{comb}
    1 - \tilde g = \sum_{w \in W \backslash \{w_0\}} \varphi'_w
\end{equation} 
on $x_0+q\Z^2$.  We already know that $\tilde g$ is $q_0\Z^2$-periodic and hence is constant on $x_0+q\Z^2$, while from Lemma \ref{comb-case} we know that for all the remaining combinatorial cases $(w,x_0) \in \Gamma_{q,\cc}$, $w \in W\backslash \{w_0\}$, the corresponding  functions $\varphi'_w$ are also periodic on $x_0+q\Z^2$.  Thus we can find a subcoset $x'_0 + mq\Z^2$ of $x_0+q\Z^2$ on which all these functions are constant. Hence, by \eqref{comb}, on such a coset the sum $\sum_{w_1 \in W_{x_0,q,\e}} \varphi'_w$ is equal to a constant $c_{x'_0+mq\Z^2}$.  Computing the first and second moments of this quantity on $x'_0+mq\Z^2$ using Lemma \ref{first-second}, we see that
$$ c_{x'_0+mq\Z^2} = \frac{|W_{x_0,q,\e}|}{2}$$
and
$$ c_{x'_0+mq\Z^2}^2 = \frac{|W_{x_0,q,\e}|}{3} + \frac{|W_{x_0,q,\e}|(|W_{x_0,q,\e}|-1)}{4} = \frac{|W_{x_0,q,\e}|^2}{4} + \frac{|W_{x_0,q,\e}|}{12}.$$
These equations are only consistent when $|W_{x_0,q,\e}|=c_{x'_0+mq\Z^2}=0$, giving (i).

For the remaining claims, we may assume by (i) that if $W_{x_0,q,\e}$ is non-empty then $\varphi'_{w_0}$ is periodic whenever $(w_0,x_0) \in \Gamma_{q,\cc}$.  From \eqref{a-rep-ind} we then conclude that, after passing to an subcoset $x'_0 + mq\Z^2$ for sufficiently divisible $m$, we have
\begin{equation}\label{onea}
\one_{A'} = c_{x'_0+mq\Z^2} - \sum_{w_1 \in W_{x_0,q,\e}} \varphi'_{w_1}
\end{equation}
on $x'_0 + mq\Z^2$, for some constant $c_{x'_0+mq\Z^2}$.  Computing first and second moments on $x'_0 + mq\Z^2$, by Lemma \ref{first-second} and \eqref{onea}, we conclude that
$$ \lim_{N \to \infty} \frac{1}{N^2} \sum_{h \in \{1,\dots,N\}^2} \one_{A'}(x'_0+mqh) = c_{x'_0+mq\Z^2} - \frac{|W_{x_0,q,\e}|}{2}$$
and
\begin{align*}
\lim_{N \to \infty} \frac{1}{N^2} \sum_{h \in \{1,\dots,N\}^2} \one_{A'}(x'_0+mqh)^2 &= c_{x'_0+mq\Z^2}^2 - c_{x'_0+mq\Z^2}|W_{x_0,q,\e}| \\
&\quad + \frac{|W_{x_0,q,\e}|}{3} + \frac{|W_{x_0,q,\e}|(|W_{x_0,q,\e}|-1)}{4}\\
&= \left(c_{x'_0+mq\Z^2} - \frac{|W_{x_0,q,\e}|}{2}\right)^2 + \frac{|W_{x_0,q,\e}|}{12}.
\end{align*}
Both left-hand sides are equal to the same quantity $\theta \in [0,1]$; thus $$\theta=\theta^2+\frac{|W_{x_0,q,\e}|}{12}.$$  So, since $\theta - \theta^2 \leq \frac{1}{4}$, we conclude that
$$
\frac{|W_{x_0,q,\e}|}{12} \leq \frac{1}{4},$$
giving $|W_{x_0,q,\e}| \leq 3$.  Furthermore, in order for equality to hold, we must have $\theta = \frac{1}{2}$, and hence $c_{x'_0+mq\Z^2} = \theta + \frac{|W_{x_0,q,\e}|}{2} = 2$, so that the three $\varphi'_{w_1}$ sum up to $2 - \one_{A'}$ on $x'_0+mq\Z^2$ thanks to \eqref{onea}, giving the second part of (ii) after letting $x'_0$ vary.

Now suppose that $w_1 \in W_{x_0,q,\e}$.  From \eqref{onea} we have, after passing to a subcoset $x'_0+mq\Z^2$ as before, that
$$ \varphi'_{w_1} = c_{x'_0+mq\Z^2} - \sum_{w'_1 \in W_{x_0,q,\e} \backslash \{w_1\}} \varphi'_{w'_1} \mod 1$$
on $x'_0+mq\Z^2$ and some constant $c_{x'_0+mq\Z^2}$.  Since $W_{x_0,q,\e} \backslash \{w_1\}$ has cardinality at most two, we can eliminate the right-hand side with at most two derivatives, and conclude that
$$ \partial_{h_1} \partial_{h_2} \varphi'_{w_1} = 0 \mod 1$$
on $x'_0+mq\Z^2$ for some $h_1,h_2 \in q\Z^2$ that are linearly independent of $w_1$.  This is incompatible with $P'_{w_1,q,x_0}$ having an irrational leading coefficient if $P'_{w_1,q,x_0}$ had degree at least two; since $P'_{w_1,q,x_0}$ is non-constant, it must therefore be linear, giving (iii).  Observe that if we had $|W_{x_0,q,\e}| < 3$, we could use just one derivative instead of two here, forcing $P'_{w_1,q,x_0}$ to be constant on $x'_0+mq\Z^2$, which, again, contradicts the definition of $\Gamma_{q,\e}$.  This completes the proof of (ii).
\end{proof}

After some final (and easy) ``cleaning'' steps, we arrive at a solution in ``normal form'':

\begin{proposition}[Structured solution in normal form]\label{Struct-sol} Let $f \in \ell^\infty(\Z^2,\Z)_{\c} \backslash \{0\}$, $A\subset \Z^2$ and $g \in \ell^\infty(\Z^2,\Z)_{\p}$ be such that $f*\one_A=g$.   Then there exists an indicator function structured solution
$(\one_{A'}, \tilde g', W, (\varphi'_w)_{w \in W})$ to the equation $f*a=g$ such that for all sufficiently divisible $q$, one has a partition $[q]^2 = \Sigma_{q,\cc} \cup \Sigma_{q,\e} \cup \Sigma_{q,(0)}$ with the following properties:
\begin{itemize}
    \item (Combinatorial case) If $x_0 \in \Sigma_{q,\cc}$, then there is a $w_{x_0,q} \in W$ such that on $x_0+q\Z^2$, $\varphi'_{w_{x_0,q}}$ is $\{0,1\}$-valued, all the other $\varphi'_w$ vanish, and $\tilde g' = 1$.
    \item (Equidistributed case) If $x_0 \in \Sigma_{q,\e}$, then on $x_0+q\Z^2$, $\tilde g'=2$, and there exists a three-element subset $W_{x_0, q,\e}$ of $W$, such that $\varphi'_w$ vanishes for all $w \in W \backslash W_{x_0,q,\e}$, and for each $w \in W_{x_0,q,\e}$ there exist real numbers $\alpha_{w, q, x_0}, \beta_{w, q, x_0}$ with $\alpha_{w, q, x_0}, \beta_{w, q, x_0}$, $1$ linearly independent over $\Q$, such that
    \begin{equation}\label{relation} \varphi'_{w}(x_0 + nqw_i + mqw^*_i) = \{\alpha_{w, q, x_0} m + \beta_{w, q, x_0}\}.
\end{equation}
    for all $n,m \in \Z$.  Equivalently, using \eqref{ynm}, one has
    \begin{equation}\label{rel-2}
    \varphi'_w(y) =  \left\{\alpha_{w, q, x_0} \left(w \wedge \frac{y-x_0}{q}\right) + \beta_{w, q, x_0}\right\}
\end{equation}
    for all $y \in x_0+q\Z^2$.
    \item (Zero case) If $x_0 \in \Sigma_{q,(0)}$ then on $x_0+q\Z^2$, the $\varphi'_w$ all vanish, and $\tilde g'$ is identically equal to either $0$ or $1$.
\end{itemize}
In particular, $\tilde g'$ is integer-valued (in fact, it takes values in $\{0,1,2\}$).
\end{proposition}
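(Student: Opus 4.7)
My plan is to begin with the structured solution $(\one_{A'}, \tilde g, W, (\varphi'_w)_{w \in W})$ produced by Proposition \ref{furt-struct-2} and perform a single bookkeeping ``absorption'' step, after which the trichotomy and the integer-valuedness of $\tilde g'$ will be forced by the preceding lemmas. The absorption step eliminates every constant piece of a $\varphi'_w$ by pushing it into $\tilde g'$, reducing the combinatorial case of Proposition \ref{furt-struct-2}(a') on each coset to just its non-trivial (non-constant $\{0,1\}$-valued) variant; from there, the classification of cosets into $\Sigma_{q,\cc}$, $\Sigma_{q,\e}$, $\Sigma_{q,(0)}$ will be read off directly from Lemma \ref{comb-case} and Corollary \ref{moment-consequences}.

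Concretely, for each pair $(w_0, x_0) \in \Gamma_{q,\cc}$ on which $\varphi'_{w_0}$ is identically equal to some constant $c$ throughout $x_0 + q\Z^2$, I will update
\[
\varphi'_{w_0} \leftarrow \varphi'_{w_0} - c\, \one_{x_0 + q\Z^2}, \qquad \tilde g' \leftarrow \tilde g' - c\, \one_{x_0 + q\Z^2}
\]
(with $\tilde g'$ initialized to $\tilde g$). The indicator $\one_{x_0 + q\Z^2}$ is $q\Z^2$-periodic, hence in particular $qw_0$-periodic, so $\varphi'_{w_0}$ remains $qw_0$-periodic, $\tilde g'$ remains in $\ell^\infty(\Z^2,\R)_\p$, and the representation \eqref{a-rep-ind} is preserved. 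After processing every coset and every $w_0 \in W$, on each coset $x_0 + q\Z^2$ each $\varphi'_w$ is either identically zero, non-constant $\{0,1\}$-valued satisfying $\one_{A'} = 1 - \varphi'_w$ (by Proposition \ref{furt-struct-2}(a')), or of equidistributed type as in Proposition \ref{furt-struct-2}(b').

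The trichotomy now falls out. Fix $x_0 \in [q]^2$. By Lemma \ref{comb-case}, at most one $w_0 \in W$ can be in the non-trivial combinatorial form on $x_0 + q\Z^2$. If such a $w_0$ exists, Corollary \ref{moment-consequences}(i) forces $W_{x_0,q,\e} = \emptyset$, so every other $\varphi'_w$ vanishes on the coset; substituting $\one_{A'} = 1 - \varphi'_{w_0}$ into \eqref{a-rep-ind} yields $\tilde g' = 1$ there, and I place $x_0 \in \Sigma_{q,\cc}$ with $w_{x_0,q} \coloneqq w_0$. If no such combinatorial $w_0$ exists but $W_{x_0,q,\e} \neq \emptyset$, Corollary \ref{moment-consequences}(ii)--(iii) give $|W_{x_0,q,\e}| = 3$ with $\sum_{w \in W_{x_0,q,\e}} \varphi'_w = 2 - \one_{A'}$ and each $P'_{w,q,x_0}$ linear; \eqref{a-rep-ind} then yields $\tilde g' = 2$ on the coset, while \eqref{equid''} with the linear independence clause of Proposition \ref{furt-struct-2}(b') delivers \eqref{relation}, placing $x_0 \in \Sigma_{q,\e}$. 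In the remaining case every $\varphi'_w$ vanishes on $x_0 + q\Z^2$, so $\tilde g' = \one_{A'} \in \{0,1\}$ there, placing $x_0 \in \Sigma_{q,(0)}$. The equivalent form \eqref{rel-2} follows from \eqref{ynm}, since for $y = x_0 + nqw + mqw^*$ one has $w \wedge \tfrac{y - x_0}{q} = m$.

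I do not expect a substantial obstacle: essentially all of the analytic work has already been carried out in Proposition \ref{furt-struct-2} and Corollary \ref{moment-consequences}, and the integer-valuedness of $\tilde g'$ is an \emph{output} of the case analysis (the values land in $\{0,1,2\}$) rather than an input. The only delicate points are the periodicity-preservation during absorption, handled by the $q\Z^2$-periodicity of $\one_{x_0 + q\Z^2}$, and the consistency of simultaneous absorption on different cosets, which causes no conflict because the indicators $\{\one_{x_0 + q\Z^2}\}_{x_0 \in [q]^2}$ are mutually disjoint.
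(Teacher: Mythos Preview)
Your approach is essentially the paper's: start from Proposition~\ref{furt-struct-2}, absorb the constant combinatorial pieces into $\tilde g'$, and read off the trichotomy via Lemma~\ref{comb-case} and Corollary~\ref{moment-consequences}. There is one small slip worth flagging. Lemma~\ref{comb-case} and Corollary~\ref{moment-consequences}(i) are stated for $\varphi'_{w_0}$ that are $\{0,1\}$-valued and \emph{non-periodic} on $x_0+q\Z^2$, not merely non-constant. After your absorption step a surviving combinatorial piece is non-constant, but nothing yet rules out it being periodic; and if two such pieces $\varphi'_{w_0},\varphi'_{w_1}$ survive on the same coset, the argument in the proof of Lemma~\ref{comb-case} only shows they coincide (both equal $1-\one_{A'}$) and are therefore periodic --- it does not force either one to be constant. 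So your sentence ``By Lemma~\ref{comb-case}, at most one $w_0\in W$ can be in the non-trivial combinatorial form'' is not literally what that lemma gives. The paper closes this gap by first recording the dichotomy, \emph{then} making $q$ more divisible so that ``periodic on the coset'' becomes ``constant on the coset'', and only afterwards subtracting constants. Your argument goes through verbatim once you insert this refinement of $q$ (equivalently, rerun the absorption at the finer scale); this costs nothing since the conclusion is for all sufficiently divisible $q$ anyway.
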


\begin{proof}  We take the structured solution coming from Proposition \ref{furt-struct-2}.  For $q$ sufficiently divisible and $x_0 \in [q]^2$, we know from Lemma \ref{comb-case} and Corollary \ref{moment-consequences} that exactly one of the following two  statements holds:
\begin{itemize}
    \item[(i)]  We have $(w,x_0) \in \Gamma_{q,\cc}$ for all $w \in W$, and all but at most one of the $\varphi'_w$ are periodic on $x_0+q\Z^2$.
    \item[(ii)] We have $(w_1,x_0) \in \Gamma_{q,\e}$ for exactly three $w_1 \in W$, and $\varphi'_w$ on $x_0+q\Z^2$ is constant for all other $w \in W$.
\end{itemize}
Note that by making $q$ more divisible if needed to subdivide the cosets further, we can replace ``periodic'' by ``constant'' in (i).

If we are in case (i) with all of the $\varphi'_w$ constant, we subtract\footnote{By performing this modification, the structured solution no longer lies in the space $X_{q_0,W,\tilde g}$ used in the proof of Proposition \ref{furt-struct-2}; however, we will have no further use for this space.  Similarly for the modifications indicated in the other cases.} those constants from $\tilde g'$ on this coset to make $\varphi'_w$ zero on $x_0 + q\Z^2$ for all $w \in W$, while keeping $\tilde g'$ $q\Z^2$-periodic.  From \eqref{a-rep-ind} we now have $\one_{A'} = \tilde g'$ on $x_0+q\Z^2$, so $\tilde g'$ must equal either $0$ or $1$ on this coset; we are now in the zero case and can place $x_0$ in $\Sigma_{q,(0)}$.

If we are in case (i) with $\varphi'_{w_0}$ non-constant for some $w_0\in W$, then all other $\varphi'_w$ must be constant, and we may similarly make $\varphi'_w$ zero for all $w \neq w_0$ on this coset (modifying $\tilde g'$ as needed).  Then from \eqref{a-rep-ind}, we have $\one_{A'}=\tilde g'-\varphi'_{w_0}$, and so by the $q\Z^2$-periodicity of $\tilde g'$, the constant value that $\tilde g'$ takes on $x_0+q\Z^2$ must be equal to $1$.  Thus we are now in the combinatorial case, and can place $x_0$ in $\Sigma_{q,\cc}$.

In case (ii), we again subtract constants to make $\varphi'_w$ zero for all $w \in W$ with $(w,x_0) \in \Gamma_{q,\cc}$ (modifying $\tilde g'$ as needed), and then by \eqref{a-rep-ind} and Corollary \ref{moment-consequences} (and the $q\Z^2$-periodicity of $\tilde g'$) we have $\tilde g' = 2$ on $x_0+q\Z^2$.  The claim \eqref{relation} then follows from \eqref{equid''} and Corollary \ref{moment-consequences}(iii), so we are in the equidistributed case and can place $x_0$ in $\Sigma_{q,\e}$.
\end{proof}

\section{Higher level PTC in \texorpdfstring{$\Z^2$}{Z^2}, II: describing the normalized solution}\label{highlevel-sec}

In this section, we conclude the proof of Theorem \ref{main-3}, following the strategy described in Section \ref{strategy-sec}. Recall that the first step was established in Section \ref{cleaning-sec}; thus, in this section we establish the remaining steps.

 \subsection{Describing the irrationality of the solution}
 Observe that the indicator function structured solution $(\one_{A'}, \tilde g', W, (\varphi'_w)_{w \in W})$ provided by Proposition \ref{Struct-sol} involves the irrational parameters $\alpha_{w,q,x}, \beta_{w,q,x}$ (for $x\in \Sigma_{q,\e}$ and $w\in W_{x,q,\e}$) associated to the equidistributed case.  This makes other key functions, such as the periodic functions $(\one_{x + \langle w \rangle} f) * \varphi'_w$ arising in Theorem \ref{structure-thm-2}(ii) take irrational values as well, and will almost certainly cause this structured solution to be  non-periodic. In preparation for making these quantities rational (in order to construct a periodic solution), we now capture all the irrationality inside a single map $\Phi \in \Hom_1(\Q^{d+1},\R)$, with the irrational parameters $\alpha_{w,q,x}, \beta_{w,q,x}$ being reinterpreted as images of rational proxies $\overline{\alpha}_{w,q,x}, \overline{\beta}_{w,q,x}$ via this map.

\begin{proposition}[Describing the irrationality]\label{irrat}  Let $(\one_{A'}, \tilde g', W, (\varphi'_w)_{w \in W})$ be the indicator function structured solution in Proposition \ref{Struct-sol}.  Then, for sufficiently divisible $q$, there is an injective map $\Phi \in \Hom_1(\Q^{d+1},\R)$ for some $d \geq 0$, with the following properties:
\begin{itemize}
    \item For $x \in \Sigma_{q,\e}$ and $w \in W_{x,q,\e}$, there exist $\overline{\alpha}_{w,q,x} \in \Z^{d+1}$ and $\overline{\beta}_{w,q,x} \in \Q^{d+1}$ such that $\alpha_{w,q,x} = \Phi(\overline{\alpha}_{w,q,x})$ and $\beta_{w,q,x} = \Phi(\overline{\beta}_{w,q,x})$.  Furthermore, $\overline{\alpha}_{w,q,x}$, $\overline{\beta}_{w,q,x}$, and $e_0$ are\footnote{Recall that $\{e_0,\dots,e_d\}$ is the standard basis for $\Q^{d+1}$.} linearly independent over $\Q$.
    \item For each $w \in W$ and $x+\langle w \rangle \in \Z^2/\langle w \rangle$, there is a $q\Z^2$-periodic function $\overline{g}_{w, x+\langle w \rangle} \colon \Z^2 \to \Q^{d+1}$ such that
    \begin{equation}\label{tog}
    \Phi \circ \overline{g}_{w, x+\langle w \rangle} = (\one_{x + \langle w \rangle} f) * \varphi'_w
    \end{equation}
    and also
    \begin{equation}\label{ge0}
    g e_0 = (f * \tilde g')e_0 - \sum_{w \in W} \sum_{x + \langle w \rangle \in \Z^2/\langle w \rangle} \overline{g}_{w, x+\langle w \rangle}.
    \end{equation}
\end{itemize}
\end{proposition}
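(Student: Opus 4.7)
The plan is to package all the real parameters $\alpha_{w,q,x}, \beta_{w,q,x}$ arising in the equidistributed case of Proposition~\ref{Struct-sol} into the image of a single $\Q$-linear injection $\Phi$, and then pull the periodic convolutions $(\one_{x+\langle w\rangle}f)*\varphi'_w$ back through $\Phi^{-1}$. First I would collect the (finitely many) real numbers $\alpha_{w,q,x}, \beta_{w,q,x}$ and let $V\subseteq \R$ be their $\Q$-span together with $1$; this is a finite-dimensional $\Q$-vector space. Note that $\varphi'_w$ takes values in $V$ on every coset: this is trivial in the zero and combinatorial cases (where $\varphi'_w$ is integer-valued), and in the equidistributed case it holds because $\varphi'_w(y) = \{\alpha m + \beta\} \in \Z + \Z\alpha + \Z\beta \subseteq V$. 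Hence $(\one_{x+\langle w\rangle}f)*\varphi'_w$, being a finite $\Z$-linear combination of values of $\varphi'_w$, also takes values in $V$.

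The main technical step is to arrange the integrality condition $\overline{\alpha}_{w,q,x} \in \Z^{d+1}$, which is achieved by exploiting the freedom to make $q$ more divisible. Let $L_\alpha := \sum_{w,x} \Z \alpha_{w,q,x} \subseteq V$; then $L_\alpha \cap \Q$ is a finitely generated subgroup of $\Q$, hence cyclic, of the form $(p/q_0)\Z$ with $\gcd(p,q_0)=1$. A direct computation in the refined sub-coset $x_0 + kqw + lqw^* + Nq\Z^2$ shows that refining $q \mapsto Nq$ replaces each $\alpha$ by $N\alpha$ (and each $\beta$ by $\alpha l + \beta$), so $L_\alpha$ is replaced by $NL_\alpha$. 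Choosing $N$ divisible by $q_0$ forces the refined $L_\alpha \cap \Q \subseteq \Z$, whence $1$ is primitive in $L_0 := \Z + L_\alpha$. By the structure theorem for finitely generated abelian groups, $L_0$ then admits a $\Z$-basis of the form $\{1, t_1, \ldots, t_{d_1}\}$; I extend this to a $\Q$-basis $\{1, t_1, \ldots, t_{d_1}, u_1, \ldots, u_{d_2}\}$ of $V$ and define $\Phi \in \Hom_1(\Q^{d+1}, \R)$ (with $d = d_1 + d_2$) to be the $\Q$-linear isomorphism $\Q^{d+1} \to V$ sending $e_0 \mapsto 1$ and the remaining standard basis vectors to the $t_j$'s and $u_k$'s. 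Then $\Phi$ is injective, each $\alpha_{w,q,x} \in L_0$ gives an integer vector $\overline{\alpha}_{w,q,x} \in \Z^{d+1}$, and each $\beta_{w,q,x} \in V$ gives a rational vector $\overline{\beta}_{w,q,x} \in \Q^{d+1}$. The linear independence of $\overline{\alpha}_{w,q,x}, \overline{\beta}_{w,q,x}, e_0$ over $\Q$ transfers from the $\Q$-independence of $\alpha, \beta, 1$ in $\R$ (Proposition~\ref{furt-struct-2}(b')) via the injectivity of $\Phi$.

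For the second bullet, I would simply set $\overline{g}_{w, x+\langle w\rangle}(y) := \Phi^{-1}\bigl((\one_{x+\langle w\rangle}f)*\varphi'_w(y)\bigr) \in \Q^{d+1}$, which is well-defined because the right-hand side lies in $V = \Phi(\Q^{d+1})$; this is \eqref{tog}, and the required $q\Z^2$-periodicity follows from Theorem~\ref{structure-thm-2}(ii) for sufficiently divisible $q$. To obtain \eqref{ge0}, I convolve the representation $\one_{A'} = \tilde g' - \sum_w \varphi'_w$ with $f$ to get $g = f * \tilde g' - \sum_w \sum_{x+\langle w\rangle} (\one_{x+\langle w\rangle}f)*\varphi'_w$, and then apply $\Phi^{-1}$ termwise, using that $g$ and $f*\tilde g'$ are integer-valued so that $\Phi^{-1}(g) = g\cdot e_0$ and $\Phi^{-1}(f*\tilde g') = (f*\tilde g')\cdot e_0$. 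The main obstacle throughout is the integrality condition on $\overline{\alpha}_{w,q,x}$, which is exactly what forces the refinement of $q$ and the primitivity analysis of $1$ in $L_0$; once this piece of $\Q$-linear algebra is set up correctly, the rest of the verification is routine.
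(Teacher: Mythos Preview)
Your proposal is correct and follows essentially the same approach as the paper: both form the finite-dimensional $\Q$-vector space $V$ (called $\Gamma$ in the paper) spanned by $1$ and the $\alpha,\beta$'s, observe that all values of $\varphi'_w$ and hence of $(\one_{x+\langle w\rangle}f)*\varphi'_w$ lie in $V$, define $\Phi$ as a $\Q$-linear isomorphism $\Q^{d+1}\to V$ sending $e_0\mapsto 1$, and pull everything back through $\Phi^{-1}$. The only difference is in the integrality step: the paper fixes an arbitrary basis, notes that refining $q\mapsto kq$ keeps $\Gamma$ and $\Phi$ unchanged while replacing each $\overline\alpha$ by $k\overline\alpha$, and simply chooses $k$ to clear all denominators; you instead refine $q$ to force $L_\alpha\cap\Q\subseteq\Z$, argue that $1$ is then primitive in $L_0=\Z+L_\alpha$, and choose a basis of $V$ extending a $\Z$-basis of $L_0$. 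Both maneuvers are equally valid; the paper's is a line shorter, but your primitivity argument is a nice alternative.
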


\begin{proof}  Fix $q$ that meets the requirements of Proposition \ref{Struct-sol}. For each $w \in W$ and $x + \langle w \rangle \in \Z^2/\langle w\rangle$, set
$$\tilde g'_{w, x+\langle w \rangle} \coloneqq (\one_{x + \langle w \rangle} f) * \varphi'_w.$$
By Theorem \ref{structure-thm-2}(ii), $\tilde g'_{w,x+\langle w \rangle}$ is $q\Z^2$-periodic.  Convolving \eqref{a-rep-ind} with $f$ and using \eqref{fag-eq-ind}, we conclude that
\begin{equation}\label{gifg}
g = f * \tilde g' - \sum_{w \in W} \sum_{x + \langle w \rangle \in \Z^2/\langle w \rangle} \tilde g'_{w, x+\langle w \rangle}.
\end{equation}
Let $\Gamma$ be the $\Q$-vector subspace of $\R$ generated by $1$ and all the $\alpha_{w,q,x_0}, \beta_{w,q,x_0}$ with $x_0 \in \Sigma_{q,\e}$ and $w \in W_{x_0,q,\e}$.  Then $\Gamma$ is finitely generated and thus has a $\Q$-linear basis $1, \gamma_1,\dots,\gamma_d$ for some $d \geq 0$. From Proposition \ref{Struct-sol}, we see that the $\varphi'_w$ all take values in $\Gamma$, hence the $\tilde g'_{w, x+\langle w \rangle}$ do as well.

We then let $\Phi \colon \Q^{d+1} \to \R$ be the $\Q$-linear map with $\Phi(e_0) \coloneqq 1$ and $\Phi(e_i) \coloneqq \gamma_i$ for $i=1,\dots,d$; then $\Phi$ is injective with range $\Gamma$.  We can then uniquely write
$\alpha_{w,q,x_0} = \Phi(\overline{\alpha}_{w,q,x_0})$ and $\beta_{w,q,x_0} = \Phi(\overline{\beta}_{w,q,x_0})$ for some $\overline{\alpha}_{w,q,x_0}, \overline{\beta}_{w,q,x_0} \in \Q^{d+1}$ for all $x_0 \in \Sigma_{q,\e}$ and $w \in W_{x_0,q,\e}$, and for any $w \in W$ and $x \in \Z^2/\langle w \rangle$, write $\tilde g'_{w, x+\langle w \rangle} = \Phi \circ \overline{g}_{w, x+\langle w \rangle}$ for some $q\Z^2$-periodic $\overline{g}_{w, x+\langle w \rangle} \in \ell^\infty(\Z^2,\Q^{d+1})_\p$, which gives \eqref{tog}.  Since $\alpha_{w,q,x}, \beta_{w,q,x}, 1$ are linearly independent over $\Q$, we obtain, by construction of $\Phi$, that $\overline{\alpha}_{w,q,x}, \overline{\beta}_{w,q,x}, e_0$ are also linearly independent over $\Q$.

In the identity \eqref{gifg}, the functions $g$, $f$, and $\tilde g'$ are integer-valued, hence $f*\tilde g'$ is also, thus we can apply $\Phi$ to \eqref{gifg}:
$$
\Phi(ge_0) = \Phi\left( (f * \tilde g')e_0 - \sum_{w \in W} \sum_{x + \langle w \rangle \in \Z^2/\langle w \rangle} \overline{g}_{w, x+\langle w \rangle} \right).$$
And so, since $\Phi$ is injective, we conclude \eqref{ge0}.

To complete the proof, it remains to show that we can choose the vectors $\overline{\alpha}_{w,q,x_0}$ to be in $\Z^{d+1}$ rather than just in $\Q^{d+1}$ (which we currently have). Observe from an inspection of \eqref{relation} that we can replace $q$ with any multiple $kq$ of $q$,  by setting
$$
\alpha_{w,kq,x_0+aq} \coloneqq k \alpha_{w,q,x_0}$$
and
$$ \beta_{w,kq,x_0+aq} \coloneqq \beta_{w,q,x_0} + a \alpha_{w,q,x_0}$$
for any $x_0 \in [q]$ and $0 \leq a < k$.  This replacement does not alter the group $\Gamma$, and so we can also keep $\Phi$ unchanged, so that  $\overline{\alpha}_{w,kq,x_0} = k \overline{\alpha}_{w,q,x_0}$ for every $x_0\in [kq]^2$. Thus, by taking $q$ sufficiently divisible, we can clear all denominators, making all $\overline{\alpha}_{w,q,x_0}$ $x_0\in \Sigma_{q,\e}$, $w\in W_{x_0,q,\e}$ integer-valued.
\end{proof}

Henceforth we fix the indicator function structured solution $$(\one_{A'}, \tilde g', W, (\varphi'_w)_{w \in W})$$ provided by Proposition \ref{Struct-sol}, and fix a sufficiently divisible $q$; we will make no further modifications to $q$, and to simplify the notation we now omit $q$ from the subscripting.  We then have a partition
$$[q]^2 = \Sigma_{\cc} \cup \Sigma_{\e} \cup \Sigma_{(0)},$$
primitive vectors 
$$w_{x} \in W \text{ for } x \in \Sigma_{\cc}$$
and sets 
$$W_{x,\e}, \; x\in \Sigma_{\e},$$ and for each $w\in W_{x,\e}$, real numbers $\alpha_{w, x}, \beta_{w, x}$ and vectors $\overline{\alpha}_{w,x},\overline{\beta}_{w,x}\in \Q^{d+1}$ for a suitable $d\geq 0$, and $q\Z^2$-periodic functions $$\overline{g}_{w, x+\langle w \rangle}\colon \Z^2\to \Q^{d+1}$$ obeying all the properties of Propositions \ref{Struct-sol} and \ref{irrat} (for the indicated choice of $q$) for a suitable injective $\Q$-linear map $\Phi \in \Hom_1(\Q^{d+1}, \R)$.

\subsection{Rationalization ansatz}
The map $\Phi \in \Hom_1(\Q^{d+1},\R)$ from Proposition \ref{irrat} takes irrational values, which is one of the main reasons why the solutions to $f*a=g$ we currently possess are not periodic.  The strategy is then to use Proposition \ref{nondeg-real} to replace $\Phi$ by another map $\Phi^\Q \in \Hom_1(\Q^{d+1},\Q)$ only taking rational values, and modify the functions $\varphi'_w$ accordingly.  For this, we should first identify a locally definable predicate $P$, capturing the properties of our $\Phi$ as well as $\Phi^\Q$ that we wish to find. More precisely, suppose that we could locate a map $\Phi^\Q \in \Hom_1(\Q^{d+1},\Q)$, together with periodic functions $\varphi_w^\Q \in \ell^\infty(\Z^2,\Q)_{\p}$ that are also $qw$-periodic for each $w\in W$, obeying the indicator function condition
\begin{equation}\label{indicator-cond}
\tilde g'(x) - \sum_{w \in W} \varphi_w^\Q(x) \in \{0,1\}
\end{equation}
for all $x \in \Z^2$, as well as the sliced tiling condition
\begin{equation}\label{tog-rat}
   \Phi^\Q \circ \overline{g}_{w, x+\langle w \rangle} = (\one_{x + \langle w \rangle} f) * \varphi^\Q_w
\end{equation}
 for all $w \in W$ and $x + \langle w \rangle \in \Z^2/\langle w \rangle$. Applying $\Phi^\Q$ to \eqref{ge0}, we obtain
 $$
     g = f * \tilde g' - \sum_{w \in W} \sum_{x + \langle w \rangle \in \Z^2/\langle w \rangle} \Phi^\Q \circ \overline{g}_{w, x+\langle w \rangle}
    $$
and hence by \eqref{tog-rat} and summation in $x$
$$ g = f * \tilde g' - \sum_{w \in W} f * \varphi^\Q_w.$$
If we denote the $\{0,1\}$-valued expression in \eqref{indicator-cond} as $\one_{A'}(x)$, then by construction $\one_{A'}$ is periodic, and we now have
$$ g = f * \one_{A'}$$
giving the required periodic indicator function solution for Theorem \ref{main-3}.  Thus, to finish the proof of Theorem \ref{main-3}, it will suffice to locate a map $$\Phi^\Q \in \Hom_1(\Q^{d+1}, \Q)$$ and $qw$-periodic functions $\varphi_w^\Q \in \ell^\infty(\Z^2,\Q)_{\p}$ obeying the conditions \eqref{indicator-cond}, \eqref{tog-rat}.  For comparison, from \eqref{a-rep}, \eqref{tog} we see that the map $\Phi \in \Hom_1(\Q^{d+1},\R)$ and the functions $\varphi'_w \in \ell^\infty(\Z^2,\R)$ already obey the analogues of \eqref{indicator-cond}, \eqref{tog-rat}, but are real-valued instead of rational-valued, and furthermore the $\varphi'_w$ are not known to be periodic (although they are still $qw$-periodic).

In order to avoid the discontinuities of the fractional part operator $\{\}$ at a later stage of the argument, we will also require $\Phi^\Q$ to be \emph{non-degenerate} in the sense that for every $x_0 \in \Sigma_{\e}$ and $w \in W_{x_0,q,\e}$, the quantity $\Phi^\Q(\overline{\beta}_{w, q, x_0})$ is not an integer linear combination of $\Phi^\Q(\overline{\alpha}_{w, q, x_0})$ and $1$.  Fortunately, this condition will eventually be provided for us by Proposition \ref{nondeg-real}, together with the hypothesis that $\Phi$ is injective.

The functions $\varphi^\Q_w$ we will use will be of a particular form, defined in terms of the (non-degenerate) map $\Phi^\Q$ and some additional combinatorial data $A_{x_0}$ for $x_0 \in \Sigma_{\cc}$.  More precisely, we adopt the following ansatz for the functions $\varphi^\Q_w$ on the various cosets $x_0 + q\Z^2$, $x_0 \in [q]^2$ of $q\Z^2$:
\begin{itemize}
    \item (Combinatorial case) If $x_0 \in \Sigma_{\cc}$, then $\varphi^\Q_{w_{x_0}} = \one_{A_{x_0}}$ on $x_0 + q\Z^2$ for some $w_{x_0}\in W$ and some periodic set $A_{x_0} \subset x_0+q\Z^2$ which is also $qw_{x_0}$-periodic, and $\varphi^\Q_w$ vanishes identically  on $x_0+q\Z^2$ for all  $w \in W\backslash \{w_{x_0}\}$.
    \item (Equidistributed case) If $x_0 \in \Sigma_{\e}$, then 
    $\varphi^\Q_w$ vanishes identically on $x_0+q\Z^2$ for all $w \in W \backslash W_{x_0,\e}$, and for each $w \in W_{x_0,\e}$, $y\in x_0+q\Z^2$ we have
    $$ \varphi^\Q_{w}(y) = \left\{\Phi^\Q(\overline{\alpha}_{w, x_0}) \left(w\wedge \frac{y-x_0}{q} \right) + \Phi^\Q(\overline{\beta}_{w, x_0}) \right\}.
$$
Note that if $\Phi^\Q$ is non-degenerate, then the expression inside the fractional part is never an integer.
    \item (Zero case) If $x_0 \in \Sigma_{(0)}$, then $\varphi^\Q_w$ vanishes identically on $x_0+q\Z^2$ for all $w \in W$.
\end{itemize}
Note that the functions $\varphi^\Q_w$ will necessarily lie in $\ell^\infty(\Z^2,\Q)_{\p}$, since they vanish in the zero case, are assumed to be periodic and $\{0,1\}$-valued in the combinatorial case, and are given by the fractional part of a linear form with rational coefficients in the equidistributed case.  For similar reasons, the $\varphi^\Q_w$ will necessarily be $qw$-periodic.  For comparison, we see from Proposition \ref{irrat} that our given functions $\varphi'_w$ also obey a similar ansatz, but with $\Phi^\Q$ replaced by $\Phi$, and with $A_{x_0}$ replaced by a set $A'_{x_0} \subset x_0+q\Z^2$ which remains $qw_{x_0}$-periodic, but is no longer assumed to be periodic ($A'_{x_0}= (x_0+q\Z^2) \backslash \supp(\varphi'_{w_{x_0}})$).

Let us now record what the conditions \eqref{indicator-cond}, \eqref{tog-rat} reduce to under this ansatz.  We begin with \eqref{indicator-cond}.  If $x \in \Z^2$, then $x \in x_0 + q\Z^2$ for some $x_0 \in [q]^2$.  The ansatz ensures that the constraint \eqref{indicator-cond} is automatically satisfied in the combinatorial case  $x_0 \in \Sigma_{\cc}$ and the zero case $x_0 \in \Sigma_{(0)}$ thanks to the properties of $\tilde g'$ in these cases from Proposition \ref{Struct-sol}.  In the equidistributed case $x_0 \in \Sigma_{\e}$, since $\tilde g'=2$ on $x_0+q\Z^2$ in this case, the indicator constraint simplifies to
\begin{equation}\label{equi-con}
\sum_{w \in W_{x_0,\e}} \{\Phi^\Q(\overline{\alpha}_{w, x_0}) (w \wedge y) + \Phi^\Q(\overline{\beta}_{w, x_0})\} \in \{1,2\}
\end{equation}
for all $y \in \Z^2$.
For sake of comparison to the indicator function structured solution
$(\one_{A'}, \tilde g', W, (\varphi'_w)_{w \in W})$, by \eqref{rel-2} we have the condition
\begin{equation}\label{equi-con-compare}
\begin{split}
\sum_{w \in W_{x_0,\e}} \{\Phi(\overline{\alpha}_{w, x_0}) (w \wedge y) + \Phi(\overline{\beta}_{w, x_0})\} &=
\sum_{w \in W} \varphi'_w(x_0+qy) \\
&= \tilde g'(x_0+qy) - \one_{A'}(x_0+qy) \\
&= 2 - \one_{A'}(x_0+qy) \\
&\in \{1,2\}
\end{split}
\end{equation}
for all $y \in \Z^2$.

\subsection{Eliminating the combinatorial data}
We remark that the combinatorial data $A_{x_0}$ plays no role in the reduced indicator constraint \eqref{equi-con}, and thus only affects the sliced tiling constraint \eqref{tog-rat}.  We now perform some transformations of this constraint to eliminate the combinatorial data $A_{x_0}$ entirely.  First we observe that to verify \eqref{tog-rat}, it suffices to do so in the case where
the coset $x+\langle w \rangle$ intersects the support $F$ of $f$; by shifting $x$, we may assume that it is $x$ itself that lies in $F$.  Evaluating the constraint \eqref{tog-rat} for a given $w \in W$ and $x \in F$, and at a given location $y+x$ of $\Z^2$, we can expand the constraint as
\begin{equation}\label{constraint2Q}
    \Phi^\Q \circ \overline{g}_{w, x+\langle w \rangle}(y+x) = \sum_{h \in \Z} f(x+hw) \varphi^\Q_w(y - hw).
\end{equation} 
Using the ansatz for $\varphi^\Q_w$, we rewrite this constraint to place all terms involving the combinatorial data $A_{x_0}$ on the right-hand side, and all the terms involving the map $\Phi^\Q$ on the left-hand side:
\begin{align*}
    &\Phi^\Q \circ \overline{g}_{w, x+\langle w \rangle}(y+x) \\
    & \quad - \sum_{\stackrel{x_0 \in \Sigma_{\e}}{w \in W_{x_0,\e}}}\;
\sum_{\stackrel{h \in \Z}{y-hw \in x_0+q\Z^2}} f(x+hw)  \left\{\Phi^\Q(\overline{\alpha}_{w, q,x_0}) \left(w\wedge \frac{y-hw-x_0}{q}\right) + \Phi^\Q(\overline{\beta}_{w, q,x_0})\right\}  \\
&\quad = \sum_{\stackrel{x_0 \in \Sigma_{\cc}}{w_{x_0} = w}}\; \sum_{\stackrel{h \in \Z}{y-hw \in x_0+q\Z^2}} f(x+hw) \one_{A_{x_0}}(y - hw).
\end{align*}
We can write $y = (nq+a)w + (mq+b)w^*$ for some $a,b \in [q]$ and $n,m \in \Z$, where the constraint $y-hw \in x_0+q\Z^2$ now becomes $(a-h)w + bw^* \in x_0 + q\Z^2$, which implies on taking wedge products with $w$ that the quantity
\begin{equation}\label{vwinZ}
k_{w,b,x_0}\coloneqq \frac{b - (w \wedge x_0)}{q}\in \Z
\end{equation}
is an integer.  
Using the periodicity properties of $\overline{g}_{w,x+\langle w \rangle}$ and $A_{x_0}$, the above constraint then can be written as
\begin{align*}
S_{w,b,a,x}(\Phi^\Q, m) = \sum_{\stackrel{x_0 \in \Sigma_{\cc}}{w_{x_0} = w}}\; \sum_{\stackrel{h \in \Z}{(a-h)w+bw^* \in x_0+q\Z^2}} f(x+hw) \one_{A_{x_0}}((a-h)w + (mq+b) w^*),
\end{align*}
where  for any $\Q$-linear map $\tilde \Phi \colon \Q^{d+1} \to \R$, 
\begin{equation}\label{swb-def}
\begin{split}
S_{w,b,a,x}(\tilde \Phi, m) &\coloneqq \tilde \Phi \circ \overline{g}_{w, x+\langle w \rangle}(aw+bw^*+x) \\
&\quad - \sum_{\stackrel{x_0 \in \Sigma_{\e}}{w \in W_{x_0,\e}}}\;
\sum_{\stackrel{h \in \Z}{(a-h)w+bw^* \in x_0+q\Z^2}} f(x+hw) \times \\
&\quad \left\{\tilde \Phi(\overline{\alpha}_{w, x_0}) \left(m+k_{w,b,x_0}\right) + \tilde \Phi(\overline{\beta}_{w, x_0})\right\}.
\end{split}
\end{equation}
This constraint is required to hold for all $w \in W$, $a,b \in [q]$, $x \in F$, and $m \in \Z$ (the parameter $n$ now plays no role).  We collect the different values of $a, x$ in a tuple, and rewrite the desired constraint as
\begin{equation}\label{sliced-eq}
S_{w,b}( \Phi^\Q, m ) = T_{w,b}( (\{ n \in \Z: nw_{x_0} + (mq+b) w_{x_0}^* \in A_{x_0} \})_{x_0 \in \Sigma_{\cc}} ) 
\end{equation}
for all $w \in W$, $b \in [q]$, and $m \in \Z$, where
$$ S_{w,b}(\tilde \Phi,m ) \coloneqq (S_{w,b,a,x}(\tilde \Phi,m))_{(a,x) \in [q] \times F} \in \R^{[q] \times F}$$
and for any tuple $(B_{x_0})_{x_0 \in \Sigma_{\cc}}$ of $q$-periodic subsets $B_{x_0}$ of $\Z$, $$T_{w,b}((B_{x_0})_{x_0 \in \Sigma_{\cc}}) \in \Z^{[q] \times F}$$ denotes the tuple
$$
T_{w,b}((B_{x_0})_{x_0 \in \Sigma_{\cc}}) \coloneqq
\left( \sum_{\stackrel{x_0 \in \Sigma_{\cc}}{w_{x_0} = w}} \; \sum_{\stackrel{h \in \Z}{(a-h)w+bw^*-x \in x_0+q\Z^2}} f_{x,w}(h) \one_{B_{x_0}}(a - h)\right)_{(a,x) \in [q] \times F},$$ 
where $f_{x,w}(h)\coloneqq f(x+hw)$.
The reason for writing the constraint \eqref{constraint2Q} in the form \eqref{sliced-eq} is that it decouples the various components of the combinatorial sets $A_{x_0}$, to the point where we can eliminate the role of these sets entirely, thanks to Lemma \ref{period-extend}.  To see this, let $\Omega_{w,b} \subset \Z^{[q] \times F}$ denote the set of all tuples of the form $T_{w,b}((B_{x_0})_{x_0 \in \Sigma_{\cc}})$ for some $q$-periodic subsets $B_{x_0}$ of $\Z$.   Note that $\Omega_{w,b}$ is a finite set, since there are only $2^{q|\Sigma_{\cc}|}$ possible choices for  the $B_{x_0}$.  Then clearly, the assumption \eqref{sliced-eq} on $\Phi^{\Q}$ implies that
\begin{equation}\label{sliced-eq'} 
S_{w,b}( \Phi^\Q, m ) \in \Omega_{w,b}
\end{equation}
for all $w \in W$, $b \in [q]$, and $m \in \Z$. Conversely, suppose that \eqref{sliced-eq'} holds for all $w,b,m$.  Thus, by definition of $\Omega_{w,b}$, we can write
$$
S_{w,b}( \Phi^\Q, m ) = T_{w,b}((B_{x_0,w,b,m})_{x_0 \in \Sigma_{\cc}})$$
for some $q$-periodic sets $B_{x_0,w,b,m}$. Since $\Phi^\Q$ takes rational values, we see from \eqref{swb-def} that the tuple $S_{w,b}( \Phi^\Q, m )$ is periodic in $m$ for each $w,b$; by Lemma \ref{period-extend}, we may therefore select the $B_{x_0,w,b,m}$ to also be periodic in $m$ for each choice of $w,b,x_0$. If we  now define $A_{x_0}$ for any $x_0 \in \Sigma_{\cc}$ by the formula
$$ A_{x_0} \coloneqq \{ nw_{x_0} + (mq+b) w_{x_0}^*: b \in [q], m \in \Z, n \in B_{x_0,w_{x_0},b,m} \},$$
it is then a routine matter to check that $A_{x_0}$ is both periodic and $qw_{x_0}$-periodic, and that 
$$ T_{w,b}((B_{x_0,w,b,m})_{x_0 \in \Sigma_{\cc}}) = T_{w,b}( (\{ n: nw_{x_0} + (mq+b) w_{x_0}^* \in A_{x_0} \})_{x_0 \in \Sigma_{\cc}} ) $$
for all $w \in W$, $b \in [q]$, and $m \in \Z$, so that \eqref{sliced-eq} holds.  Thus we have reduced the sliced tiling constraint \eqref{tog-rat} to a constraint \eqref{sliced-eq'} that does not involve the combinatorial $A_{x_0}$.  For comparison, if we repeat the above analysis with $\Phi^\Q$, $\varphi^\Q_w$, $A_{x_0}$ replaced by $\Phi$, $\varphi'_w$, $A'_{x_0}$ (and \eqref{tog-rat} replaced by \eqref{tog}), we conclude that
\begin{equation}\label{swb-orig}
S_{w,b}( \Phi, m ) \in \Omega_{w,b}
\end{equation}
for all $w \in W$, $b \in [q]$, and $m \in \Z$, with the witnessing sets $B_{x_0}$ now taken to be the sets $\{n: nw_{x_0} + (mq+b)w^*_{x_0} \in A'_{x_0}\}$, which remain $q$-periodic thanks to the $qw_{x_0}$-periodicity of $A'_{x_0}$ (although here the sets $A'_{x_0}$ are not necessarily fully periodic).  

To summarize the previous discussion: the task of proving Theorem \ref{main-3} has now been reduced to locating a non-degenerate map $\Phi^\Q \in \Hom_1(\Q^{d+1},\Q)$ obeying the following two constraints:
\begin{itemize}
    \item[(a)] For any $x_0 \in \Sigma_{\e}$, we have the identity \eqref{equi-con} for all $y \in \Z^2$.
    \item[(b)] For all $w \in W$, $b \in [q]$, we have the constraint \eqref{sliced-eq'} for all $m \in \Z$.
\end{itemize}
For comparison, we see from \eqref{equi-con-compare}, \eqref{swb-orig} that we already possess an injective map $\Phi \in \Hom_1(\Q^{d+1},\R)$ that obeys both (a) and (b).

\subsection{Reducing to a finite number of constraints}

We have successfully eliminated all the combinatorial data, so that the only remaining unknown in the ansatz above is the map $\Phi^\Q \in \Hom_1(\Q^{d+1},\Q)$.  We are close to being able to apply Proposition \ref{nondeg-real}, as the conditions \eqref{equi-con}, \eqref{sliced-eq'} are (locally) definable in the language ${\mathcal L}$ of linear inequalities with rational coefficients for any \emph{fixed} choice of $y \in \Z^2$ or $m \in \Z$.  However, because the parameters $y$ and $m$ range over infinite sets, we are not quite able to invoke this proposition yet, as it requires the number of constraints to be finite.  Fortunately, we can use equidistribution theory to reduce the number of constraints from infinite to finite, as long as we avoid the discontinuities of the fractional part operator $\{\}$.

To see this, we first look at \eqref{equi-con} for a fixed $x_0 \in \Sigma_{\e}$, which is a little simpler notationally to work with\footnote{In fact, this condition is simple enough that one could in principle completely classify all solutions to this equation, but we adopt here a more abstract ``model-theoretic'' approach, as it will also work for the more complicated constraint \eqref{sliced-eq'}.}.  If we expand $w, y \in \Z^2$ out in coordinates as $w = (w_1,w_2)$, $y = (y_1,y_2)$, and also write $\overline{\alpha}_{w,x_0} = \sum_{i=0}^d \overline{\alpha}_{w,x_0,i} e_i$ for some integers $\overline{\alpha}_{w,x_0,i}$,
then the condition \eqref{equi-con} can be rewritten as
$$
F_{x_0}( (y_1 \Phi^\Q(e_i) \mod 1)_{i=0}^d, (y_2 \Phi^\Q(e_i) \mod 1)_{i=0}^d, \Phi^\Q ) \in \{1,2\}$$
for all $y_1,y_2 \in \Z$, where the piecewise continuous function $$F_{x_0} \colon (\R/\Z)^{d+1} \times (\R/\Z)^{d+1}\times \Hom_1(\Q^{d+1},\R) \to \R$$ is defined as
$$ F_{x_0}( (r_i)_{i=0}^d, (s_i)_{i=0}^d, \tilde \Phi )
\coloneqq \sum_{w \in W_{x_0,\e}} \left\{ \sum_{i=0}^d 
\tilde \Phi(\overline{\alpha}_{w, x_0,i}) (w_1 s_i - w_2 r_i) + \tilde \Phi(\overline{\beta}_{w,x_0}) \right\};$$ 
note that the right-hand side is $1$-periodic in each of the $r_i, s_i$, so $F_{x_0}$ is well-defined on the indicated domain, although it can have jump discontinuities whenever one has
\begin{equation}\label{jump}
\sum_{i=0}^d 
\tilde \Phi(\overline{\alpha}_{w, x_0,i}) (w_1 s_i - w_2 r_i) + \tilde \Phi(\overline{\beta}_{w,x_0}) = 0 \mod 1
\end{equation}
for some $w \in W_{x_0,\e}$.  Similarly, the identity \eqref{equi-con-compare} can be written as
$$
F_{x_0}( (y_1 \Phi(e_i) \mod 1)_{i=0}^d, (y_2 \Phi(e_i) \mod 1)_{i=0}^d, \Phi) \in \{1,2\}$$
for all $y_1,y_2 \in \Z$.  But as $\Phi \in \Hom_1(\Q^{d+1},\R)$ is injective, the real numbers $\Phi(e_0),\Phi(e_1),\dots,\Phi(e_d)$ are linearly independent over $\Q$.  By the equidistribution theorem, this implies that the orbit
$$ \{ ((y_1 \Phi(e_i) \mod 1)_{i=0}^d, (y_2 \Phi(e_i) \mod 1)_{i=0}^d): y_1,y_2 \in \Z \}$$
is dense in $(\R/\Z)^{d+1}\times (\R/\Z)^{d+1}$.  In particular, by taking limits, we conclude that the constraint
$$
F_{x_0}( (r_i)_{i=0}^d, (s_i)_{i=0}^d, \Phi ) \in \{1,2\}$$
holds for all $r_i, s_i \in \R/\Z$ that avoids the discontinuities \eqref{jump} of $F_{x_0}$.  To put it another way, if we let $\Lambda_{x_0} \subset \Hom_1(\Q^{d+1},\R)$ denote the set of all $\tilde \Phi \in \Hom_1(\Q^{d+1},\R)$ with the property that
\begin{equation}\label{jump-2}
F_{x_0}( (r_i)_{i=0}^d, (s_i)_{i=0}^d, \tilde \Phi) \in \{1,2\}
\end{equation}
holds whenever $r_i, s_i \in \R/\Z$ avoid the jump discontinuities \eqref{jump}, then
$$ \Phi \in \Lambda_{x_0}.$$
Conversely, if the rational map $\Phi^\Q$ is non-degenerate and is such that
\begin{equation}\label{phiq-lambda}
 \Phi^\Q \in \Lambda_{x_0},
\end{equation} 
then by reversing the above analysis we will obtain \eqref{equi-con}; the non-degeneracy hypothesis allows us to avoid the discontinuities \eqref{jump}.   

We can identify the torus $(\R/\Z)^{d+1}$ with its fundamental domain $[0,1)^{d+1}$, so that $F_{x_0}$ is now a function on $[0,1)^{d+1} \times [0,1)^{d+1} \times \Hom_1(\Q^{d+1},\R)$.  For any compact subset $K$ of $\Hom_1(\Q^{d+1},\R)$, we consider the predicate \eqref{jump-2} 
with $r_i,s_i \in [0,1)$ and $\tilde \Phi \in K$.  Under these restrictions, the left-hand side of \eqref{jump} is bounded.  Because of this, the predicate \eqref{jump-2} is expressible under these restrictions as a (moderately complicated) first-order sentence (depending on $K$) in the language ${\mathcal L}$ of linear inequalities with rational coefficients over the reals.  Using quantifier elimination in this language (see Remark \ref{lin-program}), the set $\Lambda_{x_0}$, when restricted to $K$, can be defined in $K$ by as a zeroth order sentence in ${\mathcal L}$; that is to say, $\Lambda_{x_0}$ is locally definable in $ {\mathcal L}$.

To summarize the above discussion, we have replaced the condition \eqref{equi-con}, which is the conjunction of an infinite number of linear constraints, with the condition \eqref{phiq-lambda}, which is locally definable in ${\mathcal L}$.

We now perform a similar substitution for the condition \eqref{sliced-eq'} for a given $w \in W$ and $b \in [q]$.  If the variable $m$ were restricted to a finite set, then this condition would be expressible as a finite sentence in ${\mathcal L}$; but $m$ is ranging over the infinite set $\Z$.  To ``compactify'' the role of this variable, we observe that
$$ \tilde \Phi(\overline{\alpha}_{w,x_0}) m = \sum_{i=0}^d \overline{\alpha}_{w, x_0,i} p_i(\tilde \Phi,m) \mod 1$$
for any $x_0 \in \Sigma_{\e}$, where $p_i(\tilde \Phi,m) \coloneqq \tilde \Phi(e_i) m \mod 1$.  Thus, by \eqref{swb-def}, we can write
$$ S_{w,b}(\tilde \Phi, m) = S'_{w,b}(  \tilde \Phi, (\tilde \Phi(e_i) m \mod 1)_{i=0}^d)$$
for any $\tilde \Phi \in \Hom_1(\Q^{d+1},\R)$, where $S'_{w,b} \colon \Hom_1(\Q^{d+1},\R)\times  (\R/\Z)^{d+1}  \to \R^{[q] \times F}$ is the piecewise linear function
$$ S'_{w,b}(\tilde \Phi , (p'_i)_{i=0}^d ) \coloneqq ( S'_{w,b,a,x}(\tilde \Phi, (p'_i)_{i=0}^d))_{(a,x) \in [q] \times F}
$$
and
\begin{align*}
S'_{w,b,a,x}(\tilde \Phi, (p'_i)_{i=0}^d) &\coloneqq \tilde \Phi \circ \overline{g}_{w, x+\langle w \rangle}(aw+bw^*+x) \\
&\quad - \sum_{\stackrel{x_0 \in \Sigma_{\e}}{w \in W_{x_0,\e}}}\;
\sum_{\stackrel{h \in \Z}{(a-h)w+bw^* \in x_0+q\Z^2}} f(x+hw) \times \\
&\quad \left\{\sum_{i=0}^d \overline{\alpha}_{w, x_0,i} p'_i + \tilde \Phi(\overline{\alpha}_{w, x_0}) k_{w,b,x_0} + \tilde \Phi(\overline{\beta}_{w,x_0})\right\}.
\end{align*}
Similarly to the functions $F_{x_0}$, the $S'_{w,b}$ are continuous as long as the input avoids the jump discontinuities, which occur when
\begin{equation}\label{jump-22}
\sum_{i=0}^d \overline{\alpha}_{w, x_0,i} p'_i + \tilde \Phi(\overline{\alpha}_{w,x_0}) k_{w,b,x_0} + \tilde \Phi(\overline{\beta}_{w, x_0}) = 0 \mod 1
\end{equation}
for some $x_0 \in \Sigma_{\e}$ with $w \in W_{x_0,q,\e}$.
From \eqref{swb-orig} we have
$$ S'_{w,b}( \Phi , (\Phi(e_i) m \mod 1)_{i=0}^d) \in \Omega_{w,b}$$
for all $m \in \Z$.  By the injectivity of $\Phi$ and the equidistribution theorem, we conclude that
$$ S'_{w,b}(\Phi , (p_i)_{i=0}^d) \in \Omega_{w,b}$$
for all $(p_i)_{i=0}^d \in (\R/\Z)^{d+1}$ avoiding the jump discontinuities \eqref{jump-22}.  That is to say, if we define $\Lambda'_{w,b} \subset \Hom_1(\Q^{d+1},\R)$ to be the set of all $\tilde \Phi \in \Hom_1(\Q^{d+1},\R)$ for which
$$ S'_{w,b}(\tilde \Phi , (p_i)_{i=0}^d ) \in \Omega_{w,b}$$
$(p_i)_{i=0}^d \in (\R/\Z)^{d+1}$ avoiding the jump discontinuities \eqref{jump-22}, then we have
$$ \Phi \in \Lambda'_{w,b}.$$
Conversely, if $\Phi^\Q \in \Hom_1(\Q^{d+1},\Q)$ is non-degenerate and obeys
\begin{equation}\label{phiq-lambdap}
\Phi^\Q \in \Lambda'_{w,b}
\end{equation}
then by reversing the above steps we will conclude that \eqref{sliced-eq'} holds, since the non-degeneracy hypothesis allows us to avoid the discontinuities in \eqref{jump-22}.  

As before, if we restrict $\tilde \Phi$ to lie in a compact subset $K$ of $\Hom_1(\Q^{d+1},\R)$ (and also identify $(\R/\Z)^{d+1}$ with its fundamental domain $[0,1)^{d+1}$), the property of $\tilde \Phi$ lying in $\Lambda'_{w,b}$ is a (quite complicated) first-order sentence in the language of linear inequalities over the rationals in the reals, so by quantifier elimination as before, $\Lambda'_{w,b}$ is locally definable in ${\mathcal L}$.

To summarize, we have reduced our problem to that of finding a non-degenerate map $\Phi^\Q \in \Hom_1(\Q^{d+1},\Q)$ that obeys \eqref{phiq-lambda} for all $x_0 \in W_{\e}$ and \eqref{phiq-lambdap} for all $w \in W$ and $b \in [q]$.  These conditions were already satisfied by the injective map $\Phi \in \Hom_1(\Q^{d+1},\R)$, and all of the conditions are locally definable in ${\mathcal L}$.  Theorem \ref{main-3} thus follows from Proposition \ref{nondeg-real}.


\begin{thebibliography}{999999999}

\bibitem[A23]{T23}
T. Austin, Private communication, 2023.

\bibitem[BN91]{bn}
D. Beauquier, M. Nivat, \emph{On translating one polyomino to tile the plane}, Discrete Comput. Geom. \textbf{6} (1991), no. 6, 575--592.

\bibitem[B20]{BH}
S. Bhattacharya,  \emph{Periodicity and Decidability of Tilings of $\Z^2$}, Amer. J. Math., \textbf{142}, (2020),  255--266.

\bibitem[BK37]{KB}
N. N. Bogoliubov and N. M. Krylov, \emph{La theorie generale de la mesure dans son application a l'etude de systemes dynamiques de la mecanique non-lineaire}, Annals of Mathematics, Second Series (in French), \textbf{38} (1937): 65--113.

\bibitem[CJ76]{conway-jones}
J. H. Conway, A. J. Jones, \emph{Trigonometric Diophantine equations (On vanishing sums of roots of unity)},
Acta Arith. \textbf{30} (1976), no. 3, 229--240.

\bibitem[DGGM24]{dggm}
J. de Dios, J. Greb\'ik, R. Greenfeld, J. Madrid, \emph{Periodicity and decidability of translational tilings by rational polygonal sets}, Bent Fuglede memorial volume of Expositiones Mathematicae, 2024.

\bibitem[BN89]{gbn}
D. Girault-Beauquier, M. Nivat, \emph{Tiling the plane with one tile}, \textit{Topology and category theory in computer science (Oxford, 1989), Oxford Sci. Publ., Oxford Univ. Press, New York, 1991, 291--333.}

\bibitem[GGRT23]{ggrt}
J. Greb\'ik, R. Greenfeld, V. Rozho\v{n},  T. Tao,  \emph{Measurable tilings by Abelian group actions, International Mathematics Research Notices}, Volume 2023, Issue 23, December 2023, Pages 20211--20251.

\bibitem[GK24]{gk}
R. Greenfeld, M. Kolountzakis, \emph{Tiling, spectrality and aperiodicity of connected sets},  Israel Journal of Mathematics.

 \bibitem[GT21]{GT21}
R. Greenfeld, T. Tao, \emph{The structure of translational tilings in $\Z^d$}, Discrete Anal. 2021, Paper No. 16, 28 pp.

 \bibitem[GT23]{GT23}
R. Greenfeld, T. Tao, \emph{Undecidable translational tilings with only two tiles, or one non Abelian tile}, Discrete \& Computational Geometry \textbf{70} (4), 1652--1706 (2023).

\bibitem[GT24]{GT22}
R. Greenfeld, T. Tao, \emph{A counterexample to the periodic tiling conjecture}, Annals of Math., \textbf{200}, Issue 1 (2024).

\bibitem[GT25]{GT25}
R. Greenfeld, T. Tao, \emph{Undecidability of translational monotilings}, arXiv:2309.09504.



\bibitem[KS15]{KS}
J. Kari, M. Szabados, \emph{An Algebraic Geometric Approach to Nivat’s
Conjecture}, In Automata, Languages, and Programming - 42nd International Colloquium, ICALP 2015, Kyoto, Japan, July 6-10, 2015, Proceedings, Part II,
pages 273--285, 2015.


\bibitem[KS20]{KS20}
J. Kari, M. Szabados,
\emph{An algebraic geometric approach to Nivat's conjecture}, Information and Computation,
\textbf{271} (2020), 104481, ISSN 0890-5401.

\bibitem[K92]{ken}
R. Kenyon, Rigidity of planar tilings, \textit{Invent. Math., \textbf{107} (1992), 637--651.}

\bibitem[K-err]{err}
R. Kenyon, \emph{Erratum: ``Rigidity of planar tilings''}, Invent. Math., \textbf{112} (1993), 223.

\bibitem[LW96]{LW}
J. C. Lagarias, Y. Wang, \emph{Tiling the line with translates of one tile}, Invent. Math. \textbf{124} (1996), no. 1-3, 341--365.

\bibitem[LL00]{LL}
T.Y Lam, K.H Leung, \emph{On Vanishing Sums of Roots of Unity},
Journal of Algebra \textbf{224} 1 (2000), 91--109.

\bibitem[LM91]{LM}
H. Leptin, D. M\"{u}ller, 
\emph{Uniform partitions of unity on locally compact groups}, Adv. Math. \textbf{90} (1991), no. 1, 1--14.

\bibitem[M65]{mann}
H. B. Mann, \emph{On linear relations between roots of unity}, Mathematika \textbf{12} (1965) 107--117.

\bibitem[M80]{M}
P. McMullen, \emph{Convex bodies which tile space by translations}, Mathematika \textbf{27} (1980), 113--121.

\bibitem[MSS22]{bgu}
T. Meyerovitch, S.  Sanadhya, Y. Solomon,
 \emph{A note on reduction of tiling problems}, Isr. J. Math. (2025).


\bibitem[N77]{N}
D. J. Newman, \emph{Tesselation of integers}, J. Number Theory \textbf{9} (1977), no. 1, 107--111.

\bibitem[N97]{Nivat}
 M. Nivat, Invited talk at the International Colloquium on Automata, Languages and Programming (ICALP), Bologna, 1997.
 

 \bibitem[PR95]{PR}
 B. Poonen, M. O. Rubinstein. \emph{The Number of Intersection Points Made by the Diagonals of a Regular Polygon}, SIAM J. Discret. Math. 11 (1995): 135--156.

\bibitem[St08]{Steinberger}
J. P. Steinberger, \emph{Minimal vanishing sums of roots of unity with large coefficients},
Proc. Lond. Math. Soc. \textbf{97} (2008), no. 3, 689--717.

 \bibitem[S98]{szegedy}
M. Szegedy, \emph{Algorithms to tile the infinite grid with finite clusters}, Proceedings of the 39th Annual Symposium on Foundations of Computer Science (FOCS ’98), IEEE Computer Society, Los Alamitos, CA, (1998),  137--145.

\bibitem[S55]{Szmielew}
A. Szmielew, \emph{Elementary properties of Abelian groups}, Fund Math, 41 (1955), 203--271.


 \bibitem[T95]{tijdeman}
 R. Tijdeman, \emph{Decomposition of the integers as a direct sum of two subsets}, Number theory (Paris, 1992--1993), 261--276, London Math. Soc. Lecture Note Ser., 215, Cambridge Univ. Press, Cambridge, 1995.

 \bibitem[V54]{V}
B. A. Venkov, \emph{On a class of Euclidean polyhedra}, Vestnik Leningrad Univ. Ser. Math. Fiz. Him. \textbf{9} (1954), 11--31.

\bibitem[W63]{wang}
H. Wang, \emph{Dominoes and the $\forall\exists\forall$ case of the decision problem}, Mathematical Theory of
Automata pp. 23--55 (1963).

\end{thebibliography}
\end{document}